\newtheorem{assumption}{Assumption}
\journalname{}
\begin{document}

\title{Motion planning and stabilization of nonholonomic systems using gradient flow approximations}

\titlerunning{Motion planning and stabilization of nonholonomic systems}        

\author{Victoria Grushkovskaya \and Alexander Zuyev}


\institute{V. Grushkovskaya \at
             University of Klagenfurt, Universitätsstraße 65-67
9020 Klagenfurt am Wörthersee, Austria
\\ and
\\
Insitute of Applied Mathematics and Mechanics, National Academy of Sciences of Ukraine (IAMM NASU) \\
              \email{viktoriia.grushkovska@aau.at}           
           \and
           A. Zuyev (Corresponding author) \at
              Otto von Guericke University Magdeburg, Universitätsplatz~2, 39106 Magdeburg, Germany
              \\
              and\\
              IAMM NASU\\
              \email{zuyev@mpi-magdeburg.mpg.de}
}

\date{Received: \today}

\maketitle

\begin{abstract}
Nonlinear control-affine systems with time-varying vector fields are considered in the paper. We propose a unified control design scheme with oscillating inputs for solving the trajectory tracking and stabilization problems.
This methodology is based on the approximation of a gradient like dynamics by trajectories of the designed closed-loop system.
As an intermediate outcome, we characterize the asymptotic behavior of solutions of the considered class of nonlinear control systems with oscillating inputs under rather general assumptions on the generating potential function.
These results are applied to examples of nonholonomic trajectory tracking and obstacle avoidance.
\keywords{Nonholonomic control system \and Oscillating controls \and Bracket-generating condition \and Lyapunov's direct method \and Stabilization \and Motion planning}
\end{abstract}

\section{Introduction}
\label{sec_intro}

Consider a nonlinear control system
\begin{equation}
\dot x = \sum_{k=1}^m f_k (x,t)u_k, 
\label{Sigma}
\end{equation}
where $x=(x_1,...,x_n)^\top\in D\subseteq\mathbb R^n$ is the state, $u=(u_1,...,u_m)^\top\in\mathbb R^m$ is the control,
$D$ is a domain, and the time dependent vector fields $f_k:D\times {\mathbb R}^+\to {\mathbb R}^n$ are regular enough to guarantee the existence and uniqueness of solutions to the Cauchy problem for system~\eqref{Sigma} with any initial data $x(t_0)=x^0\in D$, $t_0\ge 0$, and any admissible control $u:[t_0,+\infty)\to {\mathbb R}^m$. We will formulate the required regularity assumptions precisely below.

The driftless control-affine system~\eqref{Sigma} is an extremely important mathematical model in nonholonomic mechanics,
which represents the kinematics with non-integrable constraints in the case $m<n$ (we refer to the book~\cite{Bloch15} for general reference).
Of special interest is the class of systems with time independent vector fields:
\begin{equation}
\dot x = \sum_{k=1}^m \tilde f_k (x)u_k,\quad x\in D\subseteq\mathbb R^n,\; u\in\mathbb R^m,\; \tilde f_k\in C^1(D;{\mathbb R}^n).
\label{Sigma_a}
\end{equation}
In contrast to linear control theory, the controllability of system~\eqref{Sigma_a} does not imply its stabilizability by a
regular feedback law of the form $u=h(x)$.
A~famous example of a completely controllable system~\eqref{Sigma_a} with $n=3$ and $m=2$, which is not stabilizable in the classical sense, was presented in~\cite{Bro83}.
Since then, the stabilization and motion planning problems of nonholonomic systems have been extensively studied by many experts in nonlinear control theory, mechanics, and robotics. A survey of essential contributions in this area is performed in Section~\ref{sec_related}.

To the best of our knowledge, the present paper contains the first description of a unified control design method for solving a variety of different control problems such as: stabilization of an equilibrium point $x=x^*$, tracking an arbitrary curve in the state space, and motion planning with obstacles for rather general non-autonomous systems~\eqref{Sigma}.
The main idea behind our construction 
is to design time dependent feedback controllers in such a way that the trajectories of the corresponding closed-loop system approximate the trajectories of a gradient-like system of the form
\begin{equation}\label{sys_grad}
  \dot {x}=-\gamma\frac{\partial}{\partial x}P( x,t),\quad  x\in\mathbb R^n,
\end{equation}
where the {potential} function $P(x,t)$ and gain $\gamma>0$ are to be defined according to the specific problem statement.
The key contribution of our work is twofold:
\begin{itemize}
  \item a unified approach for solving the stabilization and motion planning problems
  under the bracket-generating condition;
  \item relaxed regularity assumptions on the vector fields and their directional derivatives. In particular, vector fields of the considered class of systems are not required to be smooth.
\end{itemize}

The subsequent presentation is organized as follows.
The outcomes of the literature study are reported in Section~\ref{sec_related}.
A family of $\varepsilon$-periodic feedback controllers is introduced in Section~\ref{sec_unified} in the form of trigonometric polynomials with respect to time with coefficients depending on the system state.
It is shown in Subsection~\ref{sec_basic} that the proposed controllers allow approximating the reference gradient flow dynamics by the trajectories of system~\eqref{Sigma} with arbitrary accuracy under a suitable choice of the small parameter $\varepsilon$.
These approximation schemes are then adapted to derive stabilizing controllers for the equilibrium stabilization problem (Theorem~\ref{thm_step1stab} and its corollary in Subsection~\ref{sec_stab}), tracking problem (Theorem~\ref{thm_step1track} and its corollary in Subsection~\ref{sec_tracking}), and obstacle avoidance (Subsection~\ref{sec_obstacle}).
We illustrate the proposed control design methodology with examples in Section~\ref{sec_exmpls1}.
Finally, concluding comments are given in Section~\ref{sec_concl} to summarize the key results of the present paper and underline its contribution with respect to the previous work.
The proofs of the main results are contained in Appendices~A--D.

\subsection{Notations}

Throughout the text, we will use the following notations:

$\mathbb R^+$ -- the set of nonnegative real numbers;

$\mathbb R_{>0}$ -- the set of positive real numbers;


$\delta_{ij}$ -- the Kronecker delta: $\delta_{ii}{=}1$ and $\delta_{ij}{=}0$ whenever $i\ne j$;


${\rm dist}(x,S)$ -- the Euclidian distance between a point $x\in\mathbb R^{n}$ and a set

$B_\delta(x^*)$ -- $\delta$-neighborhood of an $x^*\in \mathbb R^n$ with $\delta>0$;

$B_\delta(S)=\cup_{x\in S} B_\delta(x)$ -- $\delta$-neighborhood of a set $S\subset\mathbb R^{n}$;

$\partial M$, $\overline M$ -- the boundary and the closure of a set $M\subset\mathbb R^n$, respectively; $\overline M= M\cup \partial M$;

$|S|$ -- the cardinality of a set $S$;

$\mathcal K$ --  the class of  continuous strictly increasing functions $\varphi:\mathbb R^+\to\mathbb R^+$ such that $\varphi(0)=0$;

$\nabla_xP(x^0,y^0)=\left.\frac{\partial P(x,y)}{\partial x}\right|_{x=x^0,y=y^0}$

$[f,g](x)$ -- the Lie bracket of vector fields $f,g:\mathbb R^n\to\mathbb R^n $ at a point $x\in\mathbb R^n$,   $[f,g](x)=L_fg(x)- L_gf(x)$, where  $ L_gf(x)=\lim\limits_{s\to0}\tfrac{f(x+sg(x))-f(x)}{s}$. In the case of differentiable $f$, $L_gf(x)=\frac{\partial f(x)}{\partial x}g(x)$.

\section{Related work}
\label{sec_related}

A number of efficient control design methods have been  developed in the literature with the emphasis on {\em special classes} of nonlinear systems, such as flat systems~\cite{Fliess95},
chained-form systems~\cite{Teel95,Li17}, unicycle- and car-like systems~\cite{Qu04,Mas07,De10,Sav15,Kap17}, manipulator models~\cite{Mac85,Gal04}, Chaplygin systems~\cite{Rey93}, etc.

For planning the motion of {\em general nonholonomic systems}, a  broad class of approaches is based on the application of Lie-algebraic techniques.
With this respect, an essential assumption is that the vector fields of system~\eqref{Sigma_a} together with their iterated Lie brackets span the whole tangent space at each point of the state manifold (H\"ormander's condition).
Several authors used this assumption to produce time-periodic control laws such that the trajectories of a nonholonomic system approximate the trajectories of an extended system.
 The papers~\cite{Sus91,Liu97}   exploited an unbounded sequence
of oscillating controls with unbounded frequencies for such an approximation in case of driftless systems.
The paper~\cite{KJ88} addresses the limit behavior of solutions of a control-affine system with input signals of magnitude $\varepsilon^{-\alpha}$ and frequency scaling $1/\varepsilon$ as $\varepsilon\to 0$. It is assumed that the primitives of input signals and their iterated primitives up to a certain order are bounded. Then it is shown that the limit behavior of the considered oscillating system is either defined by its drift term or by a linear combination of certain iterated Lie brackets, depending on the value of $\alpha$.
 In the paper~\cite{Bom13}, the averaged system as a differential inclusion is constructed for driftless control-affine systems with fast oscillating inputs. It is proved that an arbitrary solution of such a differential inclusion can be approximated by a family of solution of the original system when the oscillation frequency tends to infinity. This approximation result is also extended to the class of systems with drift under a time reparametrization and the assumption that the drift generates periodic dynamics.
 %
%
An overview of motion planning methods for nonholonomic systems is presented in the book~\cite{Jean14}. For nilpotent systems, exact solutions to the motion planning problem are proposed with the use of sinusoidal inputs. In general case, the local steering problem can be solved by constructing a nilpotent approximation under a suitable choice of privileged coordinates. Then the global steering algorithm is summarized in~\cite{Jean14} as a finite sequence of steps which steers the given nonholonomic system to an arbitrary small neighborhood of the target point.
The nilpotentization of a wheeled mobile robot model with a trailer is proposed in the paper~\cite{ARC21} for planing local maneuvers of this kinematic system. On the basis of solving the related sub-Riemannian problem, an algorithm for suboptimal parking has been implemented and tested for several robot configurations.

An algorithm for motion planning of kinematic models of nonholonomic systems in task-space is developed in~\cite{AS21} with the use of the Campbell--Baker--Hausdorff--Dynkin formula. The motion planning in task-space is treated in the sense of steering the system output to a neighborhood of the desired point. The proposed algorithm is illustrated with a unicycle and kinematic car examples.
A nonholonomic snake-like robot model with $m$ ($m \ge 3$) rigid links is considered in~\cite{ND21}.
The motion planning problem is treated there in the sense of generating a gait such that the origin of the snake's body moves along a given planar curve. This problem is solved by expressing the body velocity from the compatibility equation and reconstruction equation.

An interesting example of nonholonomic system with the growth vector (4,7) is studied in~\cite{JDCS20}. Such an example is a modification of the trident snake robot with three 1-link branches of variable length. A nilpotent approximation of this system is constructed, and the local optimal steering problem is analyzed by the Pontryagin maximum principle.
Controls for generating the motion in the direction of higher order Lie brackets were proposed in~\cite{Gau14,Gau15} for systems with two inputs.

A hybrid path planning method based on the combination of a high-level planner with a low-level controller performing in autonomous vehicle is described in~\cite{JIRS19}. The high-level planner ($D^*$ Lite planner) works on the discretized 2D workspace to produce a reference path such that at each step the robot model moves from a given cell to one of the eight neighboring cell which does not have an obstacle. The output of the high-level planner is collected as a set of waypoints ending at the goal, and the cost is the total length of the path. Then the low-level controller, running on the autonomous vehicle, provides control inputs to generate motion from the current state to the next waypoint. This path planning method is experimentally validated on a differential drive robot in rough terrain environments.

Stabilizing time-varying controls  were proposed in~\cite{ZuSIAM} for second degree nonholonomic systems (following the terminology used in~\cite{Laumond96}).
Unlike other publication in this area, the exponential convergence to the equilibrium was proved without the assumption that the frequencies of controls tend to infinity.
Besides, the paper~\cite{ZuSIAM}  presented a rigorous solvability analysis of the stabilization problem in the proposed class of controls.
For detailed reviews of other motion planning and stabilizing strategies we refer to~\cite{Kolman95,Bloch15,Hoy15}.
It has to be emphasized that, in spite of a large number of publications on nonholonomic motion planning, only particular results are available for the stabilization together with the obstacle avoidance.
 Even for static obstacles,  this problem was studied only for specific systems (see, e.g.,~\cite{Kod90,Rim92,Va12}).
 A general class of nonholonomic systems was considered in the paper~\cite{Ura17}, where a time-independent controller was constructed based on the gradient of a  potential function.
Note that such a result ensures only stability (but not asymptotic stability) property.
An algorithm  computing time-periodic feedback controls for approximating collision-free paths was presented in~\cite{Gur93},
however, no solvability issues  concerning the general collision avoidance problem have  been addressed in that paper.

For a class of driftless control-affine systems, the trajectory tracking problem was addressed in~\cite{Walsh94} under the assumption that the  target trajectory is feasible, i.e. satisfies the dynamical equations with some control inputs.
However,  to the best of our knowledge, there are no results available for the stabilization of  general classes of nonlinear control systems in a neighborhood of non-feasible curves or in domains with obstacles.


\section{Unified control framework for second degree nonholonomic systems}
\label{sec_unified}

In this section, we present the main idea of our control design scheme by considering the nonholonomic systems of degree 2, according to the classification of~\cite{Laumond96}.
The proposed control design provides a generic approach for stabilization and motion planning of underactuated driftless control-affine systems.

\subsection{Definitions and assumptions}\label{subs_defs}

To generate stabilizing control strategies, we will exploit sampling, similar to the approaches of~\cite{Clar97,ZuSIAM}.
With this respect, we introduce the following definition.
\begin{definition}[$\pi_\varepsilon$-solution]~\label{def_pi}
Consider a control system
$$
\dot x=f(x,u,t),\quad x\in D\subseteq \mathbb R^n,\,u\in\mathbb R^m,\,t\in\mathbb R,\,f:D\times\mathbb R^m\times\mathbb R\to\mathbb R^n,
$$
and assume  that
a feedback control is given in the form $u=h(a(x,t),t)$, $a:D\times\mathbb R\to\mathbb R^l$, $h: \mathbb R^l\times\mathbb R\to\mathbb R^m$.
For given $t_0\in\mathbb R$ and $\varepsilon>0$,  define a partition $\pi_\varepsilon$ of $[t_0,+\infty)$ into the intervals
$$
I_j=[t_j,t_{j+1}),\;t_j=t_0+\varepsilon j, \quad j=0,1,2,\dots \, .
$$
 A $\pi_\varepsilon$-solution of the considered closed-loop corresponding to
 the initial value $x^0\in\mathbb R^n$ is an absolutely continuous function  $x_\pi(t)\in D$, defined for $t\in[t_0,+\infty)$, which satisfies the initial condition  $x_\pi(t_0)=x^0$ and the differential equations
$$
\dot x_\pi(t)=f\big(x_\pi(t), h( a(x_\pi(t_j),t_j) ,t)  , t\big), \quad t\in I_j,\; \text{for each}\; j=0,1,2,\dots \, .
$$
\end{definition}


We will illustrate the relation between $\pi_\varepsilon$-solutions and classical solutions with examples in Section~\ref{sec_exmpls1}.



Before formulating basic results of this paper, we introduce the main assumptions on the state space $D$, vector fields $f_k$, and the potential function $P$ used in the gradient flow dynamics~\eqref{sys_grad}.

\begin{assumption}\label{as_f}
\quad\\
  The vector fields $f_k(x,t):D\times\mathbb R^+\to \mathbb R^n$ are twice continuously differentiable w.r.t. $x$, and $f_k$, $L_{f_{j}}f_k$ are continuously differentiable w.r.t. $t$, for all $j,k=\overline{1,m}$.

  Moreover, for any family of compact subsets $\widetilde{\mathcal D}_t\subset  D$, $t\ge 0$, there exist constants $M_f,L_{fx},L_{2f}>0$, $L_{ft},H_{fx},H_{ft}\ge0$ such that
  \begin{itemize}
    \item [\ref{as_f}.1)] $\|f_k(x,t)\|\le M_f$,
    \item [\ref{as_f}.2)] $\|f_k(x,t)-f_k(y,t)\|\le L_{fx}\|x-y\|,\,\Big\|\frac{\partial f_k(x,t)}{\partial t}\Big\|\le L_{ft},\,\|L_{f_{j}}f_k(x,t)\|\le L_{2f}$,
    \item [\ref{as_f}.3)]$\|L_{f_l}L_{f_{j}}f_k(x,t)\|\le H_{fx},\,\Big\|\frac{\partial (L_{f_{j}}f_k(x,t))}{\partial t}\Big\|\le H_{ft}$,
  \end{itemize}
  for all $t\ge0$, $x,y\in \widetilde{\mathcal D}_t$, $j,k,l=\overline{1,m}$.
\end{assumption}

Another important assumption is related to the controllability property of system~\eqref{Sigma}. As it has already been mentioned, in this section we focus on systems with the degree of nonholonomy 2, i.e. those whose vector fields together with their Lie brackets span the whole $n$-dimensional space.
\begin{assumption}\label{as_rank}
 \quad\\
  \begin{itemize}
    \item [\ref{as_rank}.1)] System~\eqref{Sigma} satisfies the bracket-generating condition of degree 2 in $D$, i.e. there exist sets of indices $S_1\subseteq \{1,2,...,m\}$,  $S_2\subseteq \{1,2,...,m\}^2$ such that $|S_1|+|S_2|=n$ and
\begin{equation}\label{rank}
\begin{aligned}
 {\rm span}\big\{f_{i}(x,t), [f_{j_1},f_{j_2}](x,t)\,|\,i\in S_1,(j_1,j_2)\in S_2\big\}=\mathbb{R}^n\,\text{ for all }t\ge0,x\in D.
\end{aligned}
\end{equation}
    \item [\ref{as_rank}.2)] For any family of compact subsets $\widetilde{\mathcal D}_t \subset  D$, ${t\ge0}$, there exists an ${M_F}>0$ such that
  $$
  \begin{aligned}
    \| \mathcal F^{-1}(x,t)\|\le {M_F}\text{ for all }t\ge0,\,x\in \widetilde{\mathcal D}_t,
  \end{aligned}
  $$
  where $\mathcal F^{-1}(x,t)$ is the inverse matrix for
  \begin{equation}\label{Fmatrix}
  \mathcal F(x,t)= \Big(\big(f_{j_1}(x,t)\big)_{j_1\in S_1}\ \ \big([f_{j_1},f_{j_2}](x,t)\big)_{(j_1,j_2)\in S_2}\Big).
\end{equation}
  \end{itemize}

\end{assumption}
It is important to note that the rank condition~\eqref{rank} implies nonsingularity of  the  $n\times n$ matrix   $\mathcal F(x,t)$ for all $t\ge0$, $x\in D$.

The next two assumptions describe properties of the potential function $P$ for the gradient-like system~\eqref{sys_grad}. 
\begin{assumption}\label{as_Pbounds}
\quad\\
The function $P: D\times\mathbb R^+ \to \mathbb R$ is twice continuously differentiable w.r.t.~$x$.
Moreover, for any family of compact subsets $\widetilde{\mathcal D}_t\subset  D$, $t\ge 0$, there exist constants $m_P\in\mathbb R$, $L_{Px}>0$, $L_{2Px},L_{2Pt},L_{Pt},H_{Px}\ge0$ such that
\begin{itemize}
  \item [\ref{as_Pbounds}.1)] $m_P\le P(x,t)$,
  \item [\ref{as_Pbounds}.2)] $\left\|\frac{\partial P(x,t)}{\partial x}\right\|\le L_{Px},\,\|P(x,t)-P(y,\tau)\|\le L_{Px}\|x-y\|+L_{Pt}\|t-\tau\|$,
  \item [\ref{as_Pbounds}.3)] $\|\nabla_xP(x,t)-\nabla_xP(y,\tau)\|\le L_{2Px}\|x-y\|+L_{2Pt}\|t-\tau\|$,
  \item [\ref{as_Pbounds}.4)] $\sum\limits_{i,j=1}^n\left\|\dfrac{\partial^2 P(x,t)}{\partial x_i\partial x_j}\right\|\le  H_{Px}$,
\end{itemize}
for all $t,\tau\ge0$, $x\in \widetilde{\mathcal D}_t$, $y\in \widetilde{\mathcal D}_\tau$.
\end{assumption}

To formulate the last assumption of this section, we introduce families of level sets for a function $P(x,t)$. Namely, given a constant $c\in\mathbb R$, we denote
$$
\mathcal L_t^{P,c} = \{x\in\mathbb D:P(x,t)\le c\},\;
{\mathcal L}_t^{\nabla P,c} = \{x\in\mathbb D:\|\nabla_xP(x,t)\|\le c\}\; \text{for}\; t\ge 0.
$$
\begin{assumption}\label{as_Psets}
\quad

For every  $x^0\in D$,  there exist $\lambda>0$ and $\rho>0$   such that,  for all
$t\ge t_0\ge 0$, $
 \mathcal L_t^{P,P(x^0,t_0)+\lambda}$ is
 non-empty, compact, convex set, and
 $$
   \mathcal L_t^{\nabla P,\rho}\subseteq  \mathcal L_t^{P,P(x^0,t_0)+\lambda}\subset  D.
  $$

\end{assumption}

\subsection{Convergence results}
\label{sec_basic}

Below we propose a universal control strategy which ensures the convergence of the trajectories of system~\eqref{Sigma} to the set of extremum points of a given function~$P$.
Suppose that the index sets $S_1$, $S_2$ and the matrix ${\cal F}(x,t)$ are described in Assumption~\ref{as_rank},
then we parameterize the controls as
\begin{equation}\label{cont}
\begin{aligned}
u_k= u^\varepsilon_k(a(x,t),t)=&\sum_{i\in S_1} a_{i}(x,t)\delta_{ki} \\
&+\varepsilon^{-\tfrac{1}{2}}\sum_{(j_1,j_2)\in S_2} \sqrt{|a_{j_1j_2}(x,t)|}\phi^{(k,\varepsilon)}_{j_1j_2}(t),\, k = \overline{1,m}.
\end{aligned}
  \end{equation}
Here the column vector $a(x,t)=\big(a_{i_1}(x,t)\big|_{i_1\in S_1}, a_{j_1j_2}(x,t)\big|_{(j_1,j_2)\in S_2}\big)^\top\in\mathbb R^n$ is obtained from
\begin{align}
a(x,t)&=- \gamma \mathcal F^{-1}(x,t) \nabla_x P(x,t)\label{a1},
\end{align}
and the oscillating components are
\begin{equation}
 \phi^{(k,\varepsilon)}_{j_1j_2}(t)=2\sqrt{\pi \kappa_{j_1j_2}}\Big(\delta_{kj_1}{\rm sign}(a_{j_1,j_2}(x,t))\cos{\frac{2\pi \kappa_{j_1j_2}}{\varepsilon}}t
 +\delta_{kj_2}\sin{\frac{2\pi \kappa_{j_1j_2}}{\varepsilon}}t\Big),
 \label{dithers}
\end{equation}
where $\kappa_{j_1j_2}\in\mathbb N$ are pairwise distinct numbers,
$\gamma>0$ is a control gain, and $\varepsilon>0$ is a small parameter.

The first result of this section is as follows.

  \begin{lemma}~\label{lem_step1}
Let Assumptions~\ref{as_f}--\ref{as_Psets} be satisfied for system~\eqref{Sigma} with a function $P(x,t)$.
   Then there exist a $\bar\gamma>0$ and $\bar\varepsilon:[\bar \gamma,+\infty)\to {\mathbb R}_{>0}$ such that, for any  $\gamma\ge \bar\gamma$ and any $\varepsilon\in(0,\bar\varepsilon(\gamma)]$, the $\pi_\varepsilon$-solution $x_\pi(t)$ of system~\eqref{Sigma} with the controls $u_k=u_k^\varepsilon(a(x,t),t)$ given by~\eqref{cont}--\eqref{dithers}
   and the initial data  $x_\pi(t_0)=x^0\in D$, $t_0\ge 0$ is well-defined and $x_\pi(t)\in \mathcal L_t^{P,P(x^0,t_0)+\lambda}$ for all $t\ge t_0$, and there exists a $T\ge 0$ such that
  $$
  P(x_\pi(t),t)\le \sup_{t\ge t_0+ T} \sup\limits_{\xi\in \mathcal L^{\nabla P,\rho}_{t}}P(\xi,t)\text{ for all }t\ge t_0+T,
  $$ 
  where  $\lambda$, $\rho$ are positive numbers from Assumption~\ref{as_Psets}.
    \end{lemma}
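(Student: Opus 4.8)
The plan is to combine an averaging-type analysis with Lyapunov's direct method applied to $P(x,t)$ along the $\pi_\varepsilon$-solution. First I would fix an initial point $x^0\in D$, take the corresponding $\lambda,\rho$ from Assumption~\ref{as_Psets}, and set $\widetilde{\mathcal D}_t = \mathcal L_t^{P,P(x^0,t_0)+\lambda}$; by Assumption~\ref{as_Psets} this is a compact convex family contained in $D$, so all the bounds in Assumptions~\ref{as_f}, \ref{as_rank}.2, \ref{as_Pbounds} hold with uniform constants $M_f,L_{fx},\dots,M_F,L_{Px},\dots$ on these sets. The core estimate is the behavior of $P$ over one sampling interval $I_j=[t_j,t_{j+1})$ of length $\varepsilon$. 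On $I_j$ the control is $u=h(a(x_\pi(t_j),t_j),t)$ with $a(\cdot)$ frozen at the left endpoint, so I would Taylor-expand $x_\pi(t_j+\varepsilon)$ in powers of $\varepsilon^{1/2}$ using the Chen--Fliess / Magnus-type expansion. The $\varepsilon^{-1/2}$-scaled oscillatory terms $\phi^{(k,\varepsilon)}_{j_1j_2}$ are chosen exactly so that the first-order ($\varepsilon^{1/2}$) contributions average out over the period (the $\cos$/$\sin$ integrate to zero), while the second-order Lie-bracket terms $[f_{j_1},f_{j_2}]$ survive with coefficient proportional to $a_{j_1j_2}(x_\pi(t_j),t_j)$; together with the non-oscillatory drift terms $\sum_{i\in S_1}a_i f_i$ this reconstructs $\mathcal F(x,t)a(x,t)\varepsilon = -\gamma\varepsilon\,\nabla_x P(x_\pi(t_j),t_j)$ up to $O(\varepsilon^{3/2})$ remainder (and an $O(\varepsilon^2)$ contribution from the explicit $t$-dependence of $f_k$ and from freezing $a$). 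This is the step that consumes most of the work and is deferred to the appendix; the bookkeeping of which bracket orders appear at which power of $\varepsilon$, and verifying that cross terms between distinct frequencies $\kappa_{j_1j_2}$ vanish by the pairwise-distinctness assumption, is the main obstacle.

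Granting the one-step estimate $x_\pi(t_{j+1}) = x_\pi(t_j) - \gamma\varepsilon\,\nabla_x P(x_\pi(t_j),t_j) + r_j$ with $\|r_j\|\le C\varepsilon^{3/2}$ (uniformly in $j$ as long as $x_\pi(t_j)$ stays in $\widetilde{\mathcal D}_{t_j}$ and $\gamma$ ranges over a bounded set — the constant $C$ depends on $\gamma$, which is why $\bar\varepsilon$ is a function of $\gamma$), I would then estimate the increment of the Lyapunov function. Writing $\Delta_j P = P(x_\pi(t_{j+1}),t_{j+1}) - P(x_\pi(t_j),t_j)$ and using Assumption~\ref{as_Pbounds}.2--4 (Lipschitz gradient, bounded Hessian, Lipschitz-in-$t$), a second-order expansion gives
\begin{equation}
\Delta_j P \le -\gamma\varepsilon\|\nabla_x P(x_\pi(t_j),t_j)\|^2 + L_{Pt}\varepsilon + \tfrac12 H_{Px}\gamma^2\varepsilon^2 \|\nabla_x P\|^2 + L_{Px} C\varepsilon^{3/2} + (\text{lower order}).\nonumber
\end{equation}
Here the term $L_{Pt}\varepsilon$ from the explicit time dependence is the obstruction to pure descent; to absorb it I would choose $\bar\gamma$ large enough and $\bar\varepsilon(\gamma)$ small enough that whenever $\|\nabla_x P(x_\pi(t_j),t_j)\|\ge\rho$ (i.e.\ $x_\pi(t_j)\notin\mathcal L^{\nabla P,\rho}_{t_j}$) the negative term dominates, yielding $\Delta_j P \le -\tfrac12\gamma\varepsilon\rho^2$ — a strict decrease bounded away from zero. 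Also, by keeping $\gamma\varepsilon^{1/2}$ small I ensure the single-step displacement $\|x_\pi(t_{j+1})-x_\pi(t_j)\|$ is small, and combined with the sub-interval estimate (the continuous solution on $I_j$ stays within $O(\varepsilon^{1/2})$ of the endpoints) this keeps $x_\pi(t)$ inside $\mathcal L_t^{P,P(x^0,t_0)+\lambda}$: as long as $x_\pi(t_j)$ is in a slightly smaller sublevel set, $P$ cannot increase by more than an $O(\varepsilon)$ amount over $I_j$, and by the time $P$ approaches the boundary value $P(x^0,t_0)+\lambda$ the descent inequality has already taken over. This forward-invariance argument is where I need Assumption~\ref{as_Psets}'s inclusion $\mathcal L_t^{\nabla P,\rho}\subseteq\mathcal L_t^{P,P(x^0,t_0)+\lambda}$: it guarantees there is a genuine "buffer" between the region where descent fails and the boundary of the invariant set, so $x_\pi$ can never escape.

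Finally, to get the asymptotic bound I would run a standard argument: as long as $x_\pi(t_j)\notin\mathcal L^{\nabla P,\rho}_{t_j}$, $P$ decreases by at least $\tfrac12\gamma\varepsilon\rho^2$ per step, but $P$ is bounded below by $m_P$ (Assumption~\ref{as_Pbounds}.1), so after finitely many steps — say $N$, with $N\varepsilon\le T := \tfrac{2(P(x^0,t_0)+\lambda-m_P)}{\gamma\rho^2}$ — we must have $x_\pi(t_{j^*})\in\mathcal L^{\nabla P,\rho}_{t_{j^*}}$ for some $j^*$ with $t_{j^*}\le t_0+T$. From that instant on, $P(x_\pi(t_{j^*}),t_{j^*})\le \sup_{\xi\in\mathcal L^{\nabla P,\rho}_{t_{j^*}}}P(\xi,t_{j^*})$; and whenever $x_\pi$ re-enters the region $\|\nabla_x P\|\ge\rho$ it again strictly descends, while each step it spends in $\mathcal L^{\nabla P,\rho}$ can raise $P$ by at most $O(\varepsilon)$ — so refining the invariant-set buffer argument shows $P(x_\pi(t),t)\le\sup_{t\ge t_0+T}\sup_{\xi\in\mathcal L^{\nabla P,\rho}_t}P(\xi,t)$ for all $t\ge t_0+T$, which is the claimed bound. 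Throughout, $\bar\gamma$ is selected first (so that $\gamma\rho^2$ beats the time-drift constant $L_{Pt}$ and the curvature terms), and then for each $\gamma\ge\bar\gamma$ the threshold $\bar\varepsilon(\gamma)$ is taken small enough to make the $O(\varepsilon^{3/2})$ and $O(\varepsilon^2)$ remainders negligible relative to $\gamma\varepsilon\rho^2$ and to keep the step size within the invariance buffer.
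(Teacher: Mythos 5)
Your plan follows essentially the same route as the paper's proof: restrict to the compact level sets $\widetilde{\mathcal D}_t=\mathcal L_t^{P,P(x^0,t_0)+\lambda}$ to get uniform constants, use a Chen--Fliess-type one-step expansion (the paper's Lemma~\ref{lemma_volt}) to obtain $x_\pi(t_{j+1})=x_\pi(t_j)-\gamma\varepsilon\nabla_x P(x_\pi(t_j),t_j)+r_j$ with $\|r_j\|=O(\varepsilon^{3/2})$, apply a second-order Taylor estimate to $P$ to get strict per-step descent whenever $\|\nabla_xP\|$ exceeds a threshold (choosing $\bar\gamma$ to dominate $L_{Pt}$ and $\bar\varepsilon(\gamma)$ small to kill the remainders), show forward invariance of $\widetilde{\mathcal D}_t$ from the small per-step displacement, deduce entry into $\mathcal L^{\nabla P,\rho}$ in finite time $T$ from the uniform decrease and the lower bound $m_P$, and then iterate the descend-or-stay dichotomy. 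This is the same decomposition into steps and the same estimates as the paper; the only cosmetic differences are the threshold $\rho$ versus the paper's $\rho/2$ and a slightly looser $T$.
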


      The proof is in  Appendix~\ref{proof_step1}.



  In the case of time-independent function $P(x)$ and vector fields $f_k(x)$, it is possible to prove a stronger result under milder assumptions.
   Let us denote   the set of local minima of the function $P$ by
 $$S^*_{{\min}}=\{x^*\in D:\text{ there exists }r > 0\text{ s.t. }P(x)\ge P(x^*)\text{ for all }x\in B_r(x^*)\}.$$
  The following theorem holds for the system
  \begin{equation}
\dot x = \sum_{k=1}^m f_k (x)u_k,\quad x\in D\subseteq\mathbb R^n, \,u\in\mathbb R^m.
\label{Sigma_x}
\end{equation}

  \begin{theorem}\label{thm_step1Px}
\quad

      Given system~\eqref{Sigma_x}, let $f_k\in C^2(D;\mathbb R^n)$ satisfy Assumption~\ref{as_rank} in a {domain} $D\subseteq\mathbb R^n$, and let a function $P\in C^2(D;\mathbb R)$ be such that its level sets  $\mathcal L^{P,P(x^0)}=\{x\in D: P(x)\le P(x^0)\}$ are compact for all $x^0\in D$.

      Then for any $\gamma>0$ there exists an $\bar\varepsilon>0$   such that, for any  $\varepsilon\in(0,\bar \varepsilon]$, the $\pi_\varepsilon$-solution $x_\pi(t)$ of system~\eqref{Sigma_x} with the controls
      $u_k=u_k^\varepsilon(a(x),t)$ given by~\eqref{cont}--\eqref{dithers} and the initial data $t_0\ge 0$, $x_\pi(t_0)=x^0\in   D$ is well-defined, and  satisfies the following property:
\begin{equation}\label{Sp}
P(x_\pi(t))\to \alpha^*\in {S^*_{P_{\min}}} \text{ as }t\to+\infty,
\end{equation}
provided that $x^0\notin \{x\in D:\nabla P(x)=0\}\setminus S^*_{{\min}}$. Here
$$S^*_{P_{\min}}=\{P^*\in[m_P,P(x^0)]:\text{ there exists }x^*\in S^*_{{\min}}\text{ such that }P^*=P(x^*)  \}.$$
  \end{theorem}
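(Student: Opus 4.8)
\emph{Proof plan.} Since the data of~\eqref{Sigma_x} are time-independent, the idea is to show that the $\pi_\varepsilon$-solution sampled at the partition points $t_j$ mimics the explicit Euler discretization of the reference gradient flow $\dot x=-\gamma\nabla P(x)$, so that the values of $P$ decrease monotonically along the samples, and then to run a LaSalle-type argument. Note that Assumption~\ref{as_Psets} (convexity of sublevel sets) is not imposed here, so Lemma~\ref{lem_step1} cannot be invoked directly; the plan is to reuse only its underlying one-period expansion and to carry out the convergence analysis by hand on a fixed compact set. To localize, put $m_P:=\min_{\mathcal L^{P,P(x^0)}}P$ and fix $\eta>0$ so small that $K:=\overline{B_\eta\big(\mathcal L^{P,P(x^0)}\big)}$ is a compact subset of $D$ — possible because $\mathcal L^{P,P(x^0)}$ is compact and $D$ is open. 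On $K$ all constants of Assumptions~\ref{as_f} and~\ref{as_Pbounds} are available from $f_k,P\in C^2$, and $\|\mathcal F^{-1}\|$ is bounded on $K$ since $\mathcal F$ is continuous and, by~\eqref{rank}, nonsingular on $D$; thus the quantitative hypotheses hold on $K$ and only the absent convexity must be bypassed.

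On each $I_j$ the coefficient vector $a_j:=a(x_\pi(t_j))$ is frozen and $\mathcal F(x_\pi(t_j))a_j=-\gamma\nabla P(x_\pi(t_j))$ by~\eqref{a1}. Expanding the solution of~\eqref{Sigma_x} driven by~\eqref{cont}--\eqref{dithers} over one period — using that every oscillating term in~\eqref{dithers} has zero mean on $I_j$ while the matched pairs of oscillating terms reproduce the Lie brackets $[f_{j_1},f_{j_2}]$, which is exactly the computation behind Lemma~\ref{lem_step1} — one obtains, writing $y_j=x_\pi(t_j)$, the estimate $y_{j+1}=y_j-\varepsilon\gamma\nabla P(y_j)+r_j$ with $\|r_j\|\le C_1\varepsilon^{3/2}\|\nabla P(y_j)\|$, together with the intra-period bound $\sup_{t\in I_j}\|x_\pi(t)-y_j\|\le C_2\big(\varepsilon^{1/2}\|\nabla P(y_j)\|^{1/2}+\varepsilon\|\nabla P(y_j)\|\big)$, both valid for $\varepsilon$ small and $y_j\in K$; the point is that the whole displacement over $I_j$ is governed by $\|a_j\|\le\gamma M_F\|\nabla P(y_j)\|$, which makes $r_j$ of smaller order than $\varepsilon\|\nabla P(y_j)\|^2$ after projection onto $\nabla P(y_j)$. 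A second-order Taylor expansion of $P$ at $y_j$ then gives, for every $\varepsilon\in(0,\bar\varepsilon]$ with a suitable $\bar\varepsilon=\bar\varepsilon(\gamma,x^0)>0$ and as long as $y_j\in K$,
$$
P(y_{j+1})\le P(y_j)-\varepsilon\gamma\|\nabla P(y_j)\|^2+C_3\varepsilon^{3/2}\|\nabla P(y_j)\|^2\le P(y_j)-\frac{\varepsilon\gamma}{2}\|\nabla P(y_j)\|^2\le P(y_j).
$$

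From here the consequences follow. By induction $y_j\in\mathcal L^{P,P(x^0)}$ for all $j$, and together with the $O(\varepsilon^{1/2})$ intra-period excursion and a standard continuation argument for~\eqref{Sigma_x} this shows that, for $\bar\varepsilon$ small enough, $x_\pi$ is well-defined on $[t_0,+\infty)$ and stays in $K$. The sequence $P(y_j)$ is non-increasing and bounded below by $m_P$, hence $P(y_j)\downarrow\alpha^*\in[m_P,P(x^0)]$; summing the last inequality yields $\sum_j\|\nabla P(y_j)\|^2<\infty$, so $\nabla P(y_j)\to0$, and then the intra-period bound forces $\sup_{t\in I_j}|P(x_\pi(t))-P(y_j)|\to0$, i.e. $P(x_\pi(t))\to\alpha^*$ as $t\to+\infty$. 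Moreover the $\omega$-limit set $\Omega$ of $\{y_j\}$ is a non-empty compact subset of $\{x\in K:\nabla P(x)=0,\ P(x)=\alpha^*\}$.

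It remains to show $\alpha^*\in S^*_{P_{\min}}$. If $\nabla P(x^0)=0$, then $x^0\in S^*_{\min}$ by the hypothesis on $x^0$, $a(x^0)=0$, all controls vanish on every $I_j$, so $x_\pi(t)\equiv x^0$ and $\alpha^*=P(x^0)\in S^*_{P_{\min}}$. If $\nabla P(x^0)\ne0$, suppose for contradiction that no point of $\Omega$ is a local minimum of $P$; a covering argument then produces $\delta,\theta,\rho>0$ such that for every $x$ with ${\rm dist}(x,\Omega)<\delta$ there is $p\in B_\rho(x)\cap K$ with $P(p)\le\alpha^*-\theta$. Since ${\rm dist}(y_j,\Omega)\to0$ while $P(y_j)\ge\alpha^*$ for all $j$, for large $j$ there is a point within distance $\rho$ of $y_j$ whose $P$-value is at least $\theta$ below $P(y_j)$; invoking the (perturbed) steepest-descent nature of the sampled dynamics one concludes that $P(y_j)<\alpha^*$ for some $j$, contradicting $P(y_j)\downarrow\alpha^*$. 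Hence $\Omega\cap S^*_{\min}\ne\emptyset$ and $\alpha^*=P(x^*)$ for some $x^*\in S^*_{\min}$, i.e. $\alpha^*\in S^*_{P_{\min}}$. I expect the two quantitative inputs to be the main obstacles: the one-period expansion with a remainder that, after contraction with $\nabla P(y_j)$, is of strictly smaller order than $\varepsilon\|\nabla P(y_j)\|^2$ (this is what makes $P$ monotone along the samples for every $\gamma>0$, not only for large $\gamma$), and — more seriously — the last step, which must exclude convergence of $\{y_j\}$ to a set of non-minimal critical points; here the oscillatory character of the controls and the uniform descent available near $\Omega$ are used in an essential way.
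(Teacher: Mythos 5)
Your proof is a genuine reconstruction; the paper does not give one. In Appendix~B the paper proves Lemma~\ref{lem_step1}, and for Theorem~\ref{thm_step1Px} it simply cites \cite{ZG17a} for the claim that $P(x_\pi(t))$ converges to the set of \emph{critical values} of $P$, adding a one-sentence remark that the refinement ``$\alpha^*\in S^*_{P_{\min}}$'' follows from the monotonicity $P(x^0)\ge P(x_\pi(t_0+\varepsilon))\ge\dots$ together with uniqueness of solutions. Your first two stages — the local compactification to $K$, the one-period expansion with $\mathcal F(y_j)a_j=-\gamma\nabla P(y_j)$, the remainder bound (note: in the autonomous case the paper's~\eqref{Rest} gives $\|r_j\|\lesssim(\varepsilon\|\nabla P(y_j)\|)^{3/2}$; your $\|\nabla P(y_j)\|^1$ is a slip, though since $\|\nabla P\|$ is bounded on $K$ the descent inequality $P(y_{j+1})\le P(y_j)-\frac{\varepsilon\gamma}{2}\|\nabla P(y_j)\|^2$ still holds), the monotone convergence of $P(y_j)$, $\sum_j\|\nabla P(y_j)\|^2<\infty$, and the intra-period closeness — are correct and match the mechanism behind the paper's Lemma~\ref{lem_step1}. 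They yield cleanly what \cite{ZG17a} is cited for, namely convergence of $P(x_\pi(t))$ to a critical value.

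The final step, however, contains a genuine gap, and you yourself flag it. You assume $\Omega$ contains no local minimum, produce $\delta,\theta,\rho>0$ so that every $x$ near $\Omega$ has a companion $p\in B_\rho(x)$ with $P(p)\le\alpha^*-\theta$, and then assert that ``the (perturbed) steepest-descent nature of the sampled dynamics'' forces $P(y_j)<\alpha^*$ for some $j$. This does not follow. The update is $y_{j+1}=y_j-\varepsilon\gamma\nabla P(y_j)+r_j$ with $\|\nabla P(y_j)\|\to0$, so the per-step displacement and the per-step decrease $\frac{\varepsilon\gamma}{2}\|\nabla P(y_j)\|^2$ both shrink to zero; the existence of a point at distance $\rho$ with $P$-value $\theta$ below $\alpha^*$ gives the dynamics no way to jump the intervening ridge. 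A gradient-descent sequence converging to a saddle point $x_s$ exhibits exactly the scenario you set up (points with lower $P$-value arbitrarily close to $\Omega=\{x_s\}$, yet $P(y_j)\downarrow P(x_s)$ and never crosses), so the alleged contradiction is not there. The covering argument, as written, would ``disprove'' the possibility of convergence to any non-minimal critical point of any $C^2$ function under gradient descent, which is false: initial points on the stable manifold do converge there. To close this step you must invoke something beyond the descent inequality — some structural property that steers the sampled, oscillatory dynamics off of saddle-type critical sets for the admissible class of initial points $x^0\notin\{\nabla P=0\}\setminus S^*_{\min}$. The paper's own cryptic appeal to uniqueness of solutions points in a different direction (arguing via non-branching of trajectories), and in any case neither your covering argument nor its current one-sentence form is a complete proof of this exclusion.
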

    The proof of the asymptotic convergence of $P(x_\pi(t))$ to the set of critical values of $P$ can be found in~\cite{ZG17a}. {More strict property~\eqref{Sp} follows from the fact that, for small enough $\varepsilon$, $P(x^0)\ge P(x_\pi(t_0+\varepsilon))\ge P(x_\pi(t_0+2\varepsilon))\ge\dots $ and the uniqueness of the solutions of system~\eqref{Sigma_x} with the controls $u_k=u_k^\varepsilon(a(x),t)$ and the initial data $t_0\ge 0$, $x(t_0)=x^0\in   D$.}


The approximate convergence of a time-varying function $P$ to its minimal value can be proved under an additional requirement, which also allows to estimate the convergence rate:

    \begin{theorem}~\label{thm_step1Pxt}

    Let Assumptions~\ref{as_f}--\ref{as_Pbounds} be satisfied for system~\eqref{Sigma} with a function $P(x,t)$, and let $\rho>0$ be such that $\emptyset \neq \mathcal L_t^{P,m_P+\rho}\subset D$ for all $t\ge0$.
     Assume moreover that, for any family of compact subsets $\widetilde{\mathcal D}_t\subset  D$, ${t\ge0}$, there exists a $\mu>0$ and $\nu\ge0$ such that
  \begin{equation}\label{Pgrad}
    \|\nabla_xP(x,t)\|^2\ge \mu (P(x,t)-m_P)^\nu\text{ for all }x\in \widetilde{\mathcal D}_t,\,t\ge0.
  \end{equation}
  Then for any $\gamma^*>0$ there is a $\bar\gamma>\gamma^*$ such that, for any  $\gamma>\bar\gamma$ and $\varepsilon\in(0,\bar\varepsilon)$ ($\bar \varepsilon> 0$ depends on $\gamma$), the $\pi_\varepsilon$-solution $x_\pi(t)$ of system~\eqref{Sigma} with the controls $u_k=u_k^\varepsilon(a(x,t),t)$
  given by~\eqref{cont}--\eqref{dithers} and the initial data $t_0\ge 0$, $x_\pi(t_0)=x^0\in \mathcal  D_{t_0}$ is well-defined, and
    satisfies one of the following properties:
  \begin{itemize}
    \item [I)] If $\nu=1$, then
  $$
P(x_\pi(t),t)-m_P\le (P(x^0,t_0)-m_P)e^{-\mu\gamma^*(t-t_0-\varepsilon)}+\rho\text{ for all }t\ge t_0.$$
    \item [II)] If $\nu>1$, then
$$
P(x_\pi(t),t)-m_P\le \big((P(x^0,t_0)-m_P)^{1-\nu}+\mu\gamma^*(\nu-1)(t-t_0-\varepsilon)\big)^{\frac{1}{1-\nu}}+\rho,\, t\ge t_0.
$$
  \end{itemize}
    \end{theorem}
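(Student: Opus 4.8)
\medskip\noindent\emph{Proof plan.}
The plan is to reduce the claim to a scalar difference inequality for the sampled values $V_j:=P(x_\pi(t_j),t_j)-m_P\ge 0$ and then to compare the resulting recursion with a scalar ODE. Fix a compact family $\widetilde{\mathcal D}_t\subset D$ containing the initial data and the sublevel set $\mathcal L_t^{P,P(x^0,t_0)}$, so that all constants of Assumptions~\ref{as_f}--\ref{as_Pbounds} and the pair $\mu,\nu$ of~\eqref{Pgrad} are available; a bootstrap argument will confine the whole $\pi_\varepsilon$-trajectory to $\widetilde{\mathcal D}_t$. Arguing as in the proof of Lemma~\ref{lem_step1} (Appendix~\ref{proof_step1}) — i.e. estimating how closely one step of the $\pi_\varepsilon$-solution of~\eqref{Sigma} with the controls~\eqref{cont}--\eqref{dithers} follows one explicit Euler step of the gradient flow~\eqref{sys_grad} — one obtains on each interval $I_j=[t_j,t_{j+1})$
\[
V_{j+1}\le V_j-\gamma\varepsilon\,\|\nabla_xP(x_\pi(t_j),t_j)\|^2+\varepsilon\big(L_{Pt}+\omega(\gamma,\varepsilon)\big),
\]
where $\varepsilon L_{Pt}$ bounds the effect of the explicit time dependence of $P$ (Assumption~\ref{as_Pbounds}) and $\varepsilon\,\omega(\gamma,\varepsilon)$, with $\omega(\gamma,\varepsilon)\to 0$ as $\varepsilon\to0^+$, collects the higher-order averaging error. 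Substituting the gradient inequality~\eqref{Pgrad} at the sampling instant, $\|\nabla_xP(x_\pi(t_j),t_j)\|^2\ge\mu V_j^\nu$, gives
\begin{equation}\label{rec_plan}
V_{j+1}\le V_j-\mu\gamma\varepsilon\,V_j^\nu+\varepsilon R,\qquad R:=L_{Pt}+\omega(\gamma,\varepsilon).
\end{equation}
Since $V_{j+1}\le V_j$ whenever $V_j\ge(R/(\mu\gamma))^{1/\nu}$, the bootstrap closes and~\eqref{rec_plan} holds for every $j$. I would run all estimates with $\rho/2$ in place of $\rho$, keeping the other half of $\rho$ to absorb the $O(\sqrt\varepsilon)$ deviation of $P(x_\pi(\cdot),\cdot)$ between sampling instants: first fix $\bar\gamma$ so large that, for $\gamma>\bar\gamma$ and all small $\varepsilon$, $(\gamma-\gamma^*)\mu(\rho/2)^\nu\ge L_{Pt}+1\ge R$ and $(R/(\mu\gamma))^{1/\nu}\le\rho/2$ — possible because $\gamma$ is free to be large while $\rho$ is prescribed in advance — and then fix $\bar\varepsilon(\gamma)$ small enough that $\mu\gamma\varepsilon<1$, $\omega(\gamma,\varepsilon)\le 1$, and the between-sampling deviation is at most $\rho/2$.

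\medskip\noindent\emph{Case $\nu=1$.}
Here~\eqref{rec_plan} is the affine recursion $V_{j+1}\le(1-\mu\gamma\varepsilon)V_j+\varepsilon R$ with $0<1-\mu\gamma\varepsilon<1$; iterating and summing the geometric series,
\[
V_j\le(1-\mu\gamma\varepsilon)^jV_0+\frac{R}{\mu\gamma}\le e^{-\mu\gamma\varepsilon j}V_0+\frac{\rho}{2}\le e^{-\mu\gamma^*\varepsilon j}V_0+\frac{\rho}{2}.
\]
For $t\in I_j$ one has $\varepsilon j=t_j-t_0>t-t_0-\varepsilon$, so $e^{-\mu\gamma^*\varepsilon j}<e^{-\mu\gamma^*(t-t_0-\varepsilon)}$; adding the between-sampling deviation $P(x_\pi(t),t)-m_P\le V_j+\rho/2$ yields property~I.

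\medskip\noindent\emph{Case $\nu>1$.}
Split $\mu\gamma\varepsilon V_j^\nu=\mu\gamma^*\varepsilon V_j^\nu+(\gamma-\gamma^*)\mu\varepsilon V_j^\nu$ in~\eqref{rec_plan}. As long as $V_j>\rho/2$, $(\gamma-\gamma^*)\mu V_j^\nu>(\gamma-\gamma^*)\mu(\rho/2)^\nu\ge R$, so the surplus absorbs $\varepsilon R$ and $V_{j+1}\le V_j-\mu\gamma^*\varepsilon V_j^\nu$. Let $y$ solve $\dot y=-\mu\gamma^* y^\nu$, $y(t_0)=V_0$, that is $y(t)=\big(V_0^{1-\nu}+\mu\gamma^*(\nu-1)(t-t_0)\big)^{\frac{1}{1-\nu}}$. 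Since $y$ is nonincreasing, $y(t_{j+1})\ge y(t_j)-\mu\gamma^*\varepsilon\,y(t_j)^\nu=g(y(t_j))$ with $g(v):=v-\mu\gamma^*\varepsilon v^\nu$ increasing on $[0,V_0]$ for small $\varepsilon$; by induction $V_j\le y(t_j)$ up to the first index $j^*$ (finite, as $y(t_j)\to0$) with $V_{j^*}\le\rho/2$. For $j\ge j^*$ one has the trapping dichotomy: either $V_j\ge(R/(\mu\gamma))^{1/\nu}$, whence~\eqref{rec_plan} gives $V_{j+1}\le V_j$, or $V_j<(R/(\mu\gamma))^{1/\nu}\le\rho/2$, whence $V_{j+1}\le V_j+\varepsilon R\le\rho/2$; thus $V_j\le\rho/2$ for all $j\ge j^*$. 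Using $t_j>t-\varepsilon$ and monotonicity of $y$, $V_j\le y(t-\varepsilon)$ for $j<j^*$, while $V_j\le\rho/2\le y(t-\varepsilon)+\rho/2$ for $j\ge j^*$; adding the between-sampling deviation yields property~II.

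\medskip\noindent\emph{Main obstacle.}
The delicate point is not the recursion but the bookkeeping behind the clean bounds with a single additive $\rho$: one must simultaneously confine the $\pi_\varepsilon$-solution to a fixed compact set, dominate the genuinely $O(\varepsilon)$ contribution $\varepsilon L_{Pt}$ of the time dependence of $P$ — and the higher-order averaging error — by the gain surplus $(\gamma-\gamma^*)\mu\varepsilon\|\nabla_xP\|^2$ (the source of the requirement $\gamma>\bar\gamma$, with $\bar\gamma$ growing like $\gamma^*+(L_{Pt}+1)/(\mu\rho^\nu)$), and absorb the $O(\sqrt\varepsilon)$ between-sampling deviation by shrinking $\bar\varepsilon(\gamma)$, all without exceeding the prescribed $\rho$. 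The comparison step for $\nu>1$ needs care as well: $g(v)=v-\mu\gamma^*\varepsilon v^\nu$ must stay monotone on the invariant range for the induction $V_j\le y(t_j)$ to go through, and the shift between the sampling instants $t_j$ and the running time $t$ must be tracked — this is precisely what produces the $-\varepsilon$ inside the exponential in~I and inside the power law in~II.
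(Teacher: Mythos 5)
Your proposal is correct and follows essentially the same route as the paper's proof in Appendix~C: both reduce the problem to the one-step estimate from the proof of Lemma~\ref{lem_step1}, obtain a sampled recursion of the form $V_{j+1}\le V_j-\mu\gamma'\varepsilon V_j^\nu+\varepsilon\cdot\text{const}$, absorb the constant offset by enlarging $\bar\gamma$, derive the exponential/polynomial decay of the $V_j$, and reserve part of $\rho$ for the between-sampling deviation. The only cosmetic differences are that you compare the discrete iterates with the solution of $\dot y=-\mu\gamma^*y^\nu$ via monotonicity of the map $g(v)=v-\mu\gamma^*\varepsilon v^\nu$, whereas the paper applies the convexity inequality $1-\theta\le(1+k\theta)^{-1/k}$ directly to the recursion and establishes the finite entry time $j^*$ by contradiction rather than by the decay of $y(t_j)$.
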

The proof is in Appendix~\ref{proof_step1Pxt}.
\begin{remark}
As it follows from the proof of Theorem~\ref{thm_step1Pxt}, it suffices to take
$$
\bar\gamma=\gamma^*+\frac{2^{2\nu}}{\rho^\nu\mu}\left(L_{Pt}+L_{Px}c_{R1}(\sqrt{{M_F} L_{Px}}+{H_{Px}}c_{R1}{M_F}\bar\varepsilon)\right),
$$
where $c_{R1}=\frac{L_{ft}}{2}+\frac{H_{ft}}{6}\sqrt{{M_F} L_{Px}}$. Obviously, one may put $\bar\gamma=\gamma^*+\dfrac{2^{2\nu}}{\rho^\nu\mu}L_{Pt}$ if the vector fields of system~\eqref{Sigma} are time-independent, and
$\bar\gamma=\gamma^*$ if, additionally, the function $P$ does not depend on $t$.
\end{remark}

    \begin{corollary}~\label{cor1_step1}
      Assume that the constants required in Assumptions~\ref{as_f}--\ref{as_Pbounds} (and in~\eqref{Pgrad}) exist for all $x\in \mathcal L_t^{P,P(x^0,t_0)}$, $x^0\in D$, $t_0\ge0$. Then the assertions of Lemma~\ref{lem_step1} (Theorem~\ref{thm_step1Pxt}) remain valid even if the level sets of the function $P(x,t)$ are not compact.

      Similarly, if the functions $f_k(x)$ are globally Lipschitz in $\mathcal L^{P,P(x^0)} $, for any $x^0\in D$, the functions $f_k(x)$, $L_{f_{j}}f_k(x,t)$, $L_{f_l}L_{f_{j}}f_k(x)$, $\| \mathcal F^{-1}(x)\|$, $\frac{\partial P(x)}{\partial x}$, $\frac{\partial^2 P(x)}{\partial x^2}$ are bounded, and the function $P(x)$ is bounded from below for all $x\in \mathcal L^{P,P(x^0)} $, $x^0\in D$, then the assertion of Theorem~\ref{thm_step1Px}  remains valid even if the level sets of the function $P(x)$ are not compact.
    \end{corollary}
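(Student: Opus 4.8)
\medskip
\noindent\textbf{Proof strategy.}
The plan is to observe that the compactness of the sublevel sets of $P$ enters the proofs of Lemma~\ref{lem_step1} and of Theorems~\ref{thm_step1Px}--\ref{thm_step1Pxt} \emph{only} through the need to extract, via Assumptions~\ref{as_f}--\ref{as_Pbounds} and the inequality~\eqref{Pgrad}, the uniform constants $M_f,L_{fx},L_{ft},L_{2f},H_{fx},H_{ft}$, $M_F$, $m_P,L_{Px},L_{Pt},L_{2Px},L_{2Pt},H_{Px}$ (and $\mu,\nu$) on a set through which the trajectory passes; and that in those proofs the $\pi_\varepsilon$-solution is kept in a \emph{fixed} sublevel set of $P$. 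Once these constants are postulated directly on that sublevel set, all estimates of Appendices~\ref{proof_step1} and~\ref{proof_step1Pxt} (and the argument of~\cite{ZG17a}) go through verbatim, with the family of compact subsets $\widetilde{\mathcal D}_t$ used there replaced by the invariant set described below. So I would essentially re-read those proofs line by line and check that no step uses compactness beyond producing these bounds.

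First I would pin down the invariant set: in the setting of Lemma~\ref{lem_step1} and Theorem~\ref{thm_step1Pxt} it is $\widetilde{\mathcal D}_t:=\mathcal L_t^{P,P(x^0,t_0)+\lambda}$ with $\lambda$ as in Assumption~\ref{as_Psets}, and in the setting of Theorem~\ref{thm_step1Px} it is $\widetilde{\mathcal D}_t\equiv\mathcal L^{P,P(x^0)}$. In either case this set is closed, being a sublevel set of the continuous map $P(\cdot,t)$, and by the hypothesis of Corollary~\ref{cor1_step1} it carries all the constants listed above. In particular the coefficient vector $a(x,t)=-\gamma\mathcal F^{-1}(x,t)\nabla_xP(x,t)$ from~\eqref{a1} satisfies $\|a(x,t)\|\le\gamma M_F L_{Px}$ on $\widetilde{\mathcal D}_t$, so the right-hand side of the closed-loop system under the controls~\eqref{cont}--\eqref{dithers} is bounded by an $\varepsilon$- and $\gamma$-dependent constant as long as $x_\pi(t)$ stays in $\widetilde{\mathcal D}_t$.

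Next I would run the forward-invariance bootstrap. On each sampling interval $I_j=[t_j,t_{j+1})$ the velocity bound just obtained yields a Lipschitz-in-time a priori estimate for $x_\pi$ that is valid \emph{as long as} $x_\pi(t)\in\widetilde{\mathcal D}_t$, so $x_\pi$ can neither reach $\partial D$ nor escape $\widetilde{\mathcal D}_t$ instantaneously; then the one-step change of $P$ over $I_j$, which in Appendix~\ref{proof_step1} is estimated purely from the constants evaluated at points of $\widetilde{\mathcal D}_t$, is nonpositive (respectively, stays within the $\lambda$-margin of Assumption~\ref{as_Psets}, respectively, decreases by the amount dictated by~\eqref{Pgrad}) once $\gamma\ge\bar\gamma$ and $\varepsilon\le\bar\varepsilon(\gamma)$. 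An induction on $j$ then gives $x_\pi(t)\in\widetilde{\mathcal D}_t$ for all $t\ge t_0$ and global well-posedness of $x_\pi$. With the trajectory thus confined, the remaining parts of the cited proofs need no change: the bound $P(x_\pi(t),t)\le\sup_{t\ge t_0+T}\sup_{\xi\in\mathcal L^{\nabla P,\rho}_t}P(\xi,t)$ of Lemma~\ref{lem_step1} uses only the one-step decrease together with the inclusion $\mathcal L_t^{\nabla P,\rho}\subseteq\widetilde{\mathcal D}_t$; the exponential/algebraic rates of Theorem~\ref{thm_step1Pxt} follow by iterating the one-step inequality and~\eqref{Pgrad}; and for Theorem~\ref{thm_step1Px} the monotonicity $P(x^0)\ge P(x_\pi(t_0+\varepsilon))\ge P(x_\pi(t_0+2\varepsilon))\ge\dots$ keeps $x_\pi$ inside $\mathcal L^{P,P(x^0)}$, on which the hypotheses reproduce exactly the uniform bounds used in~\cite{ZG17a}, so~\eqref{Sp} is obtained as before. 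No subsequence-extraction or other genuinely compactness-based argument occurs anywhere else.

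I expect the forward-invariance bootstrap to be the only delicate point. Without compactness of the level set one must rule out a priori that the $\pi_\varepsilon$-solution leaves $\widetilde{\mathcal D}_t$ or approaches $\partial D$ within a single sampling interval, before the one-step decrease estimate can be invoked; this circularity is broken exactly as in Appendix~\ref{proof_step1}, by first using the uniform bounds on $\|f_k\|$ and on $\|a(x_\pi(t_j),t_j)\|$ to get a crude Lipschitz bound on $x_\pi$ over $I_j$ conditional on $x_\pi(t)\in\widetilde{\mathcal D}_t$, and then choosing $\bar\varepsilon(\gamma)$ small enough (and $\gamma$ large enough) so that $P$ does not increase past the admissible margin over $I_j$, which closes the induction and guarantees that the trajectory never reaches the part of $D$ where the postulated bounds might fail.
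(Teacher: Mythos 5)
Your proposal is correct and takes essentially the same route as the paper. The paper's own ``proof'' is the single sentence \emph{``These results follow from the proofs of Lemma~\ref{lem_step1} and Theorem~\ref{thm_step1Pxt},''} and what you have written is exactly the content of that remark unpacked: compactness is invoked in Assumptions~\ref{as_f}--\ref{as_Pbounds} and~\ref{as_Psets} only to \emph{produce} the uniform constants $M_f,L_{fx},L_{ft},L_{2f},H_{fx},H_{ft},M_F,m_P,L_{Px},L_{Pt},L_{2Px},L_{2Pt},H_{Px}$ (and $\mu,\nu$), the $\pi_\varepsilon$-solution is shown step by step to remain in the sublevel set $\widetilde{\mathcal D}_t$, and once the constants are postulated directly on that set the estimates in Appendices~\ref{proof_step1} and~\ref{proof_step1Pxt} (and the argument of~\cite{ZG17a} for Theorem~\ref{thm_step1Px}) carry over verbatim, with no subsequence extraction or other genuinely compactness-dependent step. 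You also correctly flag the only delicate point, the forward-invariance bootstrap over a single sampling interval, and close it by the same Gr\"onwall/one-step-decrease induction that the paper uses in Step~1 of Appendix~\ref{proof_step1}.
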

    \begin{corollary}~\label{cor1_step2}

      Let the conditions of Theorem~\ref{thm_step1Px} be satisfied.  Furthermore, assume that for any compact subset $\widetilde{\mathcal D}\subset  D$ there exist a $\mu>0$ and $\nu\ge 1$  such that
$
    \|\nabla P(x)\|^2\ge \mu (P(x)-m_P)^\nu\text{ for all }x\in \widetilde{\mathcal D},
$
  where $m_P$ is defined in Assumption~\ref{as_Pbounds}.1.

   Then for any $\gamma>\gamma^*>0$ there exists an $\bar\varepsilon>0$  such that, for any $\varepsilon\in(0,\bar\varepsilon)$, the $\pi_\varepsilon$-solution $x_\pi(t)$ of system~\eqref{Sigma_x} with the controls $u_k=u_k^\varepsilon(a(x),t)$ given by~\eqref{cont}--\eqref{dithers}
   and the initial data $t_0\ge 0$, $x_\pi(t_0)=x^0\in   D$ is well-defined, and satisfies one of the following properties:
     \begin{itemize}
    \item [I)] If $\nu=1$, then
  $$
P(x_\pi(t))-m_P\le (P(x^0)-m_P)e^{-\mu\gamma^*(t-t_0-\varepsilon)}\text{ for all }t\ge t_0.$$
    \item [II)] If $\nu>1$, then
$$
P(x_\pi(t))-m_P\le \big((P(x^0)-m_P)^{1-\nu}+\mu\gamma^*(\nu-1)(t-t_0-\varepsilon)\big)^{\frac{1}{1-\nu}}\text{ for all }t\ge t_0.
$$
  \end{itemize}
    \end{corollary}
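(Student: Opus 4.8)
The plan is to derive Corollary~\ref{cor1_step2} as the autonomous specialization of Theorem~\ref{thm_step1Pxt}, in which the residual term $\rho$ can be driven to zero. As a first (routine) reduction: since $f_k$ and $P$ do not depend on $t$ and are of class $C^2$, the compactness of the level set $\mathcal L^{P,P(x^0)}$ guarantees that all the constants in Assumptions~\ref{as_f}--\ref{as_Pbounds} exist on $\mathcal L^{P,P(x^0)}$ with $L_{ft}=H_{ft}=L_{Pt}=L_{2Pt}=0$, and the assumed estimate $\|\nabla P(x)\|^2\ge\mu(P(x)-m_P)^\nu$ on compact subsets is exactly condition~\eqref{Pgrad}. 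This estimate moreover forces $\nabla P(x^*)=0\Rightarrow P(x^*)=m_P$, so that $m_P=\min_D P$ and $\mathcal L^{P,m_P+\rho}\neq\emptyset$ for every $\rho>0$; and if $\nabla P(x^0)=0$ the assertion is trivial (then $P(x^0)=m_P$ and the right-hand sides reduce to $0$), so we may assume $\nabla P(x^0)\neq0$.

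With these observations, Theorem~\ref{thm_step1Pxt} applies to system~\eqref{Sigma_x} for every sufficiently small $\rho>0$, and --- by the remark following that theorem, in which the entire correction to $\bar\gamma$ is proportional to the now-vanishing quantities $L_{Pt}$ and $c_{R1}=\tfrac{L_{ft}}{2}+\tfrac{H_{ft}}{6}\sqrt{M_F L_{Px}}$ --- it applies with $\bar\gamma=\gamma^*$. Thus, for any $\gamma>\gamma^*$ and any $\varepsilon$ below the threshold $\bar\varepsilon$ produced by that theorem, the $\pi_\varepsilon$-solution of~\eqref{Sigma_x} is well-defined, remains in $\mathcal L^{P,P(x^0)}$ (this and the monotonicity $P(x^0)\ge P(x_\pi(t_1))\ge P(x_\pi(t_2))\ge\dots$ being supplied by Theorem~\ref{thm_step1Px}), and satisfies, for all $t\ge t_0$,
$$
P(x_\pi(t))-m_P\le(P(x^0)-m_P)e^{-\mu\gamma^*(t-t_0-\varepsilon)}+\rho\quad(\nu=1),
$$
and the analogous bound with $\bigl((P(x^0)-m_P)^{1-\nu}+\mu\gamma^*(\nu-1)(t-t_0-\varepsilon)\bigr)^{1/(1-\nu)}$ in place of the exponential when $\nu>1$. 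It remains to let $\rho\downarrow0$ for a fixed $\varepsilon$: the autonomous vector fields and potential make the threshold $\bar\varepsilon$ in Theorem~\ref{thm_step1Pxt} independent of $\rho$ (the $\rho$-dependence in that proof enters only through the time-variation constants, all of which are zero here), so the limit yields precisely the claimed estimates.

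The step I expect to need the most care is the justification that $\bar\varepsilon$ can indeed be chosen uniformly in $\rho$ --- equivalently, that in the autonomous case the approximation of the gradient flow~\eqref{sys_grad} by $x_\pi(t)$ is uniform in $t$ rather than merely on compact time intervals. Concretely, one has to revisit the one-step estimate underlying Theorem~\ref{thm_step1Pxt} and verify that, with the time-variation constants set to zero, its interval-wise remainder is bounded on $\mathcal L^{P,P(x^0)}$ by $C\gamma^2\varepsilon^2\|\nabla P(x_\pi(t_j))\|^2$ --- i.e. proportional to $\|\nabla P\|^2$ rather than additive --- so that, together with $\|\nabla P(x_\pi(t_j))\|^2\ge\mu(P(x_\pi(t_j))-m_P)^\nu$ and the strict gain margin $\gamma>\gamma^*$, it is absorbed leaving no offset; the sampled-value monotonicity from Theorem~\ref{thm_step1Px} then turns the resulting grid recursion $r_{j+1}\le r_j-\gamma^*\mu\varepsilon\,r_j^\nu$ (with $r_j:=P(x_\pi(t_j))-m_P$) into a bound valid for all $t$, the shift $t-t_0-\varepsilon$ absorbing the oscillatory transient on the first interval.
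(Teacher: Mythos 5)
Your high-level plan --- reduce to Theorem~\ref{thm_step1Pxt} by specializing to the autonomous case and then suppress the residual $\rho$ --- is the right idea, and it does reflect the paper's one-line ``these results follow from the proofs of Lemma~\ref{lem_step1} and Theorem~\ref{thm_step1Pxt}''. But there is a genuine gap in the way you try to justify the limit $\rho\downarrow0$, and the paper's intended argument (re-run the proof, not apply the theorem as a black box) sidesteps exactly the point where your argument breaks.

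The assertion ``the $\rho$-dependence in that proof enters only through the time-variation constants, all of which are zero here'' is not correct. In the proof of Theorem~\ref{thm_step1Pxt}, the first sampling threshold $\varepsilon_1(\gamma)$ is defined as the smallest positive root of
$$\sqrt{\varepsilon\gamma M_F}\,e^{L_{fx}\varepsilon}c_u M_f L_{Px}^{3/2}+\varepsilon L_{Pt}=\tfrac{\rho}{4},$$
and the left-hand side does \emph{not} vanish when $L_{ft}=H_{ft}=L_{Pt}=L_{2Pt}=0$: the dominant term $\sqrt{\varepsilon\gamma M_F}\,e^{L_{fx}\varepsilon}c_u M_f L_{Px}^{3/2}$ is independent of all time-variation constants. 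Consequently $\varepsilon_1(\gamma)\to0$ as $\rho\to0$, so the family of admissible $\bar\varepsilon(\gamma)$'s delivered by Theorem~\ref{thm_step1Pxt} shrinks to zero as $\rho\downarrow0$, and you cannot hold $\varepsilon$ fixed while letting $\rho$ go to zero. The $\rho$-dependence here is structural: $\rho$ enters because the enlarged level set $\widetilde{\mathcal D}_t=\mathcal L^{P,P(x^0,t_0)+m_P+\rho}$ is used to confine the trajectory over a single sampling interval, and this confinement role for $\rho$ is present regardless of whether the data are autonomous. To rescue the black-box route you would have to re-derive a $\rho$-free version of $\varepsilon_1$, which in effect means re-running the proof rather than citing the theorem.

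Your proposed repair also misstates the scaling of the intra-interval oscillation. The one-step remainder at grid nodes is indeed $O\big((\varepsilon\gamma)^{3/2}\|\nabla P(x_\pi(t_j))\|^2\big)$ once $c_{R1}=0$ and the bounded factor $\sqrt{\|\nabla P\|}\le\sqrt{L_{Px}}$ is absorbed, and that is what gives the clean grid recursion $r_{j+1}\le r_j\big(1-\varepsilon\gamma\mu\,r_j^{\nu-1}(1-\sqrt{\varepsilon\gamma}\,c_{p1})\big)$. But what you need for the ``valid for all $t$'' part of the statement is an estimate of the fluctuation of $P(x_\pi(t))$ for $t\in(t_j,t_{j+1})$, and that is of size $O\big(\sqrt{\varepsilon\gamma}\,\|\nabla P(x_\pi(t_j))\|^{3/2}\big)$, coming from $\nabla P(x_\pi(t_j))^\top(x_\pi(t)-x_\pi(t_j))$ with $\|x_\pi(t)-x_\pi(t_j)\|=O(\sqrt{\varepsilon\gamma\|\nabla P(x_\pi(t_j))\|})$. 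This is \emph{not} proportional to $\|\nabla P\|^2$, and it is not ``absorbed leaving no offset'' by the gain margin in the way you describe; it is also not a ``transient on the first interval'' --- it recurs on every sampling interval, and the $t-t_0-\varepsilon$ shift buys only one interval of slack against the grid estimate, not a bound on those recurring excursions. The paper's own Theorem~\ref{thm_step1Pxt} handles exactly this excursion by the $\rho/4$ buffer (via $\varepsilon_1$), and one of the things the proof of Corollary~\ref{cor1_step2} has to do --- and does not get for free from the autonomy of the data --- is to show that the buffer can be removed without destroying the estimate for all $t\ge t_0$. Your write-up does not supply this step; the sampled-value monotonicity from Theorem~\ref{thm_step1Px} controls $P(x_\pi(t_j))$ along the grid but says nothing about $P(x_\pi(t))$ for $t$ strictly between nodes.

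To summarize: your reductions in the first paragraph (vanishing time-variation constants, $m_P=\min_D P$, triviality when $\nabla P(x^0)=0$, $\bar\gamma=\gamma^*$ in the autonomous case) are all fine and match the paper. What is missing, and where the paper's ``re-run the proofs'' route does real work, is (i) showing that the sampling threshold can be chosen independently of $\rho$ --- which requires constructing the working compact set $\widetilde{\mathcal D}$ directly from $\mathcal L^{P,P(x^0)}$ and Theorem~\ref{thm_step1Px}'s containment $\mathcal L^{P,P(x^0)}\subset D$ rather than inheriting the $\rho$-dependent definition in Theorem~\ref{thm_step1Pxt} --- and (ii) controlling the intra-interval excursion of $P$ without the $\rho$ buffer, where the relevant quantity scales like $\sqrt{\varepsilon\gamma}\,\|\nabla P\|^{3/2}$ rather than like $\varepsilon^2\gamma^2\|\nabla P\|^2$.
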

    These results follow from the proofs of Lemma~\ref{lem_step1} and Theorem~\ref{thm_step1Pxt}.

Lemma~\ref{lem_step1} and Theorem~\ref{thm_step1Px} give rise to several important results applicable to more specific control problems.

\subsection{Stabilization problem}
\label{sec_stab}

In this section, we consider a classical control problem of finding control laws which ensure the asymptotic stability of a point $x=x^*\in D$ for system~\eqref{Sigma_x}.
\begin{problem}[Stabilization problem]\label{prob_stab}
Given system~\eqref{Sigma_x} and a point $x^*\in D$, the goal is to construct a feedback control of the form~\eqref{cont}--\eqref{dithers} ensuring the asymptotic stability of $x^*$ for the corresponding closed-loop system.
\end{problem}



To solve Problem~1, we apply the results of Section~\ref{sec_basic} with a Lyapunov-like function $P(x)$:
\begin{theorem}\label{thm_step1stab}
\quad

      Given system~\eqref{Sigma_x} with $f_k\in C^2(D;\mathbb R^n)$ satisfying Assumption~\ref{as_rank} in a {domain} $D\subseteq\mathbb R^n$ and
        a point $x^*\in D$, let a function $P\in C^2(D;\mathbb R)$ satisfy the following conditions:
      \begin{itemize}
        \item \ref{thm_step1stab}.1) there exist functions $w_{11},w_{12}\in \mathcal K$ such that $\{x\in \mathbb R^n: \|x-x^*\|\le w_{11}^{-1}\big(P(x^0)-m_P\big)\}\subset D$ for all $x^0\in D$, and
            $$
          w_{11}(\|x-x^*\|)\le  P(x)-m_P\le w_{12}(\|x-x^*\|)\text{ for all }x\in D;
            $$
        \item \ref{thm_step1stab}.2) $\|\nabla P(x)\|=0$ if and only if $x=x^*$, and there exists a function $w_2\in \mathcal K$ such that
        $$
            \|\nabla P(x)\|\le w_2(\|x-x^*\|)\text{ for all }x\in D.
            $$
      \end{itemize}
Then for any $\gamma>0$ there exists an $\bar\varepsilon>0$  such that  the point $x^*$ is asymptotically stable for system~\eqref{Sigma_x} with the controls $u_k=u_k^\varepsilon(a(x),t)$ given by~\eqref{cont}--\eqref{dithers} and any  $\varepsilon\in(0,\bar \varepsilon)$,
provided that the solutions of the closed-loop system~\eqref{Sigma_x},~\eqref{cont}--\eqref{dithers} are defined in the sense of Definition~\ref{def_pi}.
  \end{theorem}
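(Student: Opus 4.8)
The plan is to obtain attractivity of $x^*$ as a direct consequence of Theorem~\ref{thm_step1Px} and to establish Lyapunov stability by a separate argument that combines conditions~\ref{thm_step1stab}.1)--\ref{thm_step1stab}.2) with the monotonicity of $P$ along the sampling instants of a $\pi_\varepsilon$-solution. First I would check the hypotheses of Theorem~\ref{thm_step1Px}. The lower bound in \ref{thm_step1stab}.1) gives $P(x)-m_P\ge w_{11}(\|x-x^*\|)$, so that for every $x^0\in D$ one has $\mathcal L^{P,P(x^0)}\subseteq\{x:\|x-x^*\|\le w_{11}^{-1}(P(x^0)-m_P)\}\subset D$; since this ball is compact and $\mathcal L^{P,P(x^0)}$ is closed, the level sets of $P$ are compact. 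The sandwich in \ref{thm_step1stab}.1) evaluated at $x=x^*$ forces $P(x^*)=m_P=\min_D P$, attained only at $x^*$, while \ref{thm_step1stab}.2) makes $x^*$ the unique critical point of $P$; since $x^*\in S^*_{\min}$ this yields $\{x\in D:\nabla P(x)=0\}\setminus S^*_{\min}=\emptyset$ and $S^*_{P_{\min}}=\{m_P\}$. Hence Theorem~\ref{thm_step1Px} applies: for the prescribed $\gamma>0$ there is $\bar\varepsilon>0$ such that, for every $\varepsilon\in(0,\bar\varepsilon]$, the $\pi_\varepsilon$-solution is well defined and $P(x_\pi(t))\to m_P$ as $t\to+\infty$, with no exceptional set of initial points. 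Combining this with $w_{11}(\|x_\pi(t)-x^*\|)\le P(x_\pi(t))-m_P\to 0$ and the continuity of $w_{11}^{-1}$ at $0$ gives $x_\pi(t)\to x^*$, i.e.\ attractivity.

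The substantive part is Lyapunov stability, which I would prove on a fixed compact neighborhood. Choose $r_0>0$ with $\overline{B}_{r_0}(x^*)\subset D$, restrict attention to $\eta\in(0,r_0)$, take $\bar\varepsilon$ to be the threshold furnished by Theorem~\ref{thm_step1Px} (equivalently Lemma~\ref{lem_step1}) for the compact set $\overline{B}_{r_0}(x^*)$, and fix $\varepsilon\in(0,\bar\varepsilon)$. Two ingredients are needed. The first is the sampled monotonicity $P(x_\pi(t_{j+1}))\le P(x_\pi(t_j))$ along $t_j=t_0+\varepsilon j$, valid as long as $x_\pi(t_j)\in\overline{B}_{r_0}(x^*)$; this comes out of the proof of Theorem~\ref{thm_step1Px}. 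The second is an intra-interval excursion estimate: on $I_j$ the control~\eqref{cont}--\eqref{dithers} uses the frozen value $a(x_\pi(t_j),t_j)=-\gamma\,\mathcal F^{-1}(x_\pi(t_j),t_j)\nabla P(x_\pi(t_j))$, so Assumption~\ref{as_rank}.2) and condition~\ref{thm_step1stab}.2) give $\|a(x_\pi(t_j),t_j)\|\le\gamma\,{M_F}\,w_2(\|x_\pi(t_j)-x^*\|)$, and integrating~\eqref{Sigma_x} over $I_j$ with $\|f_k\|\le M_f$ produces a bound $\|x_\pi(t)-x_\pi(t_j)\|\le\beta\big(\|x_\pi(t_j)-x^*\|,\varepsilon\big)$ for $t\in I_j$, where $\beta(\cdot,\varepsilon)$ is nondecreasing, continuous, and $\beta(0,\varepsilon)=0$.

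Given $\eta$, I would then pick $\delta\in(0,r_0)$ so small that $w_{11}^{-1}(w_{12}(\delta))+\beta\big(w_{11}^{-1}(w_{12}(\delta)),\varepsilon\big)<\eta$, which is possible since $w_{12}(\delta)\to0$ as $\delta\to0$ and $w_{11}^{-1},\beta(\cdot,\varepsilon)$ are continuous and vanish at $0$. If $\|x^0-x^*\|<\delta$, then $P(x^0)-m_P\le w_{12}(\delta)$; arguing inductively over $j$ and using the monotonicity above one gets $P(x_\pi(t_j))-m_P\le w_{12}(\delta)$, hence $\|x_\pi(t_j)-x^*\|\le w_{11}^{-1}(w_{12}(\delta))$, and on each interval $I_j$ the excursion bound gives $\|x_\pi(t)-x^*\|\le w_{11}^{-1}(w_{12}(\delta))+\beta(w_{11}^{-1}(w_{12}(\delta)),\varepsilon)<\eta$. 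This keeps the whole trajectory in $\overline{B}_\eta(x^*)\subset\overline{B}_{r_0}(x^*)$ for all $t\ge t_0$, which legitimizes the constants and the monotonicity used above and establishes Lyapunov stability of $x^*$; together with the attractivity from the first paragraph, asymptotic stability follows.

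I expect the main obstacle to be precisely the stability argument: producing an excursion bound $\beta$ that is uniform in $t_0$ and $j$ and tends to zero as the initial distance to $x^*$ shrinks, and closing the induction so that the trajectory provably never escapes the compact set $\overline{B}_{r_0}(x^*)$ on which the approximation estimates and the sampled monotonicity of $P$ are guaranteed. By contrast, the attractivity part is an almost immediate corollary of Theorem~\ref{thm_step1Px} once the level-set compactness and the structure of the critical set of $P$ are checked.
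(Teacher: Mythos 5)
Your proposal is correct and follows essentially the same route as the paper's proof in Appendix~D: attractivity is deduced from Theorem~\ref{thm_step1Px} after checking compactness of level sets and that $x^*$ is the only critical point, and Lyapunov stability is obtained by combining the sampled monotonicity $P(x_\pi(t_j))\le P(x^0)$ (giving $\|x_\pi(t_j)-x^*\|\le w_{11}^{-1}(w_{12}(\|x^0-x^*\|))$) with the intra-interval excursion estimate from~\eqref{xx0}, which is exactly your $\beta$ with the explicit form $\sqrt{\varepsilon\gamma M_F\,\|\nabla P(x_\pi(t_j))\|}\,e^{L_{fx}\varepsilon}c_u M_f$ and hence $\sqrt{M_F\,w_2(\|x_\pi(t_j)-x^*\|)}\,e^{L_{fx}\bar\varepsilon}c_u M_f$ after using~\ref{thm_step1stab}.2). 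The "obstacle" you flag is indeed where the content lies, but it is resolved precisely by the paper's Lemma~\ref{lemma_x}/\eqref{xx0} machinery that you implicitly invoke.
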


  The proof of this  theorem is based on the proofs of Lemma~\ref{lem_step1} and Theorem~\ref{thm_step1Px} (see Appendix~\ref{proof_step1stab}). The following result directly follows from Theorem~\ref{thm_step1stab} and Corollary~\ref{cor1_step2}:

\begin{corollary}\label{thm_step1rate}
\quad

      Given system~\eqref{Sigma_x} with $f_k\in C^2(D;\mathbb R^n)$ satisfying Assumption~\ref{as_rank} in a {domain} $D\subseteq\mathbb R^n$ and
        a point $x^*\in D$, let a function $P\in C^2(D;\mathbb R)$ satisfy the following conditions:
      \begin{itemize}
        \item \ref{thm_step1rate}.1) there exist constants $\omega_{11},\omega_{12},v_1,v_2>0$ such that
            $$
          \omega_{11}\|x-x^*\|^{v_1}\le  P(x)-m_P\le \omega_{12}\|x-x^*\|^{v_2}\text{ for all }x\in D;
            $$
        \item \ref{thm_step1rate}.2) there exist constants $\mu_1,\mu_2>0$ and $\nu_1,\nu_2\ge 1$ such that
        $$
       \mu_1 (P(x)-m_P)^{\nu_1}\le \|\nabla P(x)\|^2\le \mu_2 (P(x)-m_P)^{\nu_2}\text{ for all }x\in D.
        $$
      \end{itemize}
Then for any $\gamma>0$ there exists an $\bar\varepsilon>0$  such that  the point $x^*$ is asymptotically stable for the closed-loop system~\eqref{Sigma_x} with the controls $u_k=u_k^\varepsilon(a(x),t)$ given by~\eqref{cont}--\eqref{dithers}
and any  $\varepsilon\in(0,\bar \varepsilon)$, 
provided that the solutions of the closed-loop system are defined in the sense of Definition~\ref{def_pi}. Moreover,
 \begin{itemize}
    \item [I)] If $\nu_1=1$, then $x^*$ is exponentially stable; namely, for any $\gamma>\gamma^*>0$, there exists an $\varepsilon>0$ such that
  $$
\|x_\pi(t)-x^*\|\le \beta\|x^0-x^*\|^{\frac{v_2}{v_1}}e^{-\frac{\mu_1\gamma^*}{v_1}(t-t_0-\varepsilon)}\text{ for all }t\ge t_0,$$
where $\beta=\left(\frac{\omega_{12}}{\omega_{11}}\right)^{\frac{1}{v_1}}$.
    \item [II)] If $\nu_1>1$, then $x^*$ is polynomially stable, namely, for any $\gamma^*>0$ and $\gamma>\gamma^*$ there exists an $\varepsilon>0$ such that
$$
\|x_\pi(t)-x^*\|\le \left(\beta_1\|x^0-x^*\|^{v_2(1-\nu_1)}+\beta_2(t-t_0-\varepsilon)\right)^{\frac{1}{v_1(1-\nu_1)}}\text{ for all }t\ge t_0,
$$
where $\beta_1=\left(\dfrac{\omega_{12}}{\omega_{11}}\right)^{1-\nu_1}$, $\beta_2=\dfrac{\mu_1\gamma^*(\nu_1-1)}{\omega_{11}^{1-\nu_1}}$.
  \end{itemize}
  \end{corollary}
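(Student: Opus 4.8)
The plan is to recognize the polynomial hypotheses \ref{thm_step1rate}.1--\ref{thm_step1rate}.2 as special cases of the hypotheses of Theorem~\ref{thm_step1stab} and Corollary~\ref{cor1_step2}, to quote the decay estimates those results provide for $P(x_\pi(t))-m_P$, and then to convert them into estimates for $\|x_\pi(t)-x^*\|$ by means of the two-sided polynomial bound \ref{thm_step1rate}.1.

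First I would check the hypotheses of Theorem~\ref{thm_step1stab}. Condition \ref{thm_step1rate}.1 yields \ref{thm_step1stab}.1 with the class-$\mathcal K$ functions $w_{11}(s)=\omega_{11}s^{v_1}$ and $w_{12}(s)=\omega_{12}s^{v_2}$ (the containment of the corresponding sublevel balls in $D$ being part of the standing hypothesis on $D$). For \ref{thm_step1stab}.2, the lower estimate in \ref{thm_step1rate}.2 together with \ref{thm_step1rate}.1 forces $\|\nabla P(x)\|=0$ exactly when $P(x)=m_P$, i.e.\ exactly when $x=x^*$, while the upper estimate in \ref{thm_step1rate}.2 combined with \ref{thm_step1rate}.1 gives $\|\nabla P(x)\|^2\le\mu_2(P(x)-m_P)^{\nu_2}\le\mu_2\omega_{12}^{\nu_2}\|x-x^*\|^{v_2\nu_2}$, so \ref{thm_step1stab}.2 holds with $w_2(s)=\sqrt{\mu_2}\,\omega_{12}^{\nu_2/2}s^{v_2\nu_2/2}\in\mathcal K$. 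Theorem~\ref{thm_step1stab} then yields, for every $\gamma>0$, an $\bar\varepsilon>0$ such that $x^*$ is asymptotically stable for all $\varepsilon\in(0,\bar\varepsilon)$.

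Next I would apply Corollary~\ref{cor1_step2}. Its underlying Theorem~\ref{thm_step1Px} needs compactness of $\mathcal L^{P,P(x^0)}$, which follows from the lower bound in \ref{thm_step1rate}.1 (it confines the sublevel set to a Euclidean ball centred at $x^*$), continuity of $P$, and the containment of that ball in $D$; the additional requirement of Corollary~\ref{cor1_step2} is precisely the lower bound in \ref{thm_step1rate}.2 with $\mu=\mu_1$, $\nu=\nu_1\ge 1$. Hence, for any $\gamma>\gamma^*>0$ there is an $\bar\varepsilon>0$ such that, for $\varepsilon\in(0,\bar\varepsilon)$, the $\pi_\varepsilon$-solution is well-defined and satisfies $P(x_\pi(t))-m_P\le(P(x^0)-m_P)e^{-\mu_1\gamma^*(t-t_0-\varepsilon)}$ when $\nu_1=1$, resp.\ $P(x_\pi(t))-m_P\le\big((P(x^0)-m_P)^{1-\nu_1}+\mu_1\gamma^*(\nu_1-1)(t-t_0-\varepsilon)\big)^{1/(1-\nu_1)}$ when $\nu_1>1$; replacing $\bar\varepsilon$ by the minimum of the two values obtained so far makes all assertions hold simultaneously.

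It remains to pass to the state norm. From \ref{thm_step1rate}.1 one has $\|x_\pi(t)-x^*\|\le\big((P(x_\pi(t))-m_P)/\omega_{11}\big)^{1/v_1}$ and $P(x^0)-m_P\le\omega_{12}\|x^0-x^*\|^{v_2}$. In case $\nu_1=1$ this chain gives directly $\|x_\pi(t)-x^*\|\le(\omega_{12}/\omega_{11})^{1/v_1}\|x^0-x^*\|^{v_2/v_1}e^{-\frac{\mu_1\gamma^*}{v_1}(t-t_0-\varepsilon)}$, i.e.\ the claimed bound with $\beta=(\omega_{12}/\omega_{11})^{1/v_1}$; in particular $\|x_\pi(t)-x^*\|\to 0$ exponentially. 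In case $\nu_1>1$ some care with signs is needed: since $1-\nu_1<0$, from $P(x^0)-m_P\le\omega_{12}\|x^0-x^*\|^{v_2}$ we get $(P(x^0)-m_P)^{1-\nu_1}\ge\omega_{12}^{1-\nu_1}\|x^0-x^*\|^{v_2(1-\nu_1)}$, then raising the resulting sum to the negative power $1/(1-\nu_1)$ reverses the inequality once more, and using $\omega_{11}^{-1/v_1}=\big(\omega_{11}^{\nu_1-1}\big)^{1/(v_1(1-\nu_1))}$ to absorb the prefactor produces exactly $\|x_\pi(t)-x^*\|\le\big(\beta_1\|x^0-x^*\|^{v_2(1-\nu_1)}+\beta_2(t-t_0-\varepsilon)\big)^{1/(v_1(1-\nu_1))}$ with $\beta_1=(\omega_{12}/\omega_{11})^{1-\nu_1}$ and $\beta_2=\mu_1\gamma^*(\nu_1-1)/\omega_{11}^{1-\nu_1}$. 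I expect the only genuinely error-prone step to be this last one --- keeping track of the several inequality reversals caused by the negative exponents $1-\nu_1$ and $1/(1-\nu_1)$; the rest is a routine specialization of results already established.
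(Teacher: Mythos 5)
Your proposal is correct and follows exactly the route the paper intends: the paper states only that the corollary ``directly follows from Theorem~\ref{thm_step1stab} and Corollary~\ref{cor1_step2},'' and you have spelled out precisely that verification --- specializing the class-$\mathcal K$ hypotheses of Theorem~\ref{thm_step1stab} to the polynomial bounds, invoking Corollary~\ref{cor1_step2} for the decay of $P(x_\pi(t))-m_P$, and then converting via \ref{thm_step1rate}.1, with the sign reversals from the negative exponents $1-\nu_1$ and $1/(1-\nu_1)$ handled correctly.
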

In particular, to exponentially stabilize system~\eqref{Sigma_x} at $x^*$, one can simply put
$$P(x)=\|x-x^*\|^2.$$
The above-stated decay rate estimates are illustrated with numerical examples in Section~\ref{sec_exmpls1_stab}.

\begin{remark}
   It is interesting to note that for the degree 1 nonholomonic systems, i.e. for the case $m=n$, $S_1=\{1,\dots,n\}$, the proposed stabilizing controls are time-invariant functions $$u_i^\varepsilon(x,t)=u_i(x)=-  \big(f_1(x)\  f_2(x)\ \dots\ f_n(x)\big)^{-1} \nabla P(x),$$
    which is  the classical control design for stabilization of fully-actuated driftless control-affine systems.
 \end{remark}

 \begin{remark}
   The proposed control algorithm~\eqref{cont}--\eqref{dithers} significantly simplifies the stabilizing control design procedure introduced in~\cite{ZuSIAM} and makes it possible to express control coefficients explicitly without solving a cumbersome system of algebraic equations.
 \end{remark}

  \subsection{Trajectory tracking problem}
  \label{sec_tracking}

  The proposed control design procedure with a time-varying function $P(x,t)$ can be used for ensuring the motion of system~\eqref{Sigma} along desirable curves. Note that we consider arbitrary continuous curves $x^*(t)$ which may not be feasible for system~\eqref{Sigma}. Consequently, we consider a relaxed problem statement for the \emph{approximate} trajectory tracking as follows:
  \begin{problem}[Trajectory tracking problem]\label{prob_tracking}

Given system~\eqref{Sigma}, a continuous curve $x^*:\mathbb R^+\to D$, and a constant $\rho>0$, the goal is to construct a feedback law ensuring the attractivity of the family of sets
\begin{equation}
\mathcal L^\rho_t=\{x\in D: \|x-x^*(t)\|\le\rho\}_{t\ge0}.
\label{Lsets}
\end{equation}
for the corresponding closed-loop system.
\end{problem}

Note that attracting (locally/globally pullback attracting) families of time-varying sets have been studied in the paper~\cite{langa2002} for non-autonomous systems of ordinary differential equations. Here we treat this notion in the sense of $\pi_\varepsilon$-solutions (Definition~\ref{def_pi}) for system~\eqref{Sigma} with control inputs. To be precise, we introduce the following definition.

\begin{definition}[Attracting family of sets in the sense of $\pi_\varepsilon$-solutions]~\label{def_attr}
Let a feedback control of the form~\eqref{cont}--\eqref{dithers} be given, and let $\rho>0$.
We call the family of sets~\eqref{Lsets} {\em attracting} for the closed-loop system~\eqref{Sigma},~\eqref{cont}--\eqref{dithers}, if
there exist $\Delta>0$, $\bar \gamma>0$, and $\bar \varepsilon:[\bar \gamma,+\infty)\to \mathbb R_{>0}$ such that, for any $t_0\ge 0$, $x^0\in B_{\Delta}(\mathcal L^{\rho}_{t_0})\cap D$, $\gamma\ge \bar \gamma$, $\varepsilon\in (0,\bar \varepsilon(\gamma)]$,
the corresponding $\pi_\varepsilon$-solution $x_\pi(t)$ satisfying the initial condition $x_\pi(t_0)=x^0$ is well-defined and
$$
{\rm dist}(x_\pi(t),\mathcal L^{\rho}_{t}) \to 0\quad\text{as}\;\;t\to + \infty.
$$
\end{definition}

Based on Theorem~\ref{thm_step1Pxt}, we are in a position to state sufficient conditions for the solvability of Problem~\ref{prob_tracking}.
\begin{theorem}\label{thm_step1track}
\quad

Given system~\eqref{Sigma}, a continuous curve $x^*: \mathbb R^+\to D$, and a function $P:D\times\mathbb R^+\to\mathbb R$,  let Assumptions~\ref{as_f}--\ref{as_Psets} be satisfied, and assume that the following conditions hold:
   \begin{itemize}
        \item \ref{thm_step1track}.1) there exist constants $\omega_{11},\omega_{12},v_1,v_2>0$ such that
            $$
          \omega_{11}\|x-x^*(t)\|^{v_1}\le  P(x,t)-m_P\le \omega_{12}\|x-x^*(t)\|^{v_2}\text{ for all }t\ge 0,\,x\in D;
            $$
        \item \ref{thm_step1track}.2) there exist constants $\mu_1,\mu_2>0$ and $\nu_1,\nu_2\ge 1$ such that
        $$
       \mu_1 (P(x,t)-m_P)^{\nu_1}\le \|\nabla_x P(x,t)\|^2\le \mu_2 (P(x,t)-m_P)^{\nu_2}\text{ for all }t\ge 0,\,x\in D.
        $$
      \end{itemize}
  Then, for any $\rho>0$, the family of sets
  $\mathcal L^{\rho}_t=\{x\in D: \|x-x^*(t)\|\le\rho\}_{t\ge0}$
is attracting for the closed-loop system~\eqref{Sigma} with the controls $u_k=u_k^\varepsilon(a(x,t),t)$ given by~\eqref{cont}--\eqref{dithers}
in the sense of Definition~\ref{def_attr}.
Moreover, one of the following assertions holds for any $\gamma>\gamma^*\ge \bar \gamma$, $\varepsilon\in (0,\bar \varepsilon(\gamma)]$, and $x^0\in B_{\Delta}(\mathcal L^{\rho}_{t_0})\cap D$:
 \begin{itemize}
    \item [I)] if $\nu_1=1$, then   $\{\mathcal L^{\rho}_t\}_{t\ge0}$ is exponentially attractive, i.e.
  $$
\|x_\pi(t)-x^*\|\le \beta\|x^0-x^*\|^{\frac{v_2}{v_1}}e^{-\frac{\mu_1\gamma^*}{v_1}(t-t_0-\varepsilon)}+\rho \text{ for all }t\ge t_0,$$
where $\beta=\left(\frac{\omega_{12}}{\omega_{11}}\right)^{\frac{1}{v_1}}$;
    \item [II)] if $\nu_1>1$, then   $\{\mathcal L^{\rho_1}_t\}_{t\ge0}$ is polynomially attractive, i.e.
$$
\|x_\pi(t)-x^*\|\le \left(\beta_1\|x^0-x^*\|^{v_2(1-\nu_1)}+\beta_2(t-t_0-\varepsilon)\right)^{\frac{1}{v_1(1-\nu_1)}}+\rho \text{ for }t\ge t_0,
$$
where $\beta_1=\left(\dfrac{\omega_{12}}{\omega_{11}}\right)^{1-\nu_1}$ and $\beta_2=\dfrac{\mu_1\gamma^*(\nu_1-1)}{\omega_{11}^{1-\nu_1}}$.
\end{itemize}
  \end{theorem}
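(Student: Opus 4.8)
The plan is to reduce the assertion to Theorem~\ref{thm_step1Pxt} by reading the tracking error through the potential $P$. Fix the target accuracy $\rho>0$ from Problem~\ref{prob_tracking} and introduce an auxiliary number $\rho'>0$, to be chosen small; $\rho'$ will play the role of the constant ``$\rho$'' in Theorem~\ref{thm_step1Pxt}. First I would verify the hypotheses of that theorem. Assumptions~\ref{as_f}--\ref{as_Pbounds} are assumed, and the gradient-domination inequality~\eqref{Pgrad} with $\mu=\mu_1$, $\nu=\nu_1$ is exactly the left-hand inequality in condition~\ref{thm_step1track}.2. It remains to produce $\rho'$ with $\emptyset\neq\mathcal L_t^{P,m_P+\rho'}\subset D$ for all $t\ge0$. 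Setting $x=x^*(t)$ in condition~\ref{thm_step1track}.1 gives $0\le P(x^*(t),t)-m_P\le\omega_{12}\cdot0=0$, so $P(x^*(t),t)=m_P$ (using also Assumption~\ref{as_Pbounds}.1); hence $x^*(t)\in\mathcal L_t^{P,m_P+\rho'}\neq\emptyset$, while the lower bound in~\ref{thm_step1track}.1 gives $\mathcal L_t^{P,m_P+\rho'}\subseteq\overline{B_{r'}(x^*(t))}$ with $r'=(\rho'/\omega_{11})^{1/v_1}$. Applying Assumption~\ref{as_Psets} with a base point on the curve $x^*(\cdot)$ yields a $\lambda>0$ with $\mathcal L_t^{P,m_P+\lambda}\subset D$ for all $t\ge0$, so any $\rho'\le\lambda$ works.

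Next I would fix the size $\Delta>0$ of the capture neighborhood and invoke Theorem~\ref{thm_step1Pxt}. For $x^0\in B_\Delta(\mathcal L^\rho_{t_0})\cap D$ one has $\|x^0-x^*(t_0)\|\le\rho+\Delta$, hence $P(x^0,t_0)-m_P\le\omega_{12}(\rho+\Delta)^{v_2}$ by the upper bound in~\ref{thm_step1track}.1. Choosing $\Delta$ small enough that this quantity (together with the slack $\lambda$) keeps one inside the level set furnished by Assumption~\ref{as_Psets}, the sublevel sets $\{\mathcal L_t^{P,P(x^0,t_0)+\lambda}\}$ form, for every such $x^0$ and $t_0$, a compact family contained in $D$ to which all uniform constants of Assumptions~\ref{as_f}--\ref{as_Pbounds} and of~\eqref{Pgrad} refer, and in which the $\pi_\varepsilon$-solution remains confined (cf.\ Lemma~\ref{lem_step1}). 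Theorem~\ref{thm_step1Pxt}, applied with initial datum $x^0$ and accuracy $\rho'$, then furnishes $\bar\gamma>\gamma^*$ and, for every $\gamma>\bar\gamma$ and $\varepsilon\in(0,\bar\varepsilon(\gamma)]$, a well-defined $\pi_\varepsilon$-solution $x_\pi$ with $x_\pi(t_0)=x^0$ satisfying
$$
P(x_\pi(t),t)-m_P\le(P(x^0,t_0)-m_P)e^{-\mu_1\gamma^*(t-t_0-\varepsilon)}+\rho'\quad\text{if }\nu_1=1,
$$
and the corresponding polynomial estimate of part~II) if $\nu_1>1$.

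It remains to transcribe these into error bounds. The lower bound in~\ref{thm_step1track}.1 gives $\omega_{11}\|x_\pi(t)-x^*(t)\|^{v_1}\le P(x_\pi(t),t)-m_P$, and the upper bound gives $P(x^0,t_0)-m_P\le\omega_{12}\|x^0-x^*(t_0)\|^{v_2}$. Substituting the latter into the right-hand side of the $P$-estimate, dividing by $\omega_{11}$ (in the case $\nu_1>1$ using monotonicity of $s\mapsto s^{1-\nu_1}$ and $s\mapsto s^{1/(1-\nu_1)}$ to carry $\omega_{11}$ and $\omega_{12}$ inside), and extracting the $v_1$-th root yields bounds~I) and~II) with $\beta=(\omega_{12}/\omega_{11})^{1/v_1}$, $\beta_1=(\omega_{12}/\omega_{11})^{1-\nu_1}$, $\beta_2=\mu_1\gamma^*(\nu_1-1)/\omega_{11}^{1-\nu_1}$; the residual $\rho'$-term is absorbed into the additive $\rho$ once $\rho'\le\omega_{11}\rho^{v_1}$ (with an extra universal factor if $0<v_1<1$, which is harmless since $\rho'$ is at our disposal). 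Since the transient term decays to $0$, this already gives $\limsup_{t\to\infty}\|x_\pi(t)-x^*(t)\|\le\rho$ — in fact $<\rho$ if $\rho'$ is taken strictly smaller — so ${\rm dist}(x_\pi(t),\mathcal L^\rho_t)\to0$, i.e.\ $\{\mathcal L^\rho_t\}$ is attracting in the sense of Definition~\ref{def_attr}. The upper estimates in~\ref{thm_step1track}.1 and~\ref{thm_step1track}.2 are not needed for attractivity as such; they serve only to make the decay-rate formulas explicit.

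The step I expect to be the main obstacle is the uniform bookkeeping of constants: selecting $\rho'$ and $\Delta$ independently of $x^0$ and $t_0$ so that one compact family of sublevel sets simultaneously contains every admissible initial point, stays inside $D$ for all $t\ge t_0$, and supports the bounds of Assumptions~\ref{as_f}--\ref{as_Pbounds} and~\eqref{Pgrad} — this is precisely where Assumption~\ref{as_Psets} and the two-sided bound~\ref{thm_step1track}.1 are used. Everything after that is a direct application of Theorem~\ref{thm_step1Pxt}.
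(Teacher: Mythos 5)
Your proposal is correct and follows essentially the paper's own route: the paper explicitly states that Theorem~\ref{thm_step1track} is ``based on Theorem~\ref{thm_step1Pxt}'' and that ``the proof is similar to the proof of Theorem~\ref{thm_step1stab}'', which is precisely what you do — apply Theorem~\ref{thm_step1Pxt} with $\mu=\mu_1$, $\nu=\nu_1$, and then translate the $P$-decay into error bounds via the two-sided inequality~\ref{thm_step1track}.1. You also correctly identify the main bookkeeping point (choosing $\rho'$ and $\Delta$ uniformly via Assumption~\ref{as_Psets} and the lower bound in~\ref{thm_step1track}.1), and your caveat about the harmless adjustment when $v_1<1$ is a fair observation that the paper itself glosses over.
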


The proof is similar to the proof of Theorem~\ref{thm_step1stab}.

\begin{corollary}\label{cor_tracking}
\quad

  Given system~\eqref{Sigma_x} with $f_k\in C^2(D;\mathbb R^n)$ satisfying Assumption~\ref{as_rank} in a domain $D\subseteq\mathbb R^n$,
  let a curve $x^*:\mathbb R^+ \to D$ be Lipschitz continuous such that $B_\delta(x^*(t))\subset D$ for all $t\ge 0$ with some $\delta>0$.

  Then, for any $\rho>0$,
  the family of sets
  $\mathcal L^{\rho}_t=\{x\in D: \|x-x^*(t)\|\le\rho\}_{t\ge0}$
is (exponentially) attracting for the closed-loop system~\eqref{Sigma_x} with the controls $u_k=u_k^\varepsilon(a(x,t),t)$ given by~\eqref{cont}--\eqref{dithers}
in the sense of Definition~\ref{def_attr}.
  \end{corollary}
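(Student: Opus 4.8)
The plan is to deduce the corollary from Theorem~\ref{thm_step1track} applied to the quadratic potential
\[
P(x,t)=\|x-x^*(t)\|^2,\qquad (x,t)\in D\times\mathbb R^+,
\]
i.e.\ the natural time-varying analogue of the choice $P(x)=\|x-x^*\|^2$ used above for exponential stabilization. The bulk of the work is then to check that this $P$, together with the time-independent vector fields $f_k\in C^2(D;\mathbb R^n)$, fulfils Assumptions~\ref{as_f}--\ref{as_Psets} and the growth bounds~\ref{thm_step1track}.1)--\ref{thm_step1track}.2); once this is done, the claimed (exponential) attractivity is precisely case~I of Theorem~\ref{thm_step1track}.

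I would carry out the verifications along the closed tube $\widetilde{\mathcal D}_t=\overline{B_\delta(x^*(t))}$, which lies in $D$ by hypothesis. Assumption~\ref{as_f}: since the $f_k$ are $t$-independent, all $t$-regularity is trivial and $L_{ft}=H_{ft}=0$, while the bounds~\ref{as_f}.1)--\ref{as_f}.3) and the Lipschitz estimate follow from $f_k\in C^2$ on each $\widetilde{\mathcal D}_t$. Assumption~\ref{as_rank} is assumed, and~\ref{as_rank}.2) on the tube yields the bound on $\|\mathcal F^{-1}\|$ needed to make the controls~\eqref{cont}--\eqref{dithers} well defined. Assumption~\ref{as_Pbounds}: $m_P=0\le P$, $\nabla_xP(x,t)=2(x-x^*(t))$ is bounded on the tube, items~\ref{as_Pbounds}.2)--\ref{as_Pbounds}.3) hold with the $t$-Lipschitz constants controlled by the Lipschitz constant of $x^*$, and the Hessian equals $2I$, so~\ref{as_Pbounds}.4) holds with $H_{Px}=2n$. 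Assumption~\ref{as_Psets}: every sublevel set $\{x:P(x,t)\le c\}$ is the closed ball $\overline{B_{\sqrt c}(x^*(t))}$ --- nonempty, compact, convex --- while $\{x:\|\nabla_xP(x,t)\|\le c\}=\overline{B_{c/2}(x^*(t))}$; hence, for small enough $\lambda$ and gradient threshold, the required nesting inside $B_\delta(x^*(t))\subset D$ holds, and since Definition~\ref{def_attr} restricts to $x^0\in B_\Delta(\mathcal L^{\rho}_{t_0})\cap D$, choosing $\Delta,\lambda$ with $(\rho+\Delta)^2+\lambda<\delta^2$ makes this uniform in $t_0$. Finally~\ref{thm_step1track}.1) is the identity $P=\|x-x^*(t)\|^2$ ($\omega_{11}=\omega_{12}=1$, $v_1=v_2=2$) and~\ref{thm_step1track}.2) follows from $\|\nabla_xP\|^2=4\|x-x^*(t)\|^2=4(P-m_P)$ ($\mu_1=\mu_2=4$, $\nu_1=\nu_2=1$).

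Theorem~\ref{thm_step1track} then gives that $\{\mathcal L^{\rho}_t\}_{t\ge0}$ is attracting in the sense of Definition~\ref{def_attr}, and, as $\nu_1=1$, case~I provides the exponential bound
\[
\|x_\pi(t)-x^*(t)\|\le\|x^0-x^*(t_0)\|\,e^{-2\gamma^*(t-t_0-\varepsilon)}+\rho,\qquad t\ge t_0 .
\]
The one genuine difficulty I expect is \emph{uniformity in $t_0\ge0$}: the constants $M_f,L_{fx},L_{2f},H_{fx},M_F$ and the radii $\Delta,\lambda$ must be fixed once and for all along the (possibly unbounded) tube $\overline{B_\delta(x^*(\cdot))}$, which need not be contained in a single compact set, and for $x^0\in D$ far from the curve the sublevel ball of $P$ may even fail to lie in $D$. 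Handling this requires reading~\ref{as_rank}.2) as a $t$-uniform bound on $\|\mathcal F^{-1}\|$ over the tube, using $B_\delta(x^*(t))\subset D$ for all $t$, and --- when $D\neq\mathbb R^n$ or $x^*(\cdot)$ is unbounded --- appealing to the relaxations in Corollary~\ref{cor1_step1} and to the fact that Definition~\ref{def_attr} is a local notion; the remaining estimates are routine.
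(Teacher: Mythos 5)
Your proposal is correct and coincides with the route the paper has in mind: take the quadratic potential $P(x,t)=\|x-x^*(t)\|^2$ and apply Theorem~\ref{thm_step1track} (case $\nu_1=1$), after verifying Assumptions~\ref{as_f}--\ref{as_Psets} and conditions~\ref{thm_step1track}.1)--\ref{thm_step1track}.2) on the tube around $x^*(\cdot)$. The paper itself only records that a close variant was proved in~\cite{GZ19} for $C^1$ curves with bounded derivative, so your write-up actually supplies more detail than the text does; in particular, your observations that $m_P=0$, $\mu_1=\mu_2=4$, $\nu_1=\nu_2=1$, $\omega_{11}=\omega_{12}=1$, $v_1=v_2=2$, the Hessian is $2I$, the sublevel sets are closed balls, and that the Lipschitz continuity of $x^*$ yields $L_{Pt}$, $L_{2Pt}$ are exactly the right checks, and you correctly flag that the local character of Definition~\ref{def_attr} (the restriction to $x^0\in B_\Delta(\mathcal L^\rho_{t_0})\cap D$) together with $B_\delta(x^*(t))\subset D$ is what reconciles Assumption~\ref{as_Psets} with a possibly small~$D$.
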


The above result has been proved in~\cite{GZ19} for continuously differentiable $x^*(t)$ with bounded first derivative.

  \subsection{Obstacle avoidance problem}
  \label{sec_obstacle}

Another important problem which can be solved by the proposed approach is generating collision-free motion of system~\eqref{Sigma_x} in environments with obstacles.
To formulate such problem, assume that the set $D$ is represented as a closed bounded domain with ``holes'', i.e.
$$
D=\mathcal W\setminus\bigcup_{j=1}^N\mathcal O_j,
$$
where $\mathcal W\subset \mathbb R^n$ is a closed bounded domain (workspace), and $\mathcal O_1,\mathcal O_2,..., \mathcal O_N\subset \mathcal W$ are open domains (obstacles).
The resulting set  $D$ is supposed to be \emph{valid}~\cite{Kod90}, i.e.
$\displaystyle
\overline{\mathcal O_i} \subset {\rm int}\, \mathcal W$ and $
\displaystyle\overline{\mathcal O_i} \cap \overline{\mathcal O_j} = \emptyset\;\;\text{if}\; \neq j$,
for all $i,j\in\{1,\dots,N\}$.
\begin{problem}[Obstacle avoidance problem]
Given system~\eqref{Sigma_x}, an initial point $x^0\in {\rm int}\, D$ and a destination point $x^*\in {\rm int}\, D$, the goal is to construct a feedback control such that the corresponding solution $x(t)$ of the closed-loop system~\eqref{Sigma_x}
with the initial data $x(0)=x^0$ satisfies the conditions:
\begin{itemize}
  \item collision-free motion: $ x(t)\in {\rm int}\, D\text{ for all }t\ge 0;$
  \item convergence to the target: $x(t)\to x^* \text{ as } t\to +\infty$.
\end{itemize}
\end{problem}
  As it is implied by Theorem~\ref{thm_step1Px}, the above problem can be solved by  the controls $u_k=u_k^\varepsilon(a(x,t),t)$ from~\eqref{cont}--\eqref{dithers} with a proper function $P\in C^2(D;\mathbb R)$ being such that its level sets  $\mathcal L^{P,P(x^0)}=\{x\in\mathbb R^n: P(x)\le P(x^0)\}$ are compact and $\mathcal L^{P,P(x^0)}\subset D$ for all $x^0\in D$ (see also~\cite{GZ18}). There is a broad range of potential functions ensuring collision-free motion for specific classes of systems, see, e.g.~\cite{Pat18}. Some of those functions can be used under our control-design framework for general classes of nonholonomic systems.
   As possible candidates for the function $P$, one can consider, e.g., the following:
   \begin{itemize}
     \item \emph{Navigation functions}. According to~\cite{Pat18}, a map $P\in C^2(D;[0,1])$ defined on a compact connected analytic manifold $D$ with boundary is a \emph{navigation function}, if it is: 1) polar at $x^*\in{\rm int} D$, i.e. has a unique minimum at $x^*$; 2)~Morse, i.e. its critical points on $D$ are nondegenerate; 3) admissible, i.e. all  boundary components have the same maximal value, namely $\partial D= P^{-1}(1)$.

         In particular, if $\mathcal W=\{x\in\mathbb R^n:\varphi_0(x)\ge0\}$ and $\mathcal O_i=\{x\in\mathbb R^n:\varphi_i(x)<0\}$, $i=\overline{1,N}$, with  convex functions  $\varphi_0,\varphi_i\in C^2(\mathbb R^n;\mathbb R)$, then the navigation function can be taken in the form
         \begin{equation}\label{P_nav}
           P(x)=\frac{\|x-x^*\|^2}{\big(\|x-x^*\|^{2K}+\varphi(x)\big)^{\frac{1}{K}}},\,\varphi(x)=\prod_{i=0}^N\varphi_i(x),
         \end{equation}
         provided that $K$ is large enough and, for all $x\in \partial O_i$, $i=\overline{1,N}$, $$\dfrac{\nabla\varphi_i(x)^\top(x-x^*)}{\|x-x^*\|^2}<c_{\varphi}^i,$$ where $c_{\varphi}^i$ is the minimal eigenvalue of the Hessian of $\varphi_i(x)$ (see~\cite{Pat18} for more details).
     \item \emph{Artificial potential fields}, which represent a combination of attractive and repulsive potential fields. In particular, one can take~\cite{Kha86}:
     \begin{equation}\label{P_art1}
       P(x)=\left\{
    \begin{aligned}
     &  \|x-x^*\|^2+K\left(\frac{1}{\varphi(x)}-\frac{1}{\varphi(\xi)}\right)^2&\text{ if }\varphi(x)\le \varphi(\xi),\\
     & K\|x-x^*\|^2&\text{ if }\varphi(x)> \varphi(\xi),
    \end{aligned}
       \right.
     \end{equation}
     where $K$ is a positive constant gain, $\xi$ belongs to a neighborhood of obstacles (see~\cite{Kha86} for more details). Another function of such type was proposed in~\cite{Va08}:
\begin{equation}\label{P_art2}
  P(x)= \|x-x^*\|^2\left(1+\frac{K}{\varphi(x)}\right),\quad K>0.
\end{equation}
   \end{itemize}

  We expect that a similar approach can be applied to time-varying navigation functions (or time-varying artificial potential fields) with the use of Lemma~\ref{lem_step1},
  i.e. in cases where either obstacles or destination point  are ``moving'', i.e. are given by time-dependent functions. We will illustrate this  claim via numerical examples in Section~\ref{sec_exmpls1}.
  However, the analysis of properties of such functions requires a separate study which we leave for future work.

  \section{Examples}
  \label{sec_exmpls1}

In this section, we will demonstrate the proposed control design approach on the mathematical model of a unicycle, which is a well-known example with the degree of nonholonomy 2.
The equations of motion have the form~\eqref{Sigma_x} with $n=3$, $m=2$, $f_1(x)=\big(\cos x_3,\sin x_3, 0\big)^\top$, $f_2(x)=\big(0,0,1\big)^\top$:
\begin{equation}\label{ex_uni}
  \begin{aligned}
    &\dot x_1=u_1\cos x_3,\\
    &\dot x_2=u_1\sin x_3,\\
    &\dot x_3=u_2.
  \end{aligned}
\end{equation}
Here $(x_1,x_2)$ denote the coordinates of the contact point of the unicycle wheel, $x_3$ is the angle between the wheel and the $x_1$-axis, $u_1$ and $u_2$ control the forward and the angular velocity, respectively.
It is easy to see that the vector fields of system~\eqref{ex_uni} satisfy Assumptions~\ref{as_f}--\ref{as_rank} in $D = \mathbb R^3$. In particular, Assumption~\ref{as_rank} holds with the set of indices $S_1=\{1,2\}$, $S_2=\{(1,2)\}$:
$$
 {\rm span}\big\{f_{1}(x),f_2(x), [f_{1},f_{2}](x)\big\}=\mathbb{R}^3 \;\text{ for all }x\in D=\mathbb R^3,
$$
so that the matrix
$$
\mathcal F(x)=\left(f_{1}(x)\ f_2(x)\ [f_{1},f_{2}](x)\right)
=
\left(
  \begin{array}{ccc}
    \cos x_3 & 0 & \sin x_3 \\
    \sin x_3 & 0 & -\cos x_3 \\
    0 & 1 & 0 \\
  \end{array}
\right)
$$
is nonsingular in $D$, and the corresponding inverse matrix
\begin{equation}\label{F1_uni}
  \mathcal F^{-1}(x)=
\left(
  \begin{array}{ccc}
    \cos x_3 &  \sin x_3 & 0 \\
   0 & 0 & 1 \\
     \sin x_3 & - \cos x_3 & 0 \\
  \end{array}
\right)
\end{equation}
has bounded norm for all $x\in D$.

According to the proposed control laws~\eqref{cont}, we take
\begin{equation}\label{cont_uni}
\begin{aligned}
&u_1= u^\varepsilon_1(a(x,t),t)=a_{1}(x,t) +2\sqrt{\frac{\pi|a_{12}(x,t)|}{\varepsilon}}\ {\rm sign}(a_{12}(x,t))\cos{\frac{2\pi t}{\varepsilon}},\\
&u_2= u^\varepsilon_2(a(x,t),t)=a_{2}(x,t) +2\sqrt{\frac{\pi|a_{12}(x,t)|}{\varepsilon}}\sin{\frac{2\pi t}{\varepsilon}}.
\end{aligned}
\end{equation}
In the above formulas, $\kappa_{12}$ is taken to be equal 1, and the vector of state-dependent coefficients $a(x,t)$ is defined by~\eqref{a1}:
$$
a(x,t)=\left(a_1(x,t)\ a_2(x,t)\ a_{12}(x,t)\right)^\top=-\gamma\mathcal F^{-1}(x,t)\nabla_x P(x,t),
$$
where $\gamma>0$ and $\varepsilon>0$ are control parameters, the matrix $\mathcal F^{-1}(x,t)$ is given by~\eqref{F1_uni}, and $P\in C^2(D\times\mathbb R;\mathbb R)$. Thus,
$$
\begin{aligned}
&a_1(x,t)=-\gamma\left(\frac{\partial P(x,t)}{\partial x_1}\cos x_3+\frac{\partial P(x,t)}{\partial x_2}\sin x_3\right),\\
&a_2(x,t)=-\gamma\frac{\partial P(x,t)}{\partial x_3},\\
&a_{12}(x,t)=-\gamma\left(\frac{\partial P(x,t)}{\partial x_1}\sin x_3-\frac{\partial P(x,t)}{\partial x_2}\cos x_3\right).
\end{aligned}
$$
Next, we will illustrate the behavior of solutions to system~\eqref{ex_uni},~\eqref{cont_uni} with different functions $P(x,t)$, depending on the control goal.
As it has been mentioned in Subsection~\ref{subs_defs}, the obtained control scheme can be used within the framework of sampling in the sense of Definition~\ref{def_pi}, and for classical solutions as well.
In the simulations below, we depict the trajectories of system~\eqref{ex_uni} with both types of solutions of the closed-loop system.

\subsection{Stabilization problem}\label{sec_exmpls1_stab}

We start with Problem~\ref{prob_stab} considered in Section~\ref{sec_stab}.
To exponentially stabilize system~\eqref{ex_uni} at an arbitrary $x^*\in\mathbb R^3$, one can take the simple quadratic function
\begin{equation}\label{P_stab_uni}
P(x)=\|x-x^*\|^2.
\end{equation}
According to Corollary~\ref{thm_step1rate}.I, the following decay rate estimate holds:
$$
\|x_\pi(t)-x^*\|\le \|x^0-x^*\|e^{-2\gamma(t-\varepsilon)}\text{ for all }t\ge 0.
$$
Fig.~\ref{fig_uni_stab} shows the trajectories of system~\eqref{ex_uni} for $x^*=(1,-1,\pi)^\top$, $\gamma=1$, $\varepsilon=0.1$, $x(0)=(0,0,0)^\top$.

To illustrate the polynomial decay rate estimate stated in Corollary~\ref{thm_step1rate}.II, consider the function
\begin{equation}\label{P_stab_uni}
P(x)=\|x-x^*\|^4.
\end{equation}
In this case,
$$
\|x_\pi(t)-x^*\|\le \left(\|x^0-x^*\|^{-2}+8\gamma(t-\varepsilon)\right)^{-1/2}\text{ for all }t\ge 0.
$$
Fig.~\ref{fig_uni_poly} illustrates the behavior of trajectories of system~\eqref{ex_uni} for $x^*=(\frac12,-\frac12,\frac{\pi}{2})^\top$,  $\gamma=1$, $\varepsilon=0.1$, $x(0)=(0,0,0)^\top$.

\subsection{Trajectory tracking}\label{sec_exmpls1_track}
For a given curve $x^*(t)\in \mathbb R^3$ on a finite time horizon $t\in [0,T]$,
we will illustrate solutions to the trajectory tracking problem (Problem~2) for system~\eqref{ex_uni} with controls of the form~\eqref{cont_uni} generated by the following potential function:
$$
P(x,t) = \|x-x^*(t)\|^2,\quad x\in{\mathbb R}^3,\; t\in [0,T].
$$

{\em Non-feasible curve.}
Consider the curve $x^*\in C^1([0,20\pi];\mathbb R^3)$: $$x^*(t)= (0.01x^*_{c,1}(0.1t),0.01x^*_{c,2}(0.1t),0)^\top, \; t\in [0,20\pi],$$
where the equations for $x^*_{c,1}(t)$ and $x^*_{c,2}(t)$ are given in~\cite{cat_curve}.
The classical and $\pi_\varepsilon$-solutions of system~\eqref{ex_uni} with the feedback control~\eqref{cont_uni} are shown in Fig.~\ref{fig_uni_cat}. For these simulations, we take
\begin{equation}\label{ini_params}
\varepsilon=0.25,\; \gamma=1, \; x(0)=(-4,0,0)^\top.
\end{equation}
Fig.~\ref{fig_uni_cat} presents considerable oscillations of the $x_1$ and $x_2$ solution components around their reference values $x_1^*(t)$ and $x_1^*(t)$.
Note that in this case the curve $x^*(t)$ is not feasible, i.e. $x=x^*(t)\in \mathbb R^3$, $t\in [0,20\pi]$ is not a solution of system~\eqref{ex_uni} under any choice of admissible controls $u_1$ and $u_2$.
Indeed, the only possibility to satisfy system~\eqref{ex_uni} with $x^*_3(t)\equiv 0$
is to have $x^*_2(t)\equiv {\rm const}$, which does not hold in the considered case.
We will show in the next simulation that the oscillations due to non-feasible character of the reference curve can be significantly reduced if $x^*(t)$ is a solution of the kinematic equations~\eqref{ex_uni}.

{\em Feasible curve.}
Consider now the feasible curve $x^*\in C^1([0,20\pi ];\mathbb R^3)$ such that $$x^*(t)=(x^*_1(t),x^*_2(t),x^*_{3}(t))^\top,\; x^*_i(t) = 0.01x^*_{c,i}(0.01t),\, i=1,2,\; \tan x^*_3(t) = \frac{\dot x^*_2}{\dot x^*_1}.$$
In this case
$x^*_{3}(t)$ satisfies system~\eqref{ex_uni} with $u_1=\pm\sqrt{(\dot x^*_1(t))^2+(\dot x^*_2(t))^2}$ and $u_2=\dot x^*_3(t)$.
To illustrate solutions of the trajectory tracking problem, we apply slightly modified controls of the form
\begin{equation}\label{cont_uni+}
\begin{aligned}
&u_1= u^\varepsilon_1(a(x,t),t)=a_{1}(x,t) +2\sqrt{\frac{\pi|a_{12}(x,t)|}{\varepsilon}}\ {\rm sign}(a_{12}(x,t))\cos{\frac{2\pi t}{\varepsilon}}+\tilde u_1(t),\\
&u_2= u^\varepsilon_2(a(x,t),t)=a_{2}(x,t) +2\sqrt{\frac{\pi|a_{12}(x,t)|}{\varepsilon}}\sin{\frac{2\pi t}{\varepsilon}}+\tilde u_2(t).
\end{aligned}
\end{equation}
Fig.~\ref{fig_uni_cat_feas} shows the behavior of the closed-loop system~\eqref{ex_uni},~\eqref{cont_uni+} with the same initial value and control parameters as in~\eqref{ini_params}.

{\em Unbounded and non-Lipschitz curves.}
Note that the approach of Subsection~\ref{sec_tracking} is also applicable for unbounded
curves which are not continuously differentiable, e.g. $x^*(t)=(t,0.5|t-10|,0)^\top$.
The results of numerical simulations are in Fig.~\ref{fig_uni_abs} with the control parameters~\eqref{ini_params} and $x(0)=(0,0,0)^\top$.
However, the Lipschitz property required in Corollary~\ref{cor_tracking} is important, see Fig.~\ref{fig_uni_quad} with $x^*(t)=(t,0.1t^2,0)^\top$.
As in Fig.~\ref{fig_uni_cat}, some zig-zags are present in Fig.~\ref{fig_uni_abs} due to non-feasible character of the reference curve.

Although our theoretical estimates allow to track even non-feasible curves with any prescribed accuracy, possible practical implementations of this approach should take into account the trade-off between the tracking accuracy and the frequency of switching allowed by the actuators.

\subsection{Obstacle avoidance}\label{sec_exmpls1_obst}
We consider the obstacle avoidance problem (Problem~3) for system~\eqref{ex_uni} in the domain $D\subset \mathbb R^3$ represented as
$$
D=\mathcal W\setminus\bigcup_{j=1}^7\mathcal O_j,\quad
\mathcal W=\{x\in\mathbb R^n:\varphi_0(x)\ge0\},\,\mathcal O_i=\{x\in\mathbb R^n:\varphi_i(x)<0\},
$$
where the cylindric workspace $\mathcal W$ and obstacles $\mathcal O_i$ are defined by the functions  $\varphi_i(x)=(x_1-x_{oi})^2+(x_2-y_{oi})^2-r_i^2$ , $i=\overline{0,7}$,
whose parameters are
$$
\begin{aligned}
&x_{o0}=0,\,y_{o0}=0,\,r_{0}=3.5,\\
&x_{o1}=2,\,y_{o1}=1,\,r_{1}=1,\\
&x_{o2}=0,\,y_{o2}=-0.25,\,r_{2}=0.5,\\
&x_{o3}=-1.5,\,y_{o3}=2,\,r_{3}=0.75,\\
&x_{o4}=-2,\,y_{o4}=0,\,r_{4}=0.75,\\
&x_{o5}=1.5,\,y_{o5}=-2,\,r_{5}=0.75,\\
&x_{o6}=0.5,\,y_{o6}=2.5,\,r_{6}=0.5,\\
&x_{o7}=-1,\,y_{o7}=-2,\,r_{7}=1.
\end{aligned}
$$
The potential function $P(x)$ is constructed in the form~\eqref{P_nav} with the target point $x^*=(-2,1,0)^\top$. In Fig.~\ref{fig_uni_obst} we present the classical and $\pi_\varepsilon$-solutions of the corresponding closed-loop system~\eqref{ex_uni} with $x^0=(1,-1,0)^\top$ and the control~\eqref{cont_uni} with $\varepsilon=0.25$. Fig.~\ref{fig_uni_obst2} shows the closed-loop response with the same initial point and $\varepsilon=0.1$. These figures illustrate that the proposed controllers solve the obstacle avoidance problem with acceptable accuracy.

\newpage
\begin{figure}[h!]
\begin{minipage}{1\linewidth}
\begin{center}
  \includegraphics[width=1\linewidth]{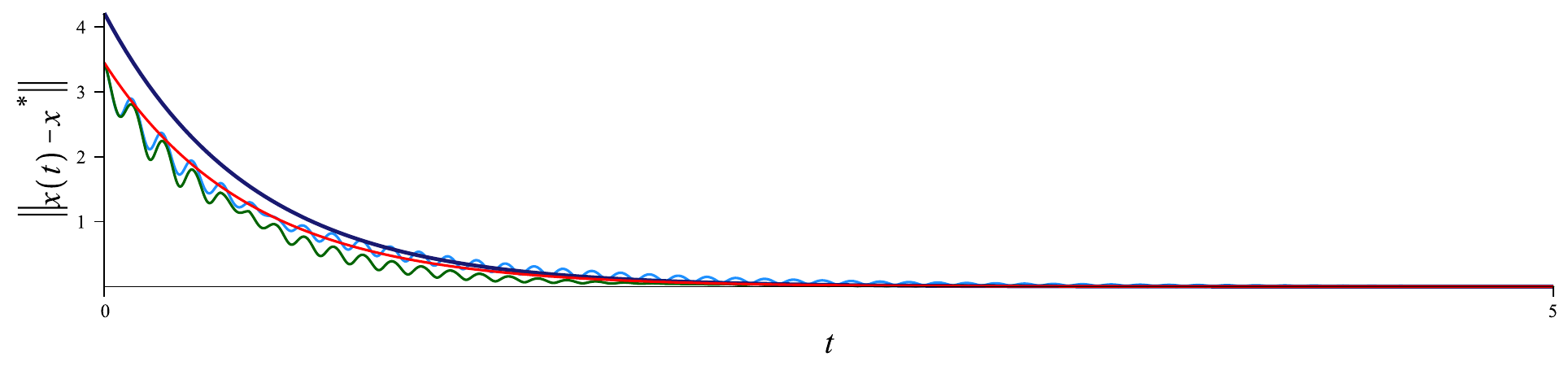}\\
\end{center}
\end{minipage}
\caption{Exponential stabilization: blue -- classical solution, green -- $\pi_\varepsilon$-solution, red -- gradient system $\dot{\bar x}=-\gamma\nabla P(\bar x)$, dark blue -- decay rate $\|x^0-x^*\|e^{-2\gamma(t-\varepsilon)}$}\label{fig_uni_stab}
\begin{minipage}{1\linewidth}
\begin{center}
  \includegraphics[width=1\linewidth]{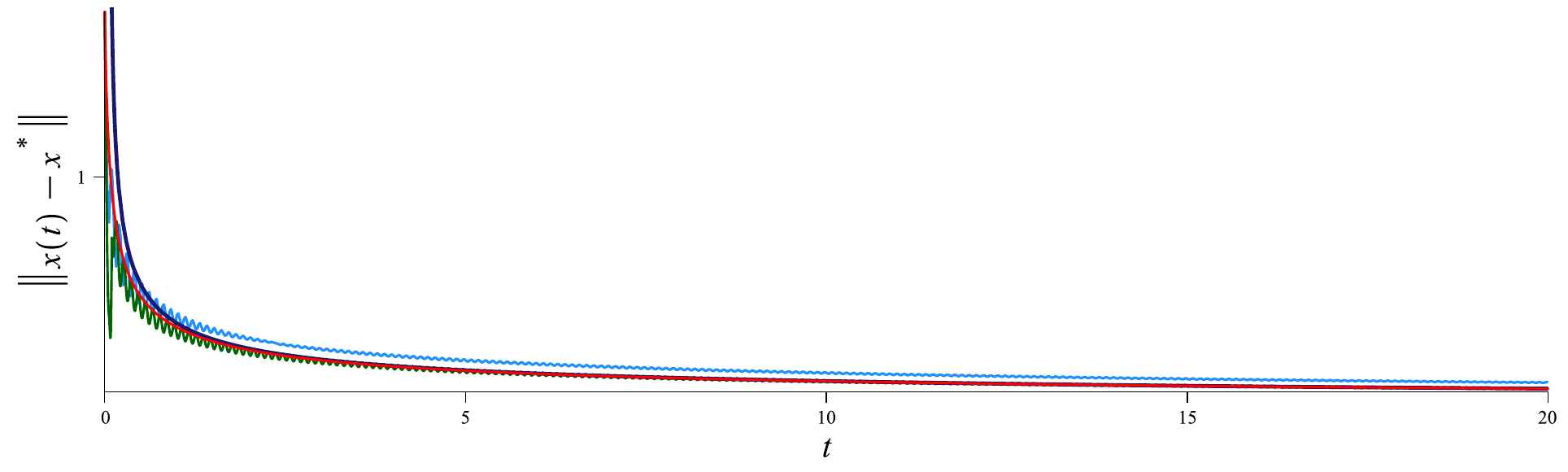}\\
\end{center}
\end{minipage}
\caption{Polynomial stabilization: blue -- classical solution, green -- $\pi_\varepsilon$-solution, red -- gradient system $\dot{\bar x}=-\gamma\nabla P(\bar x)$, dark blue -- decay rate $\left(\|x^0-x^*\|^{-2}+8\gamma(t-\varepsilon)\right)^{-1/2}$}\label{fig_uni_poly}
\begin{minipage}{1\linewidth}
\begin{center}
  \includegraphics[width=1\linewidth]{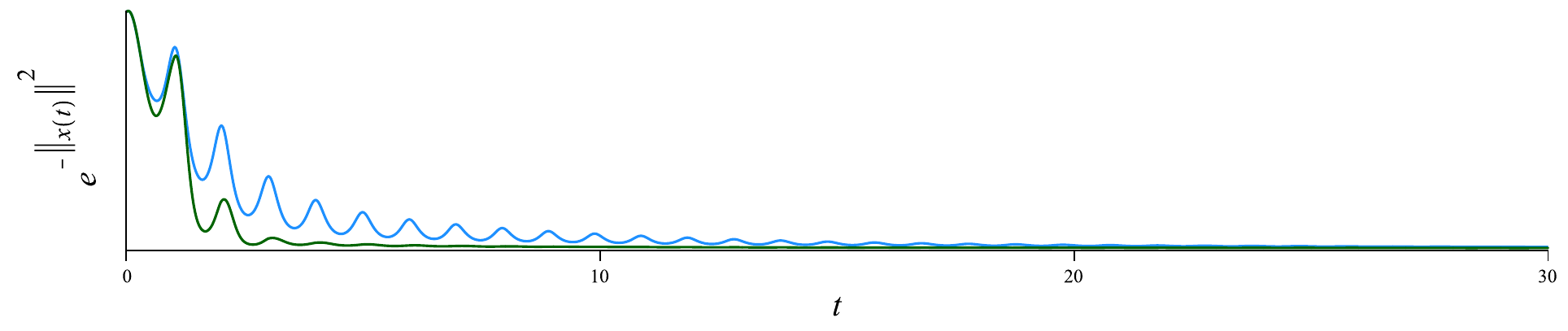}
 \includegraphics[width=1\linewidth]{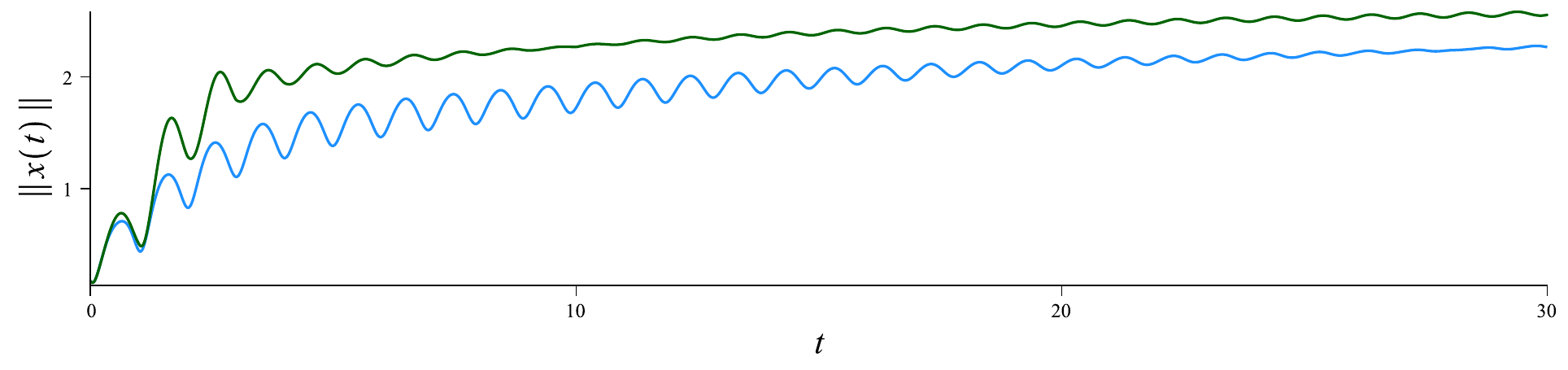}
\end{center}
\end{minipage}
\caption{``'Vanishing velocity property'': blue -- classical solution, green -- $\pi_\varepsilon$-solution}\label{fig_uni_unbound}
\end{figure}

\newpage

\begin{figure}[h!]
\begin{minipage}{0.5\linewidth}
\begin{center}
  \includegraphics[width=1\linewidth]{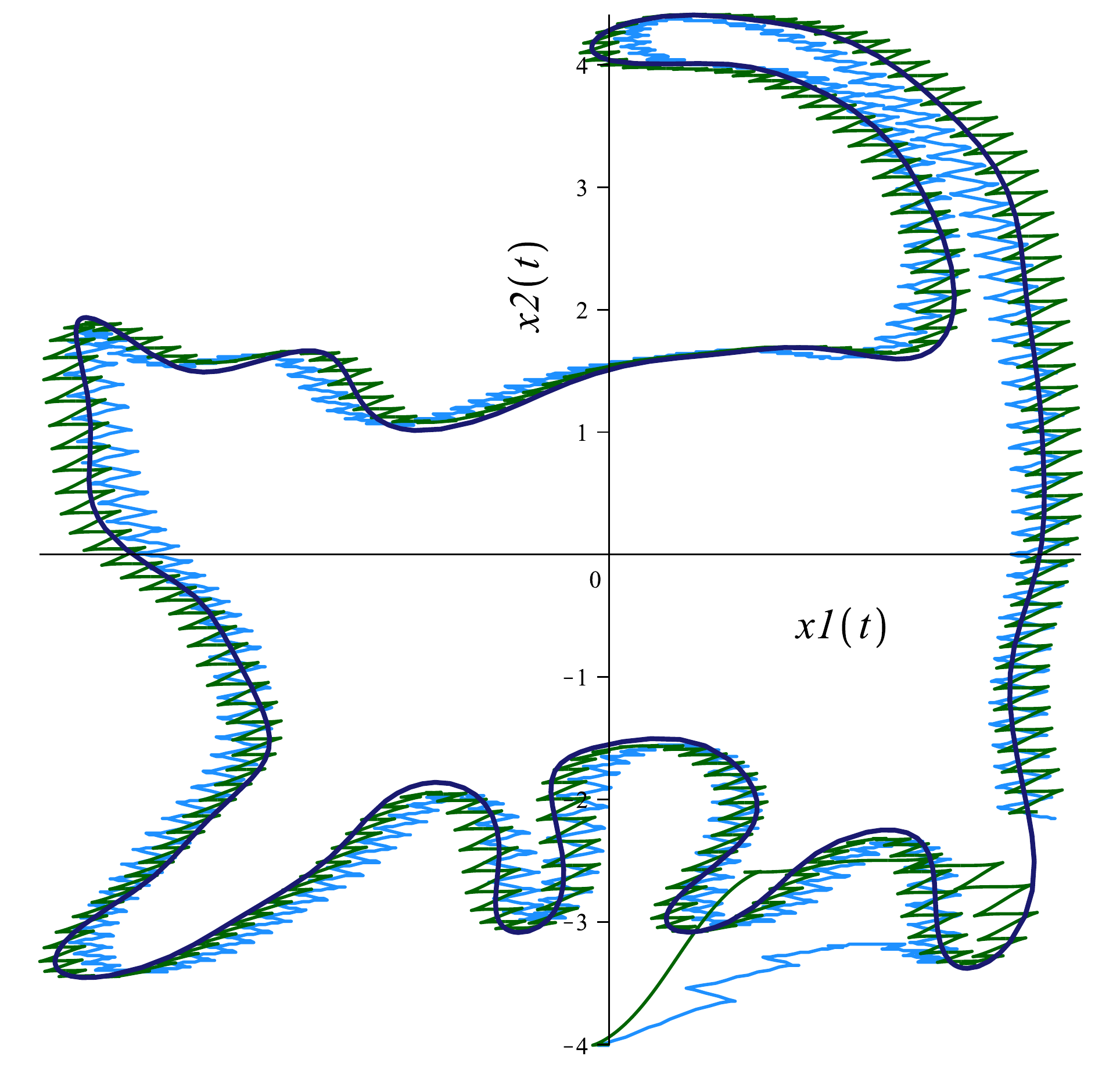}
 \includegraphics[width=1\linewidth]{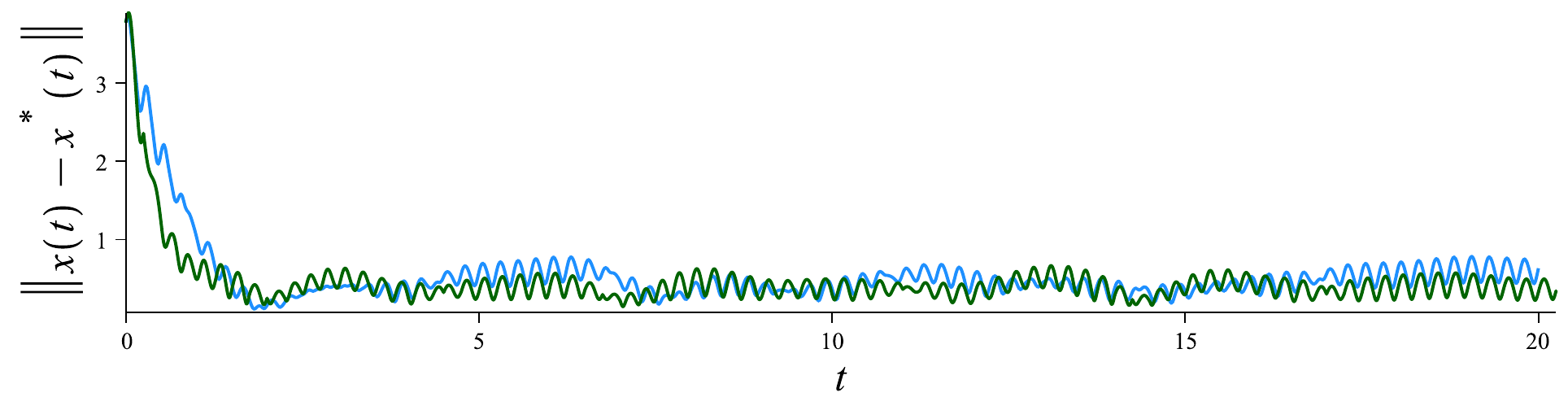}
\end{center}
\caption{blue -- classical solution, green -- $\pi_\varepsilon$-solution, dark blue -- $x^*(t)$, $t=0..20\pi$}\label{fig_uni_cat}
\end{minipage}
\begin{minipage}{0.5\linewidth}
\begin{center}
  \includegraphics[width=1\linewidth]{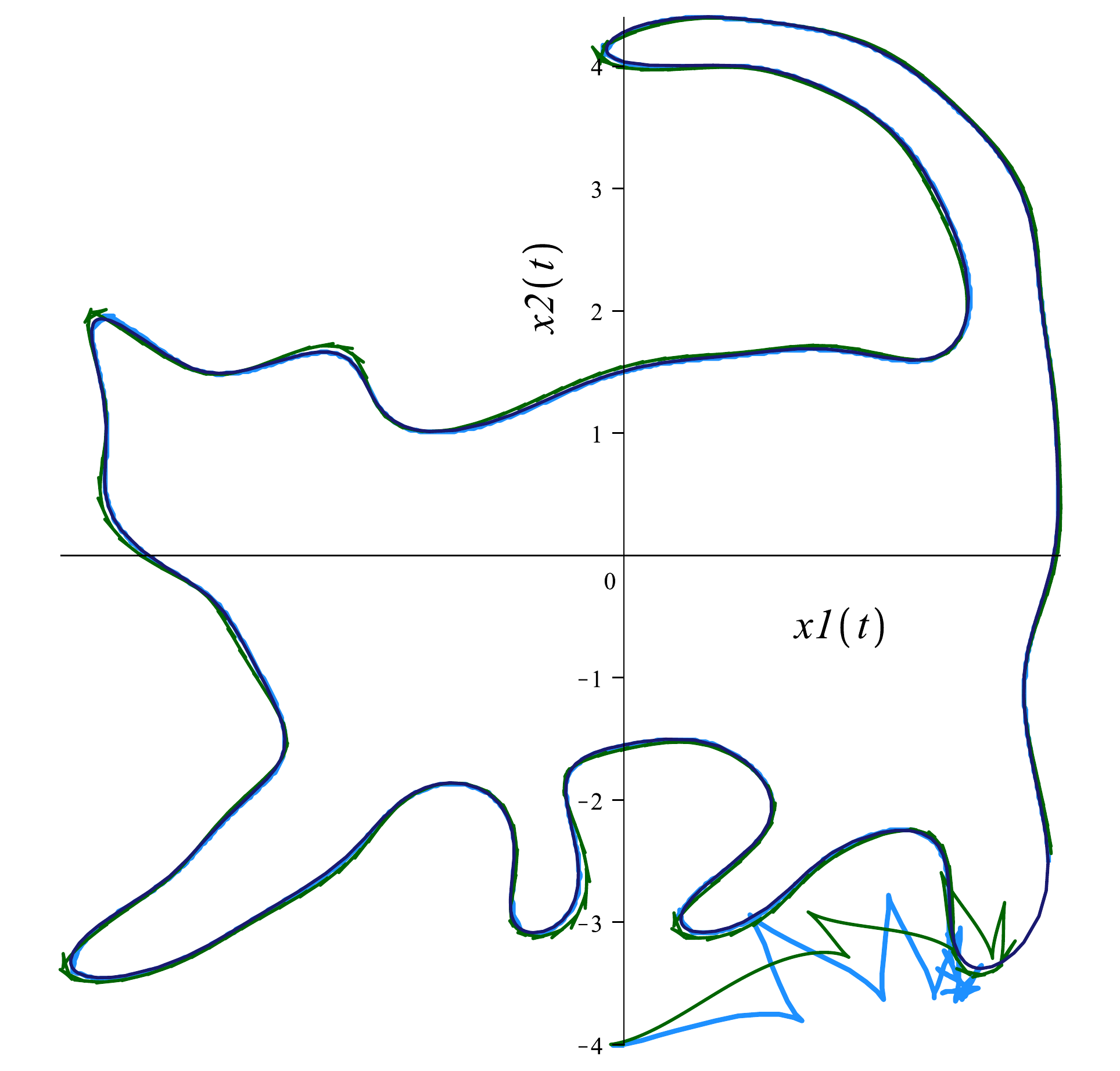}
 \includegraphics[width=1\linewidth]{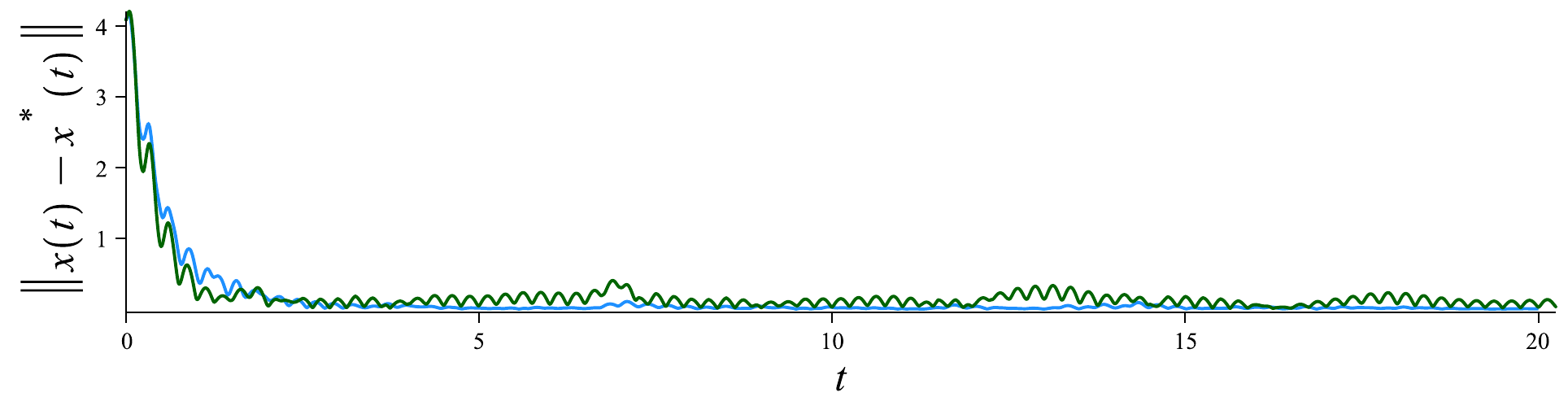}
\end{center}
\caption{blue -- classical solution, green -- $\pi_\varepsilon$-solution, dark blue -- $x^*(t)$, $t=0..20\pi$}\label{fig_uni_cat_feas}
\end{minipage}
\begin{minipage}{0.5\linewidth}
\begin{center}
  \includegraphics[width=1\linewidth]{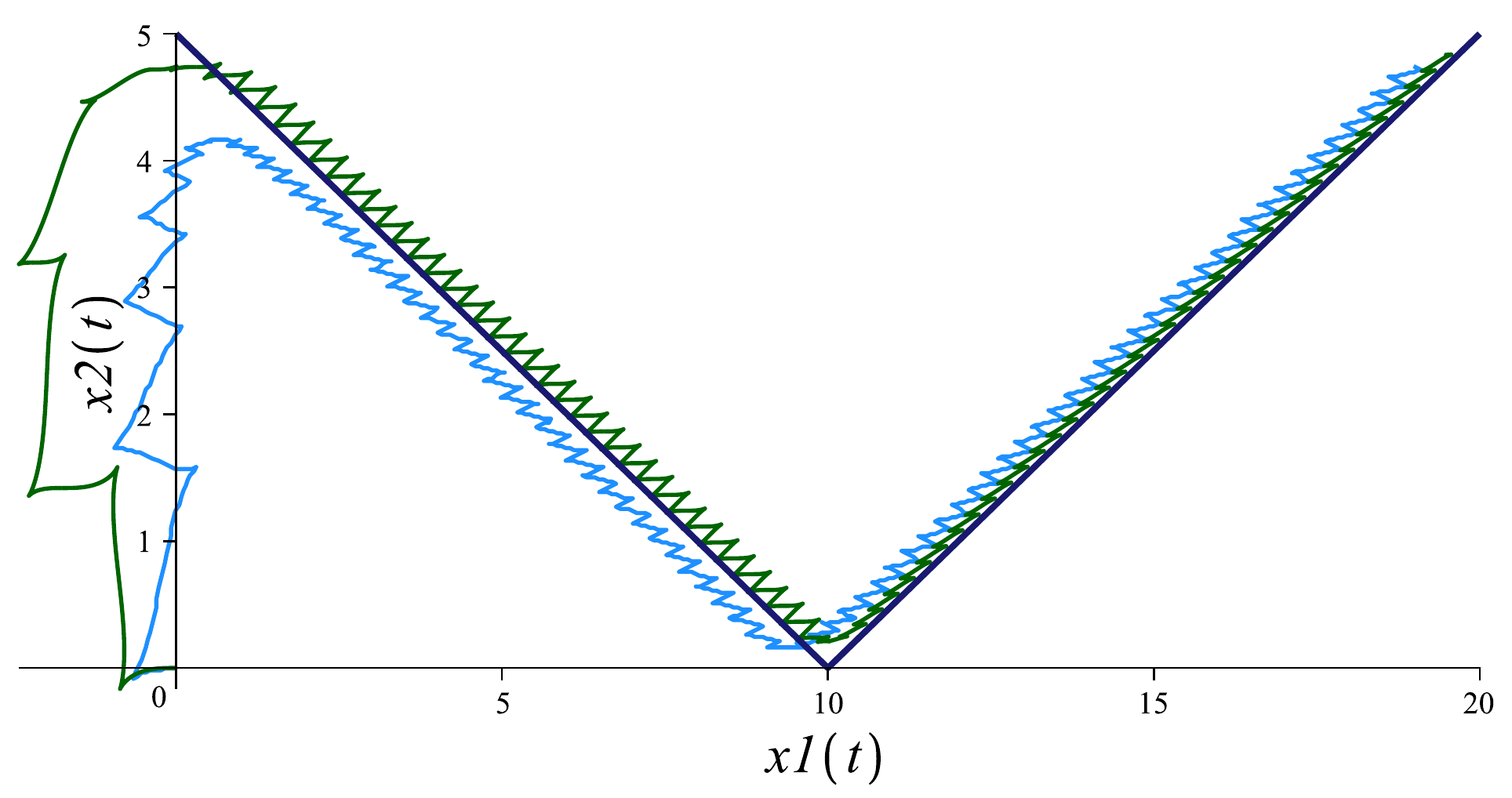}
 \includegraphics[width=1\linewidth]{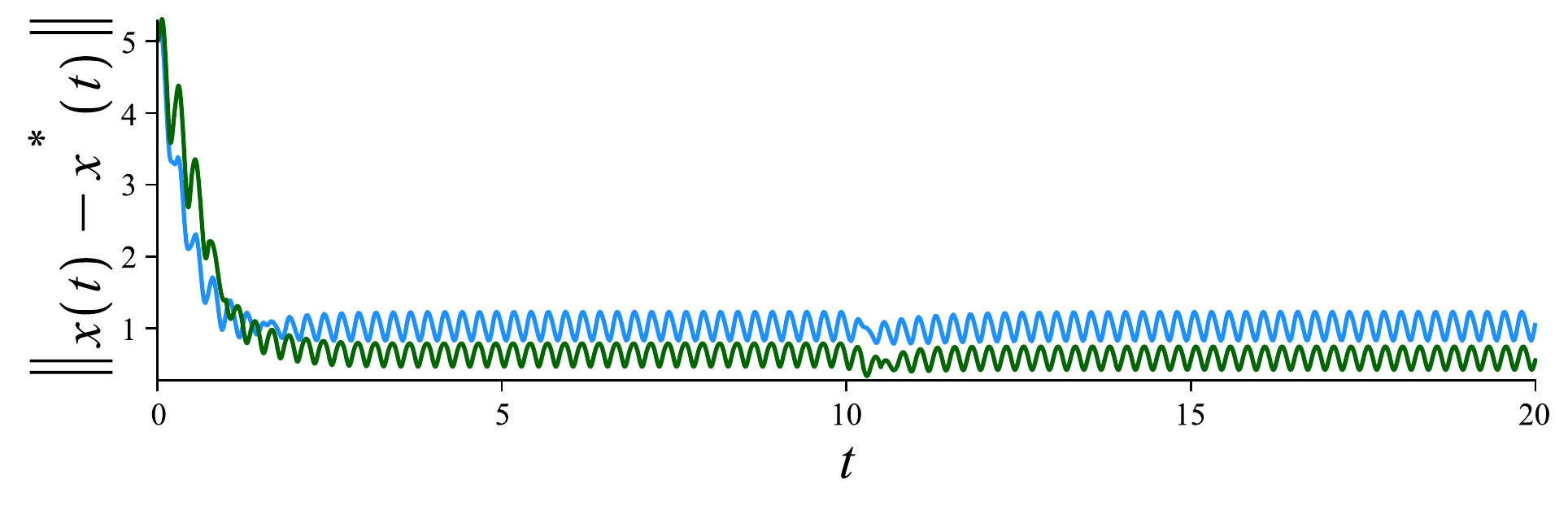}
\end{center}
\caption{blue -- classical solution, green -- $\pi_\varepsilon$-solution, dark blue -- $x^*(t)$}\label{fig_uni_abs}
\end{minipage}
\begin{minipage}{0.5\linewidth}
\begin{center}
  \includegraphics[width=1\linewidth]{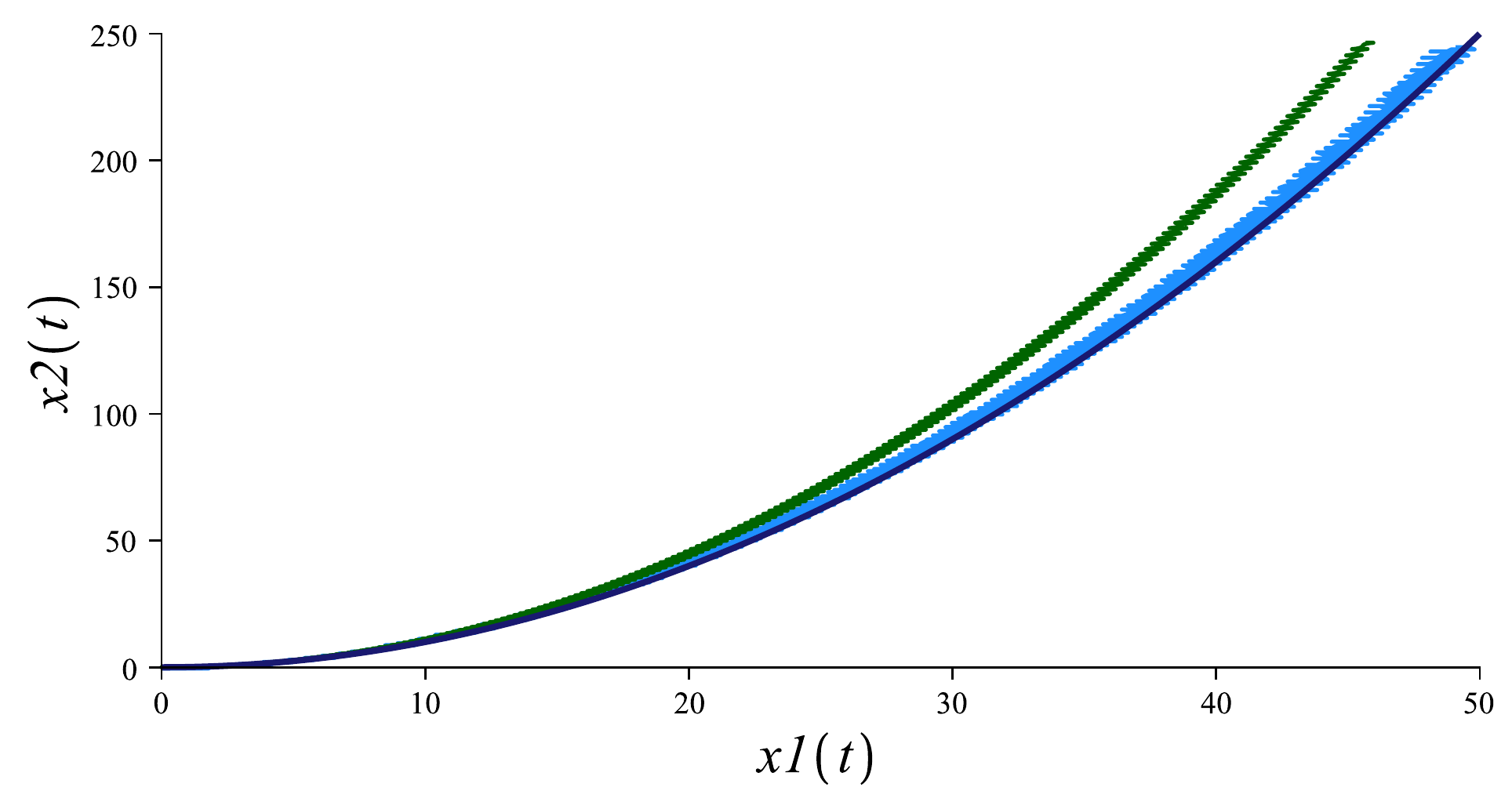}
 \includegraphics[width=1\linewidth]{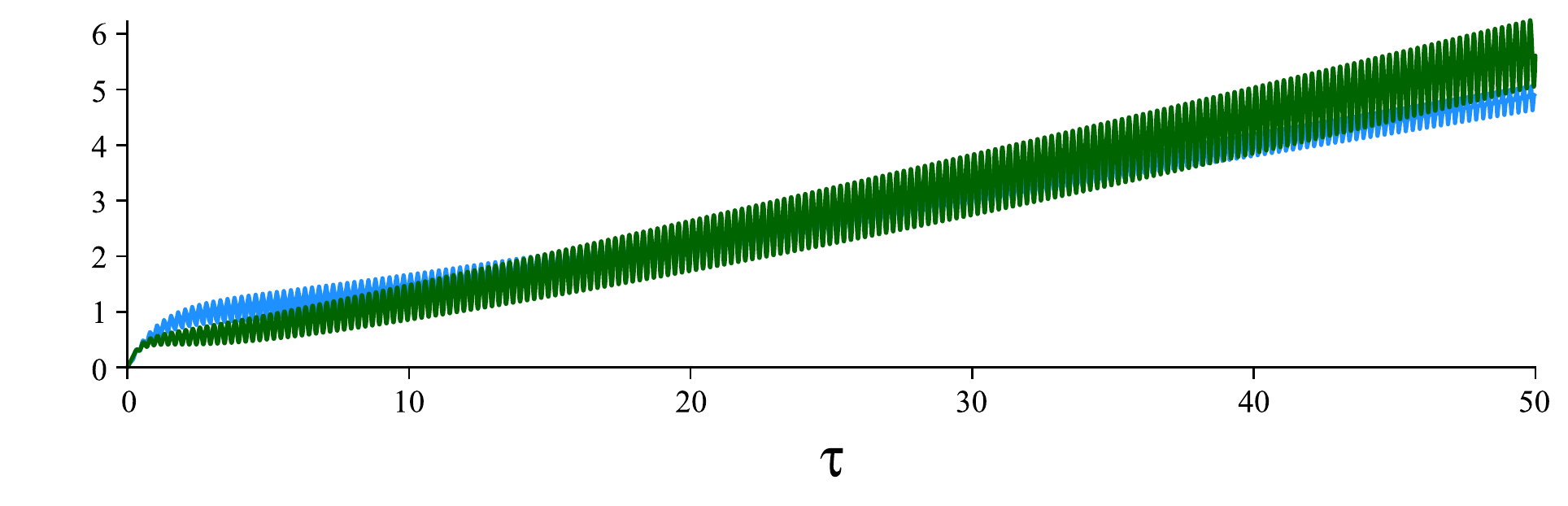}
\end{center}
\caption{blue -- classical solution, green -- $\pi_\varepsilon$-solution, dark blue -- $x^*(t)$}\label{fig_uni_quad}
\end{minipage}
\end{figure}

\newpage
\begin{figure}[ht]
\begin{minipage}{0.5\linewidth}
\begin{center}
  \includegraphics[width=1\linewidth]{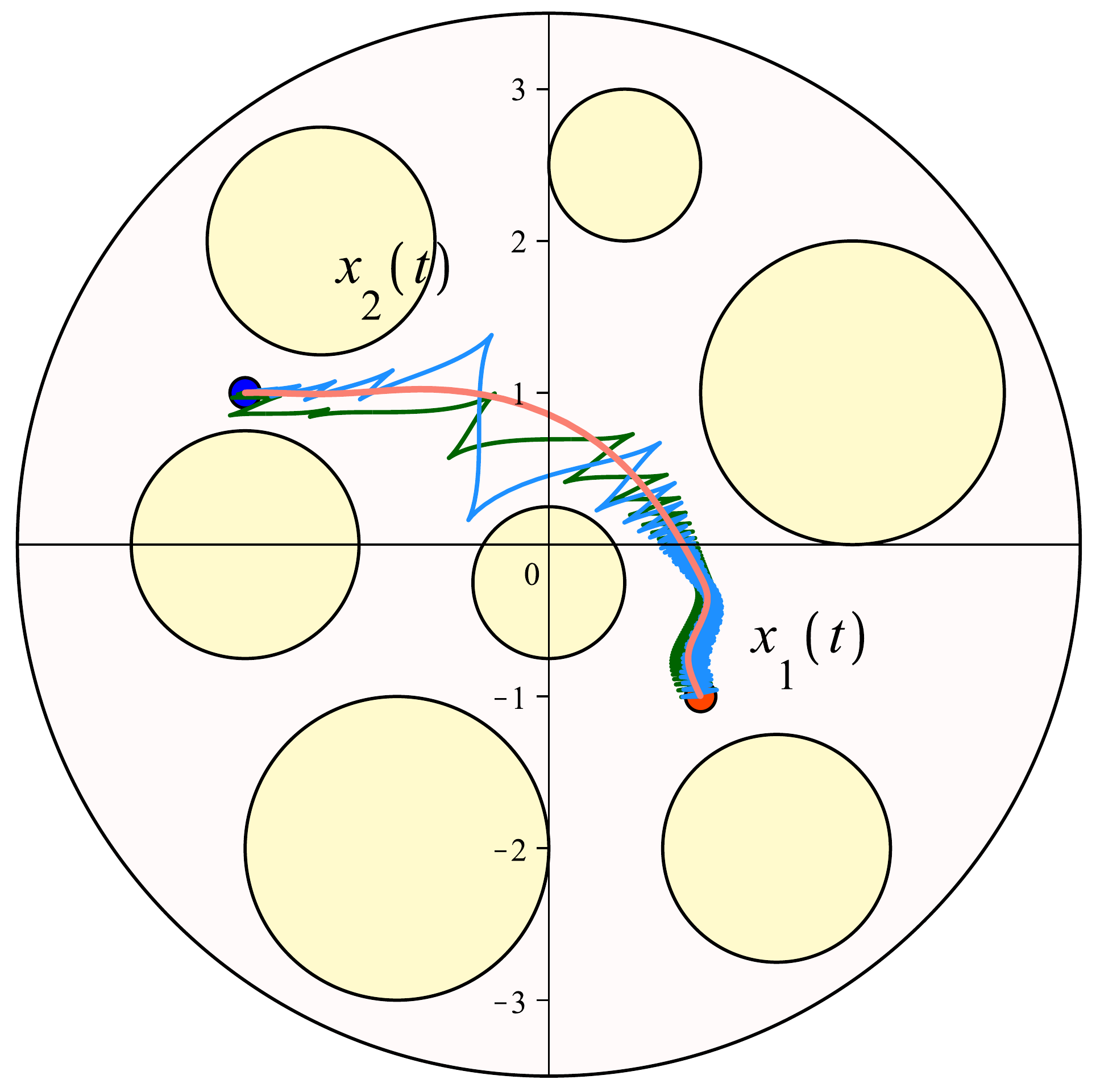}
 \includegraphics[width=1\linewidth]{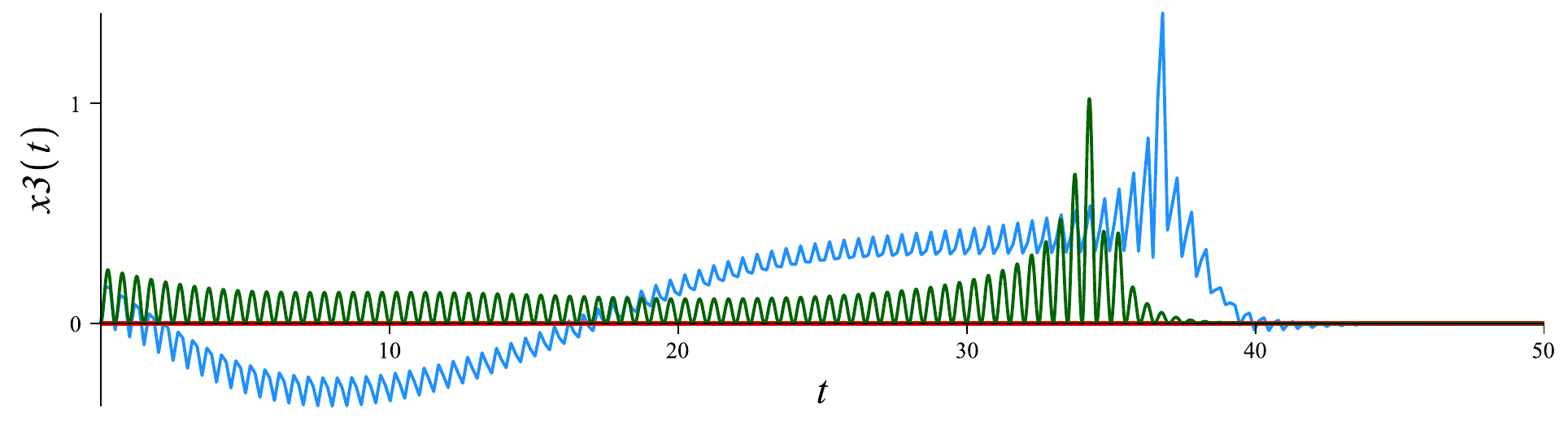}\\
\end{center}
\caption{blue -- classical solution, green -- $\pi_\varepsilon$-solution, red -- gradient system}\label{fig_uni_obst}
\end{minipage}
\begin{minipage}{0.5\linewidth}
\begin{center}
  \includegraphics[width=1\linewidth]{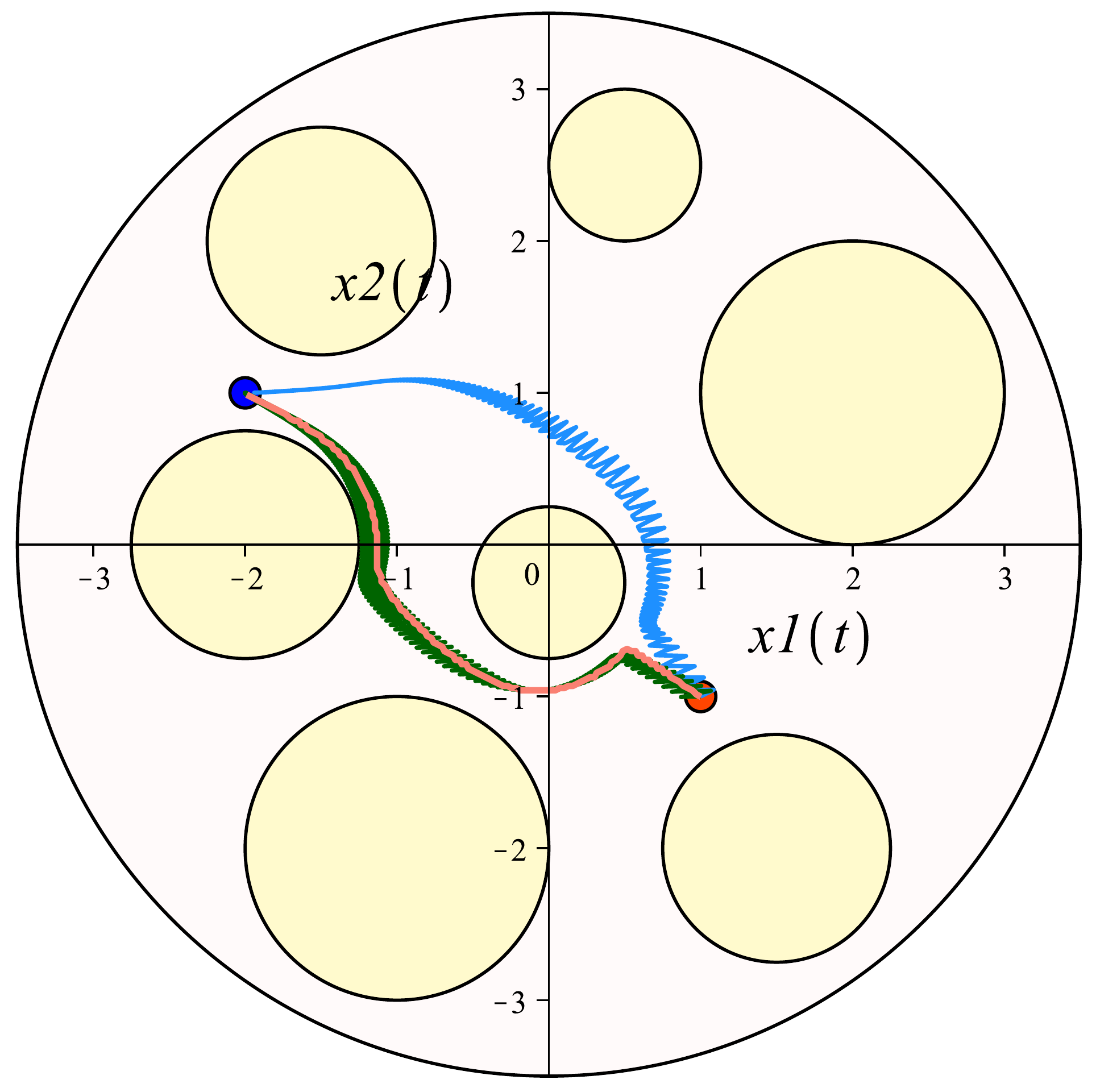}
 \includegraphics[width=1\linewidth]{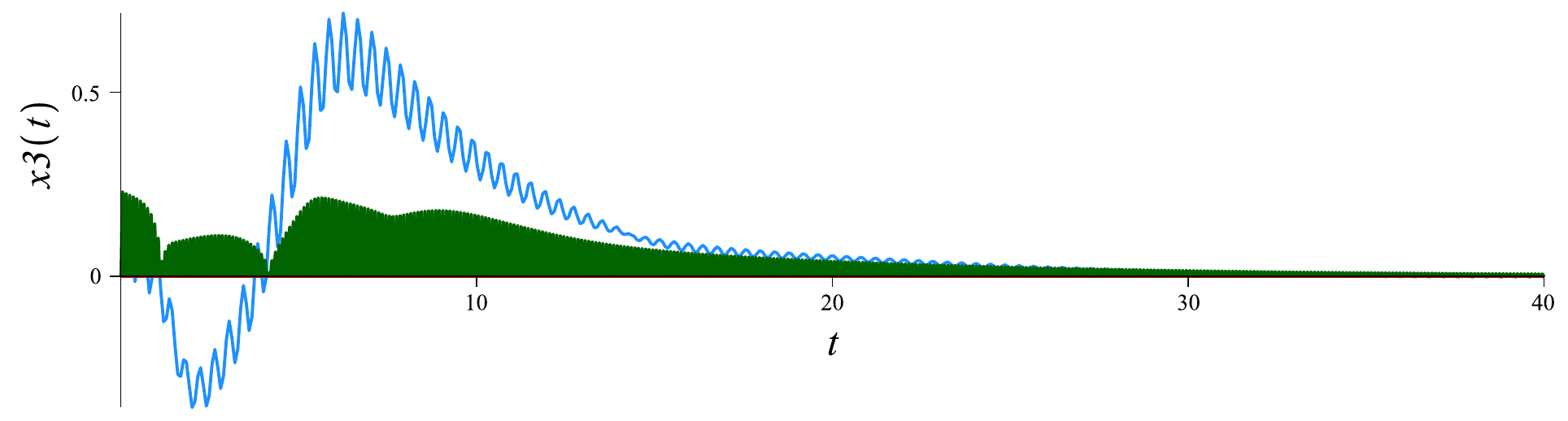}\\
\end{center}
\caption{blue -- classical solution, green -- $\pi_\varepsilon$-solution, red -- gradient system}\label{fig_uni_obst2}
\end{minipage}
\end{figure}

\section{Conclusion}\label{sec_concl}
The proposed design methodology can be considered as a multi-layered hierarchical scheme, where the reference dynamics (upper level) is governed by the gradient flow system~\eqref{sys_grad} with some potential function $P(x,t)$, and the physical level is ruled by nonholonomic control system~\eqref{Sigma} with oscillating inputs~\eqref{cont}.
In this framework, the coordination between the physical and reference dynamics is performed via discrete-time sampling at time instants $t_j = t_0+\varepsilon j$, $j=1,2, ...$~.
The proposed scheme generalizes and significantly extends the approaches previously developed for {\em particular} control problems with {\em time-invariant} vector fields such as stabilization~\cite{ZuSIAM}, motion planning on a finite time horizon~\cite{ZG17}, and obstacle avoidance~\cite{ZG17a}.
It should be emphasized that the contribution of this paper allows the treatment of nonlinear control systems with {\em time-varying} vector fields and {\em relatively simple structure of the control functions}~\eqref{cont}, whose amplitude factors $a(x,t)$ are effectively defined by the matrix inversion in~\eqref{a1}.
The latter feature is considered as an important advantage with respect to the method of~\cite{ZuSIAM,ZG17}, where solutions to a system of nonlinear algebraic equations are required for the design procedure.
Although the formal proof of our results for small $\varepsilon$ is established for $\pi_\varepsilon$-solutions only,
numerical simulations illustrate the similar behavior of classical solutions of the corresponding closed-loop system. Hence, the analysis of asymptotic behavior of classical solutions remains the subject of future study.

\newpage
\appendix
\section{Auxiliary results}\label{sec_aux}
In this appendix, we summarize some auxiliary lemmas which are needed for the proof of the main results.
\begin{lemma}~\label{lemma_x}
 Let $D\subseteq\mathbb R^n$, $t_0\ge 0$, and $x(t)\in D$, $t_0\le t\le\tau$, be a  solution of system~\eqref{Sigma}. Assume that there exist  $M ,L\ge0$ such that
\begin{align*}
\|f_k(x,t)\|\le   M,\,\|f_k(x,t)-f_k(y,t)\|\le  L\|x-y\|,
\end{align*}
for all $x,y\in D$, $t\ge0$, $k=\overline{1,m}$.
    Then
    \begin{equation}\label{est_Z}
      \|x(t)-x(t_0)\|\le (t-t_0) e^{L(t-t_0)}MU,\text{ for all }t\in[t_0,\tau],
    \end{equation}
    with $U=\max\limits_{t\in[t_0,\tau]}\sum\limits_{k=1}^{m}|u_{k}(t)|$.
  \end{lemma}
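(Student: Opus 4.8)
The plan is to rewrite the dynamics in integral form and then apply a Grönwall-type inequality. Set $\Delta(t):=\|x(t)-x(t_0)\|$. Since $x(\cdot)$ is an absolutely continuous solution of \eqref{Sigma} on $[t_0,\tau]$ taking values in $D$, the map $s\mapsto\sum_k f_k(x(s),s)u_k(s)$ is integrable and
\[
x(t)-x(t_0)=\int_{t_0}^{t}\sum_{k=1}^{m}f_k(x(s),s)\,u_k(s)\,ds,\qquad t\in[t_0,\tau].
\]

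First I would split each vector field along the reference point: $f_k(x(s),s)=f_k(x(t_0),s)+\bigl(f_k(x(s),s)-f_k(x(t_0),s)\bigr)$. Bounding the first summand by $\|f_k(x(t_0),s)\|\le M$ and the second by the Lipschitz estimate $\|f_k(x(s),s)-f_k(x(t_0),s)\|\le L\,\Delta(s)$, and using $\sum_{k}|u_k(s)|\le U$, the triangle inequality for integrals gives
\[
\Delta(t)\le (t-t_0)\,M\,U+L\int_{t_0}^{t}\Delta(s)\,\Bigl(\sum_{k=1}^m|u_k(s)|\Bigr)\,ds\le (t-t_0)\,M\,U+L\,U\int_{t_0}^{t}\Delta(s)\,ds.
\]

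The second step is the Grönwall lemma in the following form: for a nonnegative $\Delta$ satisfying $\Delta(t)\le a(t)+C\int_{t_0}^t\Delta(s)\,ds$ with $a$ nondecreasing, one has $\Delta(t)\le a(t)\,e^{C(t-t_0)}$; this follows in one line by differentiating $t\mapsto e^{-C(t-t_0)}\int_{t_0}^t\Delta(s)\,ds$. Taking $a(t)=(t-t_0)MU$, which is indeed nondecreasing, produces the exponential factor and hence the estimate \eqref{est_Z}, the constant in the exponent being controlled by $L$ together with the uniform bound $U$ on $\sum_k|u_k|$.

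The argument is essentially routine; the only point I would verify carefully is the passage to the integral inequality — i.e.\ that the assumed solution concept makes $s\mapsto f_k(x(s),s)u_k(s)$ integrable, legitimizes the fundamental theorem of calculus, and allows moving $\|\cdot\|$ inside the integral. Once that is granted, the splitting along $x(t_0)$ and the invocation of Grönwall are immediate, so I do not anticipate any genuine obstacle here; this lemma is a technical ingredient supplying the a priori displacement bounds used in the proofs of Lemma~\ref{lem_step1} and Theorem~\ref{thm_step1Pxt}.
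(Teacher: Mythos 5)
Your Gr\"onwall setup is the natural one and very likely what the paper's one-line ``follows from Gr\"onwall--Bellman'' has in mind, but carry the algebra through once more: the integral inequality you reach is
\[
\Delta(t)\le MU(t-t_0)+L\,U\int_{t_0}^{t}\Delta(s)\,ds,
\]
and the constant multiplying the integral is $LU$, not $L$. Gr\"onwall then gives $\Delta(t)\le MU(t-t_0)\,e^{LU(t-t_0)}$, whereas the lemma asserts the factor $e^{L(t-t_0)}$. For $U>1$ your bound is strictly weaker and does \emph{not} imply \eqref{est_Z}, so as written there is a gap; your closing sentence even acknowledges that the exponent is ``controlled by $L$ together with $U$'' but then asserts that \eqref{est_Z} is ``produced,'' which does not follow.

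The statement is nonetheless true, and for a simpler reason that makes both the Lipschitz hypothesis and Gr\"onwall superfluous here. Because $x(s)\in D$ for all $s\in[t_0,\tau]$, the uniform bound $\|f_k(x(s),s)\|\le M$ applies directly under the integral:
\[
\|x(t)-x(t_0)\|\le\int_{t_0}^{t}\sum_{k=1}^{m}\|f_k(x(s),s)\|\,|u_k(s)|\,ds\le M\int_{t_0}^{t}\sum_{k=1}^{m}|u_k(s)|\,ds\le MU\,(t-t_0),
\]
and since $L\ge0$ the factor $e^{L(t-t_0)}\ge1$ can be appended for free, giving \eqref{est_Z}. In other words, the exponential in \eqref{est_Z} is vacuous under the stated hypotheses (it would be essential in a variant with an uncontrolled drift term, where the pointwise $M$-bound along the trajectory is not available). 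So either replace your Gr\"onwall argument by this direct estimate, or, if you insist on the decomposition $f_k(x(s),s)=f_k(x(t_0),s)+\bigl(f_k(x(s),s)-f_k(x(t_0),s)\bigr)$, be explicit that the exponent you obtain is $LU(t-t_0)$ and that this establishes a \emph{different} bound which coincides with \eqref{est_Z} only when $U\le1$.
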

  \begin{proof}
    Follows from the Gr\"onwall--Bellman inequality.
  \end{proof}
 \begin{lemma}~\label{lemma_volt}
Let $D\subseteq\mathbb R^n$, $t_0\ge 0$, and $x(t)\in D$, $t_0\le t\le\tau$, be a  solution of system~\eqref{Sigma} with $u\in C[t_0,\tau]$ and $x(t_0)=x^0\in D$. Assume that the vector fields $f_k\in C^2(D\times\mathbb R^+;\mathbb R^n)$ are such that
$f_k(\cdot,t)\in C^3(D;\mathbb R^n)$ for each fixed $t\ge 0$,  $k=\overline{1,m}$.
Then $x_\pi(t)$ can be represented in the following way:
$$
\begin{aligned}
  x_\pi(t)=x^0&+\sum_{k=1}^mf_k(x^0,t_0)\int\limits_{t_0}^tu_k(s_1)ds_1 +\sum_{k_1,k_2=1}^mL_{f_{k_2}}f_{k_1}(x^0,t_0)\int\limits_{t_0}^t\int\limits_{t_0}^{s_1}u_{k_1}(s_1)u_{k_2}(s_2)ds_2ds_1\\
  &+r_1(t)+r_2(t),
\end{aligned}
$$
where
\begin{equation}\label{r1}
\begin{aligned}
  r_1(t)=&\sum\limits_{k_1,k_2,k_3=1}^m\int\limits_{t_0}^t\int\limits_{t_0}^{s_1}\int\limits_{t_0}^{s_2} L_{f_{k_3}}L_{f_{k_2}}f_{k_1}(x(s_3),s_3)u_{k_1}(s_1)u_{k_2}(s_2)u_{k_3}(s_3)ds_3ds_2ds_1,\\
  r_2(t)=&\sum_{k=1}^m\int\limits_{t_0}^t\int\limits_{t_0}^{s_1}\frac{\partial}{\partial s_2}f_k(x(s_2),s_2)u_k(s_1)ds_2ds_1\\
  &+\sum_{k_1,k_2=1}^m\int\limits_{t_0}^t\int\limits_{t_0}^{s_1}\int\limits_{t_0}^{s_2}\frac{\partial}{\partial s_3}\big(L_{f_{k_2}}f_{k_1}(x(s_3),s_3)\big)u_{k_1}(s_1)u_{k_2}(s_2)ds_3ds_2ds_1
\end{aligned}
\end{equation}
 \end{lemma}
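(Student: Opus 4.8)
The plan is to obtain the stated representation by iterating the integral form of the Cauchy problem and performing a Taylor-type expansion of the vector fields along the trajectory. First I would write the solution in Volterra (integral) form,
$$
x_\pi(t)=x^0+\sum_{k=1}^m\int_{t_0}^t f_k(x_\pi(s_1),s_1)\,u_k(s_1)\,ds_1,
$$
which is legitimate since $u\in C[t_0,\tau]$ and the $f_k$ are $C^2$ in $(x,t)$, so the right-hand side of~\eqref{Sigma} is continuous and the solution is $C^1$. The idea is then to expand $f_k(x_\pi(s_1),s_1)$ about the initial point $(x^0,t_0)$: write $f_k(x_\pi(s_1),s_1)=f_k(x^0,t_0)+\big(f_k(x_\pi(s_1),s_1)-f_k(x^0,s_1)\big)+\big(f_k(x^0,s_1)-f_k(x^0,t_0)\big)$. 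The last difference in parentheses contributes, after integration, the $\partial_{s_2}f_k$ term in $r_2(t)$ via the fundamental theorem of calculus applied in the time variable. For the spatial difference I would use the (directional) Taylor expansion $f_k(x_\pi(s_1),s_1)-f_k(x^0,s_1)=\int_0^1 \frac{\partial f_k}{\partial x}\big(x^0+\theta(x_\pi(s_1)-x^0),s_1\big)\big(x_\pi(s_1)-x^0\big)\,d\theta$, and then substitute the increment $x_\pi(s_1)-x^0=\sum_{k_2}\int_{t_0}^{s_1}f_{k_2}(x_\pi(s_2),s_2)u_{k_2}(s_2)\,ds_2$ obtained from the first iteration.

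Next I would iterate once more on the inner factor. Substituting the increment turns the spatial difference into $\sum_{k_2}\int_{t_0}^{s_1} L_{f_{k_2}}f_{k_1}(x_\pi(s_2),s_2)u_{k_1}(s_1)u_{k_2}(s_2)\,ds_2$ up to a higher-order correction, and expanding $L_{f_{k_2}}f_{k_1}$ about $(x^0,t_0)$ in exactly the same two-step fashion (spatial part plus time part) produces: the leading constant term $L_{f_{k_2}}f_{k_1}(x^0,t_0)$ appearing in the double integral of the statement; the time-derivative correction $\partial_{s_3}\big(L_{f_{k_2}}f_{k_1}\big)$ that enters $r_2(t)$; and the spatial correction, which after one further substitution of the increment $x_\pi(s_3)-x^0$ yields the triple integral $r_1(t)$ with integrand $L_{f_{k_3}}L_{f_{k_2}}f_{k_1}(x_\pi(s_3),s_3)$. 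Here the $C^3$-in-$x$ hypothesis is exactly what is needed so that $L_{f_{k_3}}L_{f_{k_2}}f_{k_1}$ is well-defined and continuous along the trajectory, making the triple integral $r_1(t)$ meaningful; the $C^2$-in-$(x,t)$ hypothesis guarantees $\partial_t(L_{f_{k_2}}f_{k_1})$ exists and is continuous for $r_2(t)$.

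The main bookkeeping obstacle is keeping the orders of iteration consistent: each substitution of the increment $x_\pi(s_i)-x^0$ must be done on precisely the right factor and nested under the correct integral, and one must verify that the ``higher-order corrections'' discarded at each stage are reabsorbed into $r_1(t)$ and not lost. Concretely, one should carry a single exact Taylor remainder (integral form with the parameter $\theta$) rather than several, and only at the very last step identify the remainder with the trajectory-dependent triple integral; this avoids an infinite regress. A secondary technical point is justifying the interchange of the $\theta$-integral with the $s$-integrals and the collapse of the $\theta$-integral once the integrand has been pushed to the trajectory point $x_\pi(s_3)$ — this is routine by continuity and boundedness on the compact set traced by the solution, using Lemma~\ref{lemma_x} to control $\|x_\pi(s)-x^0\|$. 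Once the terms are matched, the identity follows, and no inequality or smallness of parameters is required for the representation itself.
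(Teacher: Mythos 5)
There is a genuine mismatch between your decomposition and the target formula. You split $f_k(x(s_1),s_1)$ as $f_k(x^0,t_0)+\big(f_k(x(s_1),s_1)-f_k(x^0,s_1)\big)+\big(f_k(x^0,s_1)-f_k(x^0,t_0)\big)$ and apply FTC in time to the third bracket, which produces $\int_{t_0}^{s_1}\partial_{s_2}f_k(x^0,s_2)\,ds_2$. But the term in $r_2(t)$ is $\int_{t_0}^{s_1}\partial_{s_2}f_k(x(s_2),s_2)\,ds_2$, with the \emph{trajectory} point $x(s_2)$ in the argument, not the fixed base point $x^0$. Similarly, your spatial Taylor step produces a factor $\int_0^1 D_xf_{k_1}\big(x^0+\theta(x(s_1)-x^0),s_1\big)\,d\theta$ multiplied against $f_{k_2}(x(s_2),s_2)$; the two arguments never coincide, so this cannot be rewritten as $L_{f_{k_2}}f_{k_1}$ at a single point without introducing an extra correction. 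You acknowledge a ``higher-order correction'' at this stage, but the lemma is an exact identity, not an expansion up to order, and there is no slot in $r_1,r_2$ into which these $\theta$-dependent discrepancies reabsorb.

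The fix is to dispense with the base-point Taylor split entirely and apply the fundamental theorem of calculus \emph{along the trajectory}. For any $g\in C^1$ one has
\begin{equation*}
g(x(t),t)-g(x^0,t_0)=\int_{t_0}^{t}\Big(\sum_{k}L_{f_k}g(x(s),s)\,u_k(s)+\tfrac{\partial}{\partial s}g(x(s),s)\Big)\,ds.
\end{equation*}
Apply this to $g=f_{k_1}$ inside the Volterra form $x(t)=x^0+\sum_{k_1}\int_{t_0}^t f_{k_1}(x(s_1),s_1)u_{k_1}(s_1)\,ds_1$; this already yields the first-order term, the first line of $r_2$, and the double integral of $L_{f_{k_2}}f_{k_1}(x(s_2),s_2)$. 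Then apply the same identity once more to $g=L_{f_{k_2}}f_{k_1}$ to extract $L_{f_{k_2}}f_{k_1}(x^0,t_0)$ plus the triple integral $r_1$ and the second line of $r_2$, all with arguments on the trajectory as required. This is the iterated Chen--Fliess expansion, is exact term by term, and needs no Taylor remainders or $\theta$-integrals at all; the $C^3$-in-$x$ and $C^2$-in-$(x,t)$ hypotheses are used precisely so that $L_{f_{k_3}}L_{f_{k_2}}f_{k_1}$ and $\partial_t(L_{f_{k_2}}f_{k_1})$ are continuous along the trajectory.
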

  \begin{proof}
    This result provides a modification of the Chen--Fliess series expansion (see, e.g.,~\cite{ZG17}).
  \end{proof}

\section{Proof of Lemma~\ref{lem_step1}}\label{proof_step1}

The proof consists of several steps. Throughout the paper, we assume that
 $$
 0<\varepsilon\le \varepsilon_0(\gamma)=\frac{1}{\gamma},
 $$
and $\gamma>0$ will be chosen in Step~3.

\emph{Step~1. The goal of the first step is to find $\varepsilon_1(\gamma)>0$ such that, for all $\gamma>0$ and $\varepsilon\in(0,\min\{\varepsilon_0(\gamma),\varepsilon_1(\gamma)\}]$, the $\pi_\varepsilon$-solution $x_\pi(t)$ of system~\eqref{Sigma} with the initial data $x_\pi(t_0)=x^0$ and the controls $u_k=u_k^\varepsilon(a(x,t),t)$ is well-defined on $t\in[t_0,t_0+\varepsilon]$, i.e. $x_\pi(t)\in D$ for all $t\in[t_0,t_0+\varepsilon]$.}

Let $t_0\ge0$, $x^0\in D$, and let $P(x,t)$ satisfy Assumptions~\ref{as_Pbounds}--\ref{as_Psets}. Given   any positive numbers $\lambda>0$ and $\rho>0$ satisfying Assumption~\ref{as_Psets}, consider the level sets
\begin{equation}\label{tilD}
\widetilde {\mathcal D}_t=\mathcal L_t^{P,P(x^0,t_0)+\lambda}= \{x\in\mathbb R^n:P(x,t)\le P(x^0,t_0)+\lambda\}.
\end{equation}
and
$$
{\mathcal L}_t^{\nabla P,\rho} = \{x\in\mathbb R^n:\|\nabla_xP(x,t)\|\le \rho\}
$$
for $t\ge0$. By Assumption~\ref{as_Psets}, $\widetilde{\mathcal D}_t$ are compact subsets and
 \begin{equation}\label{emb}
     \mathcal L_t^{\nabla P,\rho}\subseteq  \mathcal L_t^{P,P(x^0,t_0)+\lambda}\subset  D\text{ for each }t\ge0.
\end{equation}

Note that according to Definition~\ref{def_pi}, $u_k=u_k^\varepsilon(a(x^0,t_0),t)$ for $t\in[t_0,t_0+\varepsilon]$. Using H\"older's inequality, we estimate the value of $U(x^0,t_0)=\max\limits_{t_0\le t\le t_0+\varepsilon}\sum\limits_{k=1}^m|u_k^\varepsilon(a(x^0,t_0),t|$:
$$
\begin{aligned}
U(x^0,t_0)&\le \max\limits_{t_0\le t\le t_0+\varepsilon}\sum_{i\in S_1}|a_i(x^0,t_0)| +4\sqrt{\frac{\pi}{\varepsilon}}\sum_{(j_1,j_2)\in S_2}\sqrt{|a_{j_1j_2}(x^0,t_0)|\kappa_{j_1j_2}}\\
&\le \|a(x^0,t_0)\||S_1|+\|a(x^0,t_0)\|^{1/2}4\sqrt{\frac{\pi}{\varepsilon}}\Big(\sum_{(j_1,j_2)\in S_2}|\kappa_{j_1j_2}|^{2/3}\Big)^{3/4}.
\end{aligned}
$$
From Assumption~\ref{as_rank}.2) and formula~\eqref{a1} we conclude that, for all $\gamma>0$ and $\varepsilon\in(0,\varepsilon_0(\gamma)]$,
\begin{equation}\label{U1}
 U(x^0,t_0)\le c_u\sqrt{\frac{\gamma{M_F}\|\nabla_xP(x^0,t_0)\|}{\varepsilon}}
\end{equation}
with $c_u=|S_1|\sqrt{{M_F}} L_{Px}+4\sqrt\pi\Big(\sum_{(j_1,j_2)\in S_2}|\kappa_{j_1j_2}|^{2/3}\Big)^{3/4}$, where the constant $L_{Px}$ is defined from Assumption~\ref{as_Pbounds}.2) with $\{\widetilde{\mathcal D}_t\}_{t\ge0}$ given by~\eqref{tilD}. Let us also take constants $M_f>0$ and $L_{fx}\ge0$ from Assumption~\ref{as_f}.1)--\ref{as_f}.2) with $\{\widetilde{\mathcal D}_t\}_{t\ge0}$ given by~\eqref{tilD}:
$$
  \begin{aligned}
    \|f_k(x,t)\|\le M_f,\,\|f_k(x,t)-f_k(y,t)\|\le L_{fx}\|x-y\|\text{ for all }t\ge0,\,x,y\in \widetilde{\mathcal D}_t.
  \end{aligned}
  $$
Then Lemma~\ref{lemma_x} together with~\eqref{U1} yields the following estimate:
\begin{equation}\label{xx0}
\begin{aligned}
  \|x_\pi(t)-x^0\|&\le (t-t_0) e^{L_{fx}(t-t_0)}U(x^0,t_0)M_f\\
  &\le \sqrt{\varepsilon\gamma{M_F}\|\nabla_xP(x^0,t_0)\|} e^{L_{fx}\varepsilon}c_uM_f\text{ for all }t\in[t_0,t_0+\varepsilon].
\end{aligned}
\end{equation}
Let us underline that the latter estimate holds not only for the chosen $x^0\in D$, but also for any $x^0\in \widetilde{\mathcal D}_t$, ${t\ge0}$.
Using the obtained inequality and Assumption~\ref{as_Pbounds}.2), we estimate $P(x_\pi(t),t)$ in the following way:
$$
\begin{aligned}
P(x_\pi(t),t)&\le P(x^0,t_0)+\|P(x_\pi(t),t)-P(x^0,t_0)\|\\
&\le P(x^0,t_0)+L_{Px}\|x_\pi(t)-x^0\|+L_{Pt}|t-t_0|\\
&\le P(x^0,t_0) +\sqrt{\varepsilon\gamma{M_F}} e^{L_{fx}\varepsilon}c_uM_f L_{Px}^{3/2}+\varepsilon L_{Pt},
\end{aligned}
$$
for all $t\in[t_0,t_0+\varepsilon]$.
Let us define $\varepsilon_1(\gamma)$ as the smallest positive root of the equation
\begin{equation}\label{eps1}
\sqrt{\varepsilon\gamma{M_F}} e^{L_{fx}\varepsilon}c_uM_f L_{Px}^{3/2}+\varepsilon L_{Pt}=\lambda.
\end{equation}
Then for any $\gamma>0$ and $\varepsilon\in\big(0,\min\{\varepsilon_0(\gamma),\varepsilon_1(\gamma)\}\big]$,
$$
P(x_\pi(t),t)\le P(x^0,t_0)+\lambda\text{ for all }t\in[t_0,t_0+\varepsilon],
$$
that is $x_\pi(t)\in \widetilde{\mathcal D}_t\subset D$ for all $t\in[t_0,t_0+\varepsilon]$.

\emph{Step 2. The goal of this step is to show that the $\pi_\varepsilon$-solution $x_\pi(t)$ of system~\eqref{Sigma} with the initial data $x_\pi(t_0)=x^0\in D$ and the controls $u_k=u_k^\varepsilon(a(x,t),t)$ can be represented in the form
$$
x_\pi(t_0+\varepsilon)=x^0-\varepsilon\gamma \nabla_x P(x^0,t_0)+R(t_0+\varepsilon),
$$
where $\|R(\varepsilon)\|=O\big((\varepsilon\|\nabla_xP(x^0,t_0)\|)^{1/2}\big)+O((\varepsilon\|\nabla_xP(x^0,t_0)\|)^{3/2})$ as $\varepsilon\to0$.}

Applying Lemma~\ref{lemma_volt} to the $\pi_\varepsilon$-solution $x_\pi(t)$ of system~\eqref{Sigma} with the initial data $x_\pi(t_0)=x^0\in D$ and the controls $u_k=u_k^\varepsilon(a(x,t),t)$ given by~\eqref{cont}, we represent $x_\pi(\varepsilon)$ as
\begin{equation}\label{xpi}
\begin{aligned}
x_\pi(t_0+\varepsilon)=x^0&+\varepsilon\Big(\sum_{i\in S_1}f_i(x^0,t_0)a_i(x^0,t_0)+\sum_{(j_1,j_2)\in S_2}[f_{j_1},f_{j_2}](x^0,t_0)a_{j_1j_2}(x^0,t_0)\Big)\\
&+R(t_0+\varepsilon)=x^0+\varepsilon\mathcal F(x^0,t_0)a(x^0,t_0)+R(t_0+\varepsilon),\\
R(t_0+\varepsilon)=r_1&(t_0+\varepsilon)+r_2(t_0+\varepsilon)+r_3(t_0+\varepsilon),
\end{aligned}
\end{equation}
where $r_1,r_2$ are given by~\eqref{r1} and
$$
\begin{aligned}
r_3(t_0+\varepsilon)=&\varepsilon^{3/2}\sum_{i\in S_1}\sum_{{(j_1,j_2)\in S_2}}[f_i,f_{j_1}](x^0,t_0)a_{i}(x^0,t_0) \sqrt{\frac{|a_{j_1j_2}(x^0,t_0)|}{\pi\kappa_{j_1j_2}}}\\
&+\frac{\varepsilon^2}{2}\sum_{i_1,i_2\in S_1}L_{f_{i_2}}f_{i_1}(x^0,t_0)a_{i1}(x^0,t_0)a_{i2}(x^0,t_0).
\end{aligned}
$$
Using Assumption~\ref{as_f}.2)--\ref{as_f}.3), we  estimate $r_1(\varepsilon),r_2(\varepsilon),r_3(\varepsilon)$ as follows:
$$
\begin{aligned}
\|r_1(t_0+\varepsilon)\|&\le \frac{\varepsilon^3}{6}H_{fx}U^3(x^0,t_0)\le \frac{c_u^3H_{fx}}{6}\big(\varepsilon\gamma{M_F}\|\nabla_xP(x^0,t_0)\|\big)^{3/2},\\
\|r_2(t_0+\varepsilon)\|&\le \frac{\varepsilon^2}{2}L_{ft}U(x^0,t_0)+\frac{\varepsilon^3}{6}H_{ft}U^2(x^0,t_0) \\
&\le \frac{ L_{ft}}{2}\big(\varepsilon^3\gamma{M_F}\|\nabla_xP(x^0,t_0)\|\big)^{1/2}+\frac{ H_{ft}}{6}\big(\varepsilon^2\gamma{M_F}\|\nabla_xP(x^0,t_0)\|\big),\\
\|r_3(t_0+\varepsilon)\|&\le 2\varepsilon^{3/2}L_{2f}\sqrt{|S_1|}\Big(\sum_{(j_1,j_2)\in S_2}\kappa_{j_1j_2}^{-2/3}\Big)^{3/4}\|a(x^0,t_0)\|^{3/2} +\frac{\varepsilon^2}{2}L_{2f}\|a(x^0,t_0)\|^2\\
&\le 2L_{2f}\sqrt{|S_1|}\Big(\sum_{(j_1,j_2)\in S_2}\kappa_{j_1j_2}^{-2/3}\Big)^{3/4}\big(\varepsilon\gamma{M_F}\|\nabla_xP(x^0,t_0)\|\big)^{3/2} \\ &\qquad+\frac{L_{2f}}{2}\big(\varepsilon\gamma{M_F}\|\nabla_xP(x^0,t_0)\|\big)^2.
\end{aligned}
$$
Thus, for any $t_0\ge0$, $x^0\in D$, $\gamma>0$, and $\varepsilon\in\big(0,\min\{\varepsilon_0(\gamma),\varepsilon_1(\gamma)\}\big]$,
\begin{equation}\label{Rest}
\|R(t_0+\varepsilon)\|\le c_{R1}\big(\varepsilon^3\gamma{M_F}\|\nabla_xP(x^0,t_0)\|\big)^{1/2}+c_{R2}\big(\varepsilon\gamma{M_F}\|\nabla_xP(x^0,t_0)\|\big)^{3/2},
\end{equation}
where
$$
\begin{aligned}
c_{R1}=&\frac{L_{ft}}{2}+\frac{H_{ft}}{6}\sqrt{{M_F} L_{Px}},\\
c_{R2}=& \frac{H_{fx}}{6}+2L_{2f}\sqrt{|S_1|}\Big(\sum_{(j_1,j_2)\in S_2}\kappa_{j_1j_2}^{-2/3}\Big)^{3/4}+\frac{L_{2f}}{2}\sqrt{{M_F} L_{Px}}.
\end{aligned}
$$
Finally, inserting~\eqref{a1} into~\eqref{xpi}, we obtain the representation
\begin{equation}\label{xpieps}
  x_\pi(t_0+\varepsilon)=x^0-\varepsilon\gamma \nabla_x P(x^0,t_0)+R(t_0+\varepsilon),
\end{equation}
and thus reach the goal of Step~2.


\emph{Step~3. In this step we will estimate the value $P(x_\pi(t_0+\varepsilon),t_0+\varepsilon)$. Given a $\rho>0$, we will show that there exist an $\varepsilon_2(\gamma)>0$  and a $\bar\gamma(\rho)>0$  such that, for any $\gamma\ge\bar\gamma(\rho)$ and $\varepsilon\in\big(0,\min\{\varepsilon_0(\gamma),\varepsilon_1(\gamma),\varepsilon_2(\gamma)\}\big]$, the $\pi_\varepsilon$-solution $x_\pi(t)$ of system~\eqref{Sigma} with the initial data $x_\pi(t_0)=x^0\in D$ and the controls $u_k=u_k^\varepsilon(a(x,t),t)$ satisfies the property $$P(x_\pi(t_0+\varepsilon),t_0+\varepsilon)\le P(x^0,t_0)\text{ provided that }\|\nabla P_x(x^0,t_0)\|\ge\frac{\rho}{2}.$$}

To analyze the value  $P(x_\pi(t_0+\varepsilon),t_0+\varepsilon)$, we use the Taylor formula with Lagrange's form of the remainder for $P(x_\pi(t_0+\varepsilon),t_0)$:
$$
\begin{aligned}
P(x_\pi(t_0+\varepsilon),t_0+\varepsilon)= &P(x_\pi(t_0+\varepsilon),t_0)+P(x_\pi(t_0+\varepsilon),t_0+\varepsilon)- P_\pi(x(t_0+\varepsilon),t_0)\\
=&P(x^0,t_0)+\nabla_xP(x^0,t_0)(x_\pi(t_0+\varepsilon)-x^0)^\top\\
+\frac{1}{2}\sum_{i,j=1}^n\frac{\partial^2P(x,t_0)}{\partial x_i\partial x_j}&\Big|_{x=x^0+\theta(x_\pi(\varepsilon)-x^0),\theta\in[0,1]} \big({x_\pi}_i(t_0+\varepsilon)-x^0_i\big)\big({x_\pi}_j(t_0+\varepsilon)-x^0_j\big)\\
&+P(x_\pi(t_0+\varepsilon),t_0+\varepsilon)- P(x_\pi(t_0+\varepsilon),t_0).
\end{aligned}
$$
Inserting~\eqref{xpieps} into the obtained representation and using Assumption~\ref{as_Pbounds}.2), \ref{as_Pbounds}.4) with $\{\widetilde{\mathcal D}_t\}_{t\ge0}$ given by~\eqref{tilD}, we obtain
$$
\begin{aligned}
P(x_\pi(t_0+\varepsilon),t_0+\varepsilon)\le &P(x^0,t_0)-\varepsilon\gamma\|\nabla_xP(x^0,t_0)\|^2+\|\nabla_xP(x^0,t_0)\|\|R(t_0+\varepsilon)\|\\
&+\frac{H_{Px}}{2}\big(\varepsilon\gamma\|\nabla_xP(x^0,t_0)\|+\|R(t_0+\varepsilon)\|\big)^2+L_{Pt}\varepsilon.
\end{aligned}
$$
With the use of estimate~\eqref{Rest} we conclude that, for all $\varepsilon\in\big(0,\min\{\varepsilon_0(\gamma),\varepsilon_1(\gamma)\}\big]$,
\begin{equation}\label{Px_epsg}
\begin{aligned}
P(x_\pi(t_0+\varepsilon),t_0+\varepsilon)\le P(x^0,t_0)&-\varepsilon\gamma\|\nabla_xP(x^0,t_0)\|^2\big(1- \sqrt{\varepsilon \gamma} c_{p1}\big)\\
&+\varepsilon^{3/2}\sqrt \gamma\|\nabla_xP(x^0,t_0)\| c_{p2}+L_{Pt}\varepsilon,
\end{aligned}
\end{equation}
where $c_{p1}=c_{R2}\sqrt{{M_F}^3L_{Px}} +H_{Px}\big(1+
c_{R2}^2{M_F}^3L_{Px} +2c_{R1}c_{R2}\varepsilon{M_F}^2\big)$, $c_{p2}=c_{R1}\sqrt{{M_F} L_{Px}}+{H_{Px}}c_{R1}^2{M_F}\varepsilon$.
Therefore,
$$
P(x_\pi(t_0+\varepsilon),t_0+\varepsilon)\le P(x^0,t_0)-\varepsilon\Big(\gamma\|\nabla_xP(x^0,t_0)\|^2\big(1- \sqrt{\varepsilon \gamma} c_{p1}\big)-\|\nabla_xP(x^0,t_0)\| c_{p2}-L_{Pt}\Big).
$$

 Assume that $\|\nabla_x P(x^0,t_0)\|\ge\dfrac{\rho}{2}>0$. Then the above inequality can be rewritten as
$$
P(x_\pi(t_0+\varepsilon),t_0+\varepsilon)\le P(x^0,t_0)-\varepsilon\|\nabla_xP(x^0,t_0)\|^2\Big(\gamma\big(1- \sqrt{\varepsilon \gamma} c_{p1}\big)-\frac{ 2c_{p2}}{\rho}-\frac{4L_{Pt}}{\rho^2}\Big).
$$
Let us fix any $\sigma\in(0,1)$, {$\tilde\gamma>0$}, and put
$$
\varepsilon_2(\gamma)=\frac{(1-\sigma)^2}{\gamma c_{p1}^2},\,\bar\gamma(\rho)=\dfrac{1}{\sigma}\Big(\tilde\gamma+\dfrac{ 2c_{p2}}{\rho}+\dfrac{4L_{Pt}}{\rho^2}\Big).
$$
We obtain that, for any $\gamma\ge\bar\gamma(\rho)$, $\varepsilon\in\big(0,\min\{\varepsilon_0(\gamma),\varepsilon_1(\gamma),
\varepsilon_2(\gamma)\}\big]$,
{\begin{equation}\label{Pdecay}
  P(x_\pi(t_0+\varepsilon),t_0+\varepsilon)\le P(x^0,t_0)-\varepsilon\tilde\gamma\|\nabla_xP(x^0,t_0)\|^2,
\end{equation}
}
that is,
$$
P(x_\pi(t_0+\varepsilon),t_0+\varepsilon)\le P(x^0,t_0)
$$
whenever $\|\nabla_x P(x^0,t_0)\|\ge\dfrac{\rho}{2}$. Moreover, the obtained inequality is strict if $\|\nabla_x P(x^0,t_0)\|>\dfrac{\rho}{2}$.
Similarly to Step~2, we emphasize that the results of the current step hold for any $x^0\in \{\widetilde{\mathcal D}_t\}_{t\ge0}$ provided that the corresponding $\pi_\varepsilon$-solution $x_\pi(t)$ is well-defined in $\{\widetilde{\mathcal D}_t\}_{t\ge0}$ for all $t\in[t_0,t_0+\varepsilon]$.

\emph{Step~4.
The goal of this step is to ensure the following property: after some finite time $t=T\ge0$, the $\pi_\varepsilon$-solution $x_\pi(t)$ of system~\eqref{Sigma} enters the set
$
{\mathcal L}_{t_0+T}^{\nabla P,{\rho/2}}$
and remains in
$
{\mathcal L}_t^{\nabla P,{\rho}}$, ${t\in[t_0+T,t_0+T+\varepsilon]}$.
More precisely, we will show that there exists an $N\in\mathbb N\cup\{0\}$ such that  $\|\nabla_x P(x_\pi(t_0+N\varepsilon),t_0+N\varepsilon)\|\le \dfrac{\rho}{2}$ and, moreover, there exists an $\varepsilon_{3}(\gamma)>0$  such that, for any $\gamma\ge\bar\gamma(\rho)$ and $\varepsilon\in\big(0,\min\{\varepsilon_0(\gamma),\dots,\varepsilon_{3}(\gamma)\}\big]$, the $\pi_\varepsilon$-solution $x_\pi(t)$ of system~\eqref{Sigma} with the initial data $x_\pi(t_0)=x^0$ and the controls $u_k=u_k^\varepsilon(a(x,t),t)$ satisfies the property
$$
\|\nabla_x P(x_\pi(t),t)\|\le{\rho}\text{ for all }t\in[t_0+N\varepsilon,t_0+(N+1)\varepsilon].
$$
}

 We have obtained in Step~3 that $x_\pi(t_0+\varepsilon)\in\mathcal L_{t_0+\varepsilon}^{P,P(x^0,t_0)}$. Applying the results of Step~1 with the same choice of parameters $\varepsilon$ and $\gamma$ and the initial data $x_\pi(t_0+\varepsilon)\in \mathcal L^{P,P(x^0,t_0)}$, ${t\ge0}$, we get $x_\pi(t)\in \widetilde{\mathcal D}_t$ for all $t\in[t_0,t_0+2\varepsilon]$. Furthermore, we may repeat Steps~2--3 and  conclude that
$$P(x_\pi(t_0+2\varepsilon),t_0+2\varepsilon)\le P(x_\pi(t_0+\varepsilon),t_0+\varepsilon)\text{ provided that }\|\nabla P_x(x_\pi(t_0+\varepsilon),t_0+\varepsilon)\|\ge\frac{\rho}{2}.$$

Let us show that there exists an $N\in\mathbb N\cup\{0\}$ such that $\|\nabla_x P(x_\pi(t_0+N\varepsilon),t_0+N\varepsilon)\|\le \dfrac{\rho}{2}$.
  Indeed, assume $\|\nabla_x P(x_\pi(t_0+N\varepsilon),t_0+N\varepsilon)\|> \dfrac{\rho}{2}$ for all  $N\in\mathbb N\cup\{0\}$.
  Then iterating Step~3 and inequality~\eqref{Pdecay}, we conclude that, for any $N\in\mathbb N$,
  $$
 P(x_\pi(t_0+N\varepsilon),t_0+N\varepsilon)\le P(x^0,t_0)-\varepsilon\tilde\gamma\sum_{k=0}^{N-1}\|\nabla_x P(x_\pi(t_0+k\varepsilon),t_0+k\varepsilon)\|^2\le P(x^0,t_0)-\dfrac{N\varepsilon\tilde\gamma\rho^2}{4},
$$
and
$$
 P(x_\pi(t_0+N\varepsilon),t_0+N\varepsilon)-m_P\le P(x^0,t_0)-m_P-\dfrac{N\varepsilon\tilde\gamma\rho^2}{4}.
$$
Obviously, the right-hand side of the latter inequality becomes strictly negative for $N>\Big[\dfrac{4(P(x^0,t_0)-m_P)}{\varepsilon\tilde\gamma\rho^2}\Big]$, while the left-hand side remains non-negative. The obtained contradiction proves, that after the time $T=N\varepsilon$, $N\in\mathbb N\cup\{0\}$, the $\pi_\varepsilon$-solution $x_\pi(t)$ of system~\eqref{Sigma} enters the set
$
{\mathcal L}_{t_0+T}^{\nabla P,{\rho/2}}$

The next goal  is to ensure that the $\pi_\varepsilon$-solution $x_\pi(t)$ of system~\eqref{Sigma}  remains in the family of sets
$
{\mathcal L}_t^{\nabla P,{\rho}}$ for ${t\in[t_0+T,t_0+T+\varepsilon]}$.
Because of Assumption~\ref{as_Psets},
 $x_\pi(t)\in\widetilde{\mathcal D}_t$ for $t\in[t_0+T,t_0+T+\varepsilon]$.
Applying Assumption~\ref{as_Pbounds}.3) with $\{\widetilde{\mathcal D}_t\}_{t\ge0}$ given by~\eqref{tilD}, we get
$$
\begin{aligned}
\|\nabla_x P(x_\pi(t),t)\|\le &\|\nabla_x P(x_\pi(t_0+T),t_0+T)\|\\
&+\|\nabla_x P(x_\pi(t),t)-\nabla_x P(x_\pi(t_0+T),t_0+T)\|\\
\le &\frac{\rho}{2}+L_{2Px}\|x_\pi(t)-x_\pi(t_0+T)\|+L_{2Pt}\|t-T\|.
\end{aligned}
$$
Since the obtained estimate holds for all $t\in[t_0+T,t_0+T+\varepsilon]$, we apply estimate~\eqref{xx0}:
$$
\begin{aligned}
\|\nabla_x P(x_\pi(t),t)\|\le &\frac{\rho}{2}+L_{2Px}\sqrt{\frac{\varepsilon\gamma{M_F} \rho}{2}} e^{L_{fx}\varepsilon}c_uM_f+L_{2Pt}\varepsilon.
\end{aligned}
$$
Let us take $\varepsilon_{3}(\gamma)$ as the smallest positive root of the equation
$$
L_{2Px}\sqrt{\frac{\varepsilon\gamma{M_F} \rho}{2}} e^{L_{fx}\varepsilon}c_uM_f+L_{2Pt}\varepsilon=\frac{\rho}{2}.
$$
Then for any $\gamma\ge\bar\gamma(\rho)$ and $\varepsilon\in\big(0,\min\{\varepsilon_0(\gamma),\dots,\varepsilon_{3}(\gamma)\}\big]$,
$$
\|\nabla_x P(x_\pi(t),t)\|\le \rho\text{ for all }t\in[t_0+T,t_0+T+\varepsilon].
$$

\emph{Step~5. This step summarizes all the obtained results and completes the proof of this lemma.}

From Steps~3 and~4, there exists an $N\in\mathbb N\cup\{0\}$ such that $\|\nabla_x P(x_\pi(t_0+j\varepsilon),t_0+j\varepsilon)\|\ge \dfrac{\rho}{2}$ for $j=0,1,\dots,N-1$, and
$\|\nabla_x P(x_\pi(t_0+T),t_0+T)\|\le \dfrac{\rho}{2}$. Thus,
$$
P(x_\pi(t_0+T),t_0+T)\le P(x_\pi(t_0+(N-1)\varepsilon),t_0+(N-1)\varepsilon)\le ... \le P(x^0,t_0).
$$
and $\|\nabla_x P(x_\pi(t),t)\|\le \rho$ for all $t\in[t_0+T,t_0+T+\varepsilon]$. Consequently, $x_\pi(t)$ is well-defined in $\widetilde{\mathcal D}_t$ for all $t\in[t_0,t_0+T+\varepsilon]$, and
$$
P(x_\pi(t),t)\le \sup\limits_{\xi\in \mathcal L^{\nabla P,\rho}_t}P(\xi,t)\text{ for }t\in[t_0+T,t_0+T+\varepsilon].
$$
Next, consider  two possible scenarios:

S1) $\|\nabla_x P(x_\pi(t_0+T+\varepsilon),t_0+T+\varepsilon)\|\le \dfrac{\rho}{2}$.

Then similarly to Step~4 we have that $\|\nabla_x P(x_\pi(t),t)\|\le \rho$ for  $t\in[t_0+T+\varepsilon,t_0+T+2\varepsilon]$, which implies that $x_\pi(t)$ is well-defined in $\widetilde{\mathcal D}_t$  for all $t\in[t_0,t_0+T+2\varepsilon]$ and
$$
P(x_\pi(t),t)\le  \sup\limits_{\xi\in \mathcal L^{\nabla P,\rho}_t}P(\xi,t)\text{ for }t\in[t_0+T,t_0+T+2\varepsilon].
$$

S2) $\dfrac{\rho}{2}<\|\nabla_x P(x_\pi(t_0+T+\varepsilon),t_0+T+\varepsilon)\|\le \rho$.

Repeating Steps~3--4, we conclude that there exists an integer $N_2\ge N+2$ such that $\|\nabla_x P(x_\pi(t_0+j\varepsilon),t_0+j\varepsilon)\|> \dfrac{\rho}{2}$ for $j=N+1,\dots,N_2-1$, and
$\|\nabla_x P(x_\pi(t_0+N_2\varepsilon),t_0+N_2\varepsilon)\|\le \dfrac{\rho}{2}$. Besides,
$$
\begin{aligned}
P(x_\pi(t_0+N_2\varepsilon),t_0+N_2\varepsilon)&\le P(x_\pi(t_0+(N_2-1)\varepsilon),t_0+(N_2-1)\varepsilon)\le ... \\
&\le P(x_\pi(t_0+T+\varepsilon),t_0+T+\varepsilon) \le  \sup\limits_{\xi\in \mathcal L^{\nabla P,\rho}_{T+\varepsilon}}P(\xi,T+\varepsilon).
\end{aligned}
$$
Obviously,
$$
P(x_\pi(t_0+N_2\varepsilon),t_0+N_2\varepsilon)\le P^*(\rho,\lambda)=\sup_{t\ge t_0+ T} \sup\limits_{\xi\in \mathcal L^{\nabla P,\rho}_{t}}P(\xi,t).
$$
To estimate the values of $P(x_\pi(t),t)$ for $t\in[t_0+T+\varepsilon,t_0+N_2\varepsilon]$,
denote the integer part of $\dfrac{t-t_0}{\varepsilon}$ as $\Big[\dfrac{t-t_0}{\varepsilon}\Big]$ and observe that $0\le t-t_0-\Big[\dfrac{t-t_0}{\varepsilon}\Big]\varepsilon\le\varepsilon$.
Then by Assumption~\ref{as_Pbounds}.1)--\ref{as_Pbounds}.2) and estimate~\eqref{xx0},
$$
\begin{aligned}
P(x_\pi(t),t)\le& P\Big(x_\pi\Big(t_0+\Big[\dfrac{t-t_0}{\varepsilon}\Big]\varepsilon\Big),t_0+\Big[\dfrac{t-t_0}{\varepsilon}\Big]\varepsilon\Big)\\
&+\Big|P(x_\pi(t),t)-P\Big(x_\pi\Big(t_0+\Big[\dfrac{t-t_0}{\varepsilon}\Big]\varepsilon\Big),t_0+\Big[\dfrac{t-t_0}{\varepsilon}\Big]\varepsilon\Big)\Big|\\
\le &P^*(\rho,\lambda)+ L_{Px}\Big\|x_\pi(t)-x_\pi\Big(t_0+\Big[\dfrac{t-t_0}{\varepsilon}\Big]\varepsilon\Big)\Big\|+L_{Pt}\varepsilon\\
\le & P^*(\rho,\lambda)+L_{Px}\sqrt{L_{Px}\varepsilon\gamma{M_F}} e^{L_{fx}\varepsilon}c_uM_f+L_{Pt}\varepsilon.
\end{aligned}
$$%
From~\eqref{eps1}, for any $\gamma\ge\bar\gamma(\rho)$ and $\varepsilon\in\big(0,\min\{\varepsilon_0(\gamma),\varepsilon_1(\gamma),\varepsilon_2(\gamma)\}\big]$,
$$
P(x_\pi(t),t)\le  P^*(\rho,\lambda)+\lambda\text{ for }t\in[t_0+T+\varepsilon,t_0+N_2\varepsilon].
$$

Iterating S1)--S2), we obtain that $x_\pi(t)$ is well-defined in $\widetilde{\mathcal D}_t$ for all $t\ge t_0$ and $x_\pi(t)\in \{x:P(x,t)\le P^*(\rho,\lambda)+\lambda\}$ for $t\ge T+\varepsilon$.
  As $\lambda$ and $\rho$ are assumed arbitrary, the proof of Lemma~1 is completed.
\qed

\section{Proof of Theorem~\ref{thm_step1Pxt}}\label{proof_step1Pxt}

The first two steps and the beginning of the third step of the proof are similar to the proof of Lemma~\ref{lem_step1}.
We summarize the main differences and results  as follows:
\begin{itemize}
  \item For any $\rho>0$ such that $\mathcal L_t^{P,m_P+\rho}\subset D$, $t\ge0$, we define the sets~\eqref{tilD} as
  $$
  \widetilde {\mathcal D}_t=\mathcal L_t^{P,P(x^0,t_0)+m_P+\rho}\subset D.
  $$
  \item $\varepsilon_1(\gamma)$ is the smallest positive root of the equation
  \begin{equation}\label{eps1}
\sqrt{\varepsilon\gamma{M_F}} e^{L_{fx}\varepsilon}c_uM_f L_{Px}^{3/2}+\varepsilon L_{Pt}=\frac{\rho}{4}.
\end{equation}
Similar to the outcome of Step~1, for any  $\gamma>0$  and $\varepsilon\in\big(0,\min\{\varepsilon_0(\gamma),\varepsilon_1(\gamma)\}\big]$, we have
\begin{equation}\label{estPt}
P(x_\pi(t),t)\le P(x^0,t_0)+\frac{\rho}{4}\text{ for all }t\in[t_0,t_0+\varepsilon].
\end{equation}
\item 

For all $\gamma>0$ and $\varepsilon\in(0,\tilde\varepsilon(\gamma)=\min\{\bar\varepsilon_0(\gamma),\varepsilon_1(\gamma)\}]$, 
the $\pi_\varepsilon$-solution $x_\pi(t)$ of system~\eqref{Sigma} with the initial data $x_\pi(t_0)=x^0$ and the controls $u_k=u_k^\varepsilon(a(x,t),t)$ defined by~\eqref{cont} satisfies the property
{\scriptsize
$$
P(x_\pi(t_0+\varepsilon),t_0+\varepsilon)\le P(x^0,t_0)-\varepsilon\Big(\gamma\|\nabla_xP(x^0,t_0)\|^2\big(1- \sqrt{\varepsilon \gamma} c_{p1}\big)-\|\nabla_xP(x^0,t_0)\| c_{p2}-L_{Pt}\Big).
$$}
\end{itemize}
Now we come to the main part of the proof.
Using the above estimate, Assumption~\ref{as_Pbounds}.2) and property~\eqref{Pgrad}, we obtain
\begin{equation}\label{cp3}
\begin{aligned}
P(x_\pi(t_0&+\varepsilon),t_0+\varepsilon)-m_P\\
&\le (P(x^0,t_0)-m_P)\left(1-\varepsilon\gamma\mu(P(x^0,t_0)-m_P)^{\nu-1}\big(1- \sqrt{\varepsilon \gamma} c_{p1}\big)\right)+\varepsilon c_{p3},
\end{aligned}
\end{equation}
where $c_{p3}=L_{Px} c_{p2}+L_{Pt}$, $\nu\ge 1$.
For an arbitrary $\rho>0$, $\gamma^*>0$, let
$$
\bar\gamma(\rho)=\gamma^*+\frac{2^{2\nu} c_{p3}}{\rho^\nu\mu},\, \varepsilon_2(\gamma)=\frac{(\gamma-\bar\gamma(\rho))^2}{\gamma^3 c_{p1}^2}
$$
Then, for any $\gamma>\bar\gamma(\rho)$, $\varepsilon\in\big(0,\min\{\tilde\varepsilon(\gamma),\varepsilon_2(\gamma)\}\big)$, the following properties hold:
\begin{itemize}
\item[i)]
$$
\begin{aligned}
P(x_\pi(t_0&+\varepsilon),t_0+\varepsilon)-m_P\\
&\le (P(x^0,t_0)-m_P)\left(1-\varepsilon\bar\gamma\mu(P(x^0,t_0)-m_P)^{\nu-1}\right)+\varepsilon c_{p3}.
\end{aligned}
$$
  \item[ii)] If $P(x^0,t_0)-m_P\ge\tfrac{\rho}{4}>0$, then
  $$
\begin{aligned}
  P(x_\pi(t_0+\varepsilon),t_0+\varepsilon)-m_P&< (P(x^0,t_0)-m_P)\big(1-\varepsilon\gamma^*\mu(P(x^0,t_0)-m_P)^{\nu-1}\big)\\
  &< P(x^0,t_0)-m_P.
\end{aligned}
  $$
 This means that $x(\varepsilon)\in \mathcal L_t^{P,P(x^0,t_0)}$, ${t\ge 0}$, and by~\eqref{estPt},   $x_\pi(t)\in \widetilde{\mathcal D}_t$ for all $t\in[t_0,t_0+\varepsilon]$.
  \item[iii)] If $P(x^0,t_0)-m_P<\tfrac{\rho}{4} $, then~\eqref{estPt} immediately implies
    $x_\pi(t)\in \widetilde{\mathcal D}_t$ for all $t\in[t_0,t_0+\varepsilon]$, and $P(x_\pi(t_0+\varepsilon),t_0+\varepsilon)-m_P\le\tfrac{\rho}{2}$. Considering again the two cases $ P(x_\pi(t_0+\varepsilon),t_0+\varepsilon)-m_P\ge\tfrac{\rho}{4}$ and $ P(x_\pi(t_0+\varepsilon),t_0+\varepsilon)-m_P<\tfrac{\pi}{4}$, we see that  $x_\pi(t)\in \widetilde{\mathcal D}_t$ for all $t\in[t_0,t_0+2\varepsilon]$.
\end{itemize}
Repeating ii) and iii), we conclude that $x_\pi(t)\in \widetilde{\mathcal D}_t\subset D$ for all $t\ge0$.

It remains to estimate the decay rate of the function $P(x_\pi(t),t)$ as $t\to+\infty$.

I) If $\nu=1$, then, for all $j\in\mathbb N$,
$$
\begin{aligned}
P(x_\pi(t_0+j\varepsilon),t_0+j\varepsilon)-m_P&\le (P(x_\pi(t_0+(j-1)\varepsilon),t_0+(j-1)\varepsilon-m_P)\left(1-\varepsilon\gamma\sigma\mu\right)+\varepsilon c_{p3}\\
&\le (P(x^0,t_0)-m_P)\left(1-\varepsilon\bar\gamma\mu\right)^j+\varepsilon c_{p3}\sum_{i=0}^{j-1}\left(1-\varepsilon\bar\gamma\mu\right)^{i}.
\end{aligned}
$$
Using the property
$$
1-\varepsilon\bar\gamma\mu\le e^{-\varepsilon\bar\gamma\mu}
$$
and calculating
$$
\sum\limits_{i=0}^{j-1}\left(1-\varepsilon\bar\gamma\mu\right)^{i}=\frac{1-\left(1-\varepsilon\bar\gamma\mu\right)^j}{\varepsilon\bar\gamma\mu} <\frac{1}{\varepsilon\bar\gamma\mu},
$$
we obtain
$$
P(x_\pi(t_0+j\varepsilon),t_0+j\varepsilon)-m_P \le  (P(x^0,t_0)-m_P)e^{-j\varepsilon\bar\gamma\mu}+\frac{c_{p3}}{\bar\gamma\mu}.
$$
Under the above choice of $\bar\gamma$,
 for any $\gamma\ge\bar\gamma(\rho)$ and $\varepsilon\in\big(0,\min\{\tilde\varepsilon(\gamma),\varepsilon_2(\gamma)\}\big]$,
\begin{equation}\label{Pest}
P(x_\pi(t_0+j\varepsilon),t_0+j\varepsilon)-m_P\le  (P(x^0,t_0)-m_P)e^{-j\varepsilon\bar\gamma\mu}+\frac{\rho}{2}.
\end{equation}
Hence,
$$
P(x_\pi(t),t)-m_P\le  (P(x^0,t_0)-m_P)e^{-\bar\gamma\mu(t-t_0)}+\frac{\rho}{2}\text{ for each }t=t_0+j\varepsilon,\,j\in\mathbb N\cup\{0\}.
$$
For an arbitrary $t\ge t_0$, estimate~\eqref{estPt} yields
$$
\begin{aligned}
P(x_\pi(t),t)-m_P&\le P\Big(x_\pi\Big(\Big[\frac{t-t_0}{\varepsilon}\Big]\Big),\Big[\frac{t-t_0}{\varepsilon}\Big]\Big)-m_P+\frac{\rho}{2}\\
&\le  (P(x^0,t_0)-m_P)e^{-\bar\gamma\mu\Big[\frac{t-t_0}{\varepsilon}\Big]\varepsilon}+\rho\\
&\le  (P(x^0,t_0)-m_P)e^{-\bar\gamma\mu(t-t_0-\varepsilon)}+\rho\text{ for all }t\ge t_0.
\end{aligned}
$$

II) If $\nu>1$, then, for all $j\in\mathbb N$,
$$
\begin{aligned}
P(x_\pi(t_0&+j\varepsilon),t_0+j\varepsilon)-m_P\le (P(x_\pi(t_0+(j-1)\varepsilon),t_0+(j-1)\varepsilon)-m_P)\\
&\times\left(1-\varepsilon\bar\gamma\mu(P(x_\pi(t_0+(j-1)\varepsilon),t_0+(j-1)\varepsilon)-m_P)^{\nu-1}\right)+\varepsilon c_{p3}.
\end{aligned}
$$
Let us show that there exists an $N\ge 0$ such that
$
P(x_\pi(t_0+j\varepsilon),t_0+j\varepsilon)-m_P\le\dfrac{\rho}{2}.
$

Assume the contrary: $P(x_\pi(t_0+j\varepsilon),t_0+j\varepsilon)-m_P>\frac{\rho}{2}$ for all $j\in\mathbb N\cup \{0\}$.
Then
$$
\begin{aligned}
P(x_\pi&(t_0+j\varepsilon),t_0+j\varepsilon)-m_P\le (P(x_\pi(t_0+(j-1)\varepsilon),t_0+(j-1)\varepsilon)-m_P)\\
&\times\left(1-\varepsilon\mu\Big(\bar\gamma-\frac{2^\nu c_{p3}}{\rho^\nu\mu}\Big)(P(x_\pi(t_0+(j-1)\varepsilon),t_0+(j-1)\varepsilon)-m_P)^{\nu-1}\right)\\
&\qquad\qquad\qquad\qquad\qquad=(P(x_\pi(t_0+(j-1)\varepsilon),t_0+(j-1)\varepsilon)-m_P)\\
&\times\left(1-\varepsilon\mu\gamma^*(P(x_\pi(t_0+(j-1)\varepsilon),t_0+(j-1)\varepsilon)-m_P)^{\nu-1}\right)
\end{aligned}
$$
To obtain decay rate estimates, we exploit the property of a strictly convex function and its tangent line: for any $\theta\in\mathbb R$, $k>0$, $1- \theta\le (1+k\theta)^{-\frac{1}{k}}$. Thus,
$$
\begin{aligned}
1-\varepsilon\mu\gamma^*(&P(x_\pi(t_0+(j-1)\varepsilon),t_0+(j-1)\varepsilon)-m_P)^{\nu-1}\\ &\le\big(1+\varepsilon\mu\gamma^*(\nu-1)(P(x_\pi(t_0+(j-1)\varepsilon),t_0+(j-1)\varepsilon)-m_P)^{\nu-1}\big)^{\frac{1}{1-\nu}}
\end{aligned}
$$
and
$$
\begin{aligned}
P(x_\pi&(t_0+j\varepsilon),t_0+j\varepsilon)-m_P\le (P(x_\pi(t_0+(j-1)\varepsilon),t_0+(j-1)\varepsilon)-m_P)\\
&\le\big((P(x_\pi(t_0+(j-1)\varepsilon),t_0+(j-1)\varepsilon)-m_P)^{1-\nu}+\varepsilon\mu\gamma^*(\nu-1)\big)^{\frac{1}{1-\nu}}\\
&\le\big((P(x^0,t_0)-m_P)^{1-\nu}+j\varepsilon\mu\gamma^*(\nu-1)\big)^{\frac{1}{1-\nu}}\;\;\text{ for all }j\in\mathbb N\cup\{0\}.
\end{aligned}
$$
Then, for $j\ge T=\dfrac{1}{\varepsilon\mu\gamma^*(\nu-1)}\Big(\Big(\dfrac{\rho}{2}\Big)^{1-\nu}-(P(x^0,t_0)-m_P)^{1-\nu}\Big)$, we get
$$
P(x_\pi(t_0+j\varepsilon),t_0+j\varepsilon)-m_P\le\frac{\rho}{2},
$$
which gives  contradiction. Thus, there exists an $N\in\mathbb N\cup\{0\}$ such that
$$
\begin{aligned}
P(x_\pi&(t_0+j\varepsilon),t_0+j\varepsilon)-m_P\\
&\le\big((P(x^0,t_0)-m_P)^{1-\nu}+j\varepsilon\mu\gamma^*(\nu-1)\big)^{\frac{1}{1-\nu}}\;\;\text{ for all }j=0,1,\dots,N,
\end{aligned}
$$
and
$$
P(x_\pi(t_0+N\varepsilon),t_0+N\varepsilon)-m_P\le\frac{\rho}{2}.
$$

For an arbitrary $t\in[t_0,t_0+N\varepsilon]$, we again exploit  the property
$$
\begin{aligned}
P(x_\pi(t),t)&\le P\Big(x_\pi\Big(\Big[\frac{t-t_0}{\varepsilon}\Big]\Big),\Big[\frac{t-t_0}{\varepsilon}\Big]\Big)+\frac{\rho}{2}\\
&\le \big((P(x^0,t_0)-m_P)^{1-\nu}+\Big[\frac{t-t_0}{\varepsilon}\Big]\varepsilon\mu\gamma^*(\nu-1)\big)^{\frac{1}{1-\nu}}+\frac{\rho}{2}\\
&\le \big((P(x^0,t_0)-m_P)^{1-\nu}+\mu\gamma^*(\nu-1)(t-t_0-\varepsilon)\big)^{\frac{1}{1-\nu}}+\frac{\rho}{2}.
%
\end{aligned}
$$
Similarly to the derivation of~\eqref{estPt}, we can show that, for any $\varepsilon\in\big(0,\min\{\varepsilon_0(\gamma),\varepsilon_1(\gamma)\}\big]$,
$$
P(x_\pi(t),t)-m_P\le P(x_\pi(t_0+N\varepsilon),t_0+N\varepsilon)-m_P+\frac{\rho}{2}\le \rho\text{ for all }t\in[t_0+N\varepsilon,t_0+(N+1)\varepsilon].
$$
Then two cases are possible:

\begin{itemize}
  \item if $P(x_\pi(t_0+(N+1)\varepsilon),t_0+(N+1)\varepsilon)-m_P\le \dfrac{\rho}{2}$, then $P(x_\pi(t),t)-m_P\le \rho$ for all $t\in[t_0+(N+1)\varepsilon,t_0+(N+2)\varepsilon]$;
  \item if $\dfrac{\rho}{2}<P(x_\pi(t_0+(N+1)\varepsilon),t_0+(N+1)\varepsilon)-m_P\le \rho$, then
   $$
\begin{aligned}
P&(x_\pi(t),t)-m_P \\
&\le \big((P(x_\pi(t_0+(N+1)\varepsilon),t_0+(N+1)\varepsilon)-m_P)^{1-\nu}+\mu\gamma^*(\nu-1)(t-t_0-\varepsilon)\big)^{\frac{1}{1-\nu}}\\
&\le \big(\rho^{1-\nu}+\mu\gamma^*(\nu-1)(t-t_0-\varepsilon)\big)^{\frac{1}{1-\nu}}\le\rho,\;\text{for all}\;t\in[t_0+(N+1)\varepsilon,t_0+(N+2)\varepsilon].
\end{aligned}
$$
\end{itemize}
The iteration of the above two cases yields
$$
P(x_\pi(t),t)-m_P\le \rho\text{ for all }t\ge t_0+N\varepsilon,
$$
which completes the proof of Theorem~\ref{thm_step1Pxt}.
\qed

\section{Proof of Theorem~\ref{thm_step1stab}} \label{proof_step1stab}

For an arbitrary $x^0\in D$, define $\mathcal L^{P,P(x^0)}=\{x\in\mathbb R^n:P(x)\le P(x^0)\}$. From the condition~\ref{thm_step1stab}.1),
$$
\mathcal L^{P,P(x^0)} \subseteq \{x\in\mathbb R^n:\|x-x^*\|\le w_{11}^{-1}(P(x^0))\}\subset D.
$$
Let $\widetilde D$ be an arbitrary convex compact set such that
$$
\{x\in\mathbb R^n:\|x-x^*\|\le w_{11}^{-1}(P(x^0))\}\subset \widetilde D\subseteq D.
$$
All the  assumptions of Theorem~\ref{thm_step1Px} are satisfied, so that we immediately have the following properties: for any $\gamma>0$ there exists an $\bar\varepsilon:\mathbb R_{>0}\to\mathbb R_{>0}$ such that, for all $t_0\ge 0$,  $x^0\in \widetilde D$, $t_0\ge0$ and $\varepsilon\in(0,\tilde\varepsilon(\gamma)]$, the $\pi_\varepsilon$-solution $x_\pi(t)$ of system~\eqref{Sigma_x} with the initial data $x_\pi(t_0)=x^0$ and the controls $u_k=u_k^\varepsilon(a(x,t),t)$ given by~\eqref{cont}--\eqref{dithers} are well-defined in $\widetilde D$ for all $t\ge t_0$, and
$$
\lim\limits_{t\to\infty}(P(x_\pi(t))-m_P)=0.
$$
As $\|x_\pi(t)-x^*\|\le w_{11}^{-1}\big(P(x_\pi(t))-m_P\big)$, this also implies
$$
\lim\limits_{t\to\infty}\|x_\pi(t)-x^*\|=0.
$$
Thus, the point $x^*$ is attractive for system~\eqref{Sigma_x}.

Let us prove that $x^*$ is stable. Assume that $\gamma$ and $\bar\varepsilon(\gamma)$ are fixed, $\gamma\bar\varepsilon(\gamma)\le 1$.
For an arbitrary $t\ge t_0$, denote the integer part of $\frac{t-t_0}{\varepsilon}$ as $N=\Big[\frac{t-t_0}{\varepsilon}\Big]$.
From~\eqref{xx0},
$$
\begin{aligned}
  \|x_\pi(t)-x^0\|&\le \sqrt{\varepsilon\gamma{M_F}\|\nabla P(x_\pi(t_0+N\varepsilon))\|} e^{L_{fx}\varepsilon}c_uM_f\\
  &\le \sqrt{{M_F}\|\nabla P(x_\pi(t_0+N\varepsilon))\|} e^{L_{fx}\bar\varepsilon}c_uM_f.
\end{aligned}
$$
Using the triangle inequality and condition~\ref{thm_step1stab}.2), we get
\begin{equation}\label{star}
  \|x_\pi(t)-x^*\|\le \|x_\pi(t_0+N\varepsilon)-x^*\|+ \sqrt{{M_F}w_2\big(\|x_\pi(t_0+N\varepsilon)-x^*\|\big)} e^{L_{fx}\bar\varepsilon}c_uM_f.
\end{equation}
Furthermore, from the proofs of Lemma~\ref{lem_step1} and Theorem~\ref{thm_step1Pxt} it follows that
$$
P(x_\pi(t_0+N\varepsilon)\le P(x^0),
$$
i.e.
\begin{equation}\label{2star}
\|x_\pi(t_0+N\varepsilon)-x^*\|\le w_{11}^{-1}\big(P(x^0)-m_P\big)\le w_{11}^{-1}\big(w_{12}(\|x^0-x^*\|)\big).
\end{equation}
Combining~\eqref{star} and~\eqref{2star} we conclude that, given an arbitrary $\epsilon>0$, one can choose a $\delta>0$ satisfying
$$
 w_{11}^{-1}\big(w_{12}(\delta)\big)+ \sqrt{{M_F}w_2\big(w_{11}^{-1}\big(w_{12}(\delta)\big)\big)} e^{L_{fx}\bar\varepsilon}c_uM_f\le \epsilon,
$$
so that
$$
 \|x_\pi(t)-x^*\|\le \epsilon\text{ for any }t\ge t_0, x^0\in \overline{B_\delta(x^*)}.
$$
\qed

\bibliographystyle{spmpsci}      

\bibliography{biblio_nonh}

\section*{Declarations}
\subsection*{Funding}
This work was partially supported by the DFG (German Research Foundation) under grants GR~5293/1-1 and ZU~359/2-1.
\subsection*{Competing interests}
The authors have no relevant financial or non-financial interests to disclose.
\subsection*{Author contributions}
The authors contributed equally to this work. The authors have read and approved the final manuscript.
\subsection*{Data availability}
The datasets generated during the current study are available from the corresponding author on reasonable request.

\end{document}